\apptocmd{\sloppy}{\hbadness 10000\relax}{}{}
\newcommand{\comment}[1]{} 
\newcommand{\Z}{\mathbb{Z}}
\newcommand{\N}{\mathbb{N}}
\newcommand{\F}{\mathbb{F}}
\newcommand{\cC}{\mathcal C}
\newcommand{\cD}{\mathcal D}
\newcommand{\cE}{\mathcal E}
\newcommand{\cK}{\mathcal K}
\newcommand{\cL}{\mathcal L}
\newcommand{\cP}{\mathcal P}
\newcommand{\cR}{\mathcal R}
\newcommand{\cS}{\mathcal S}
\newcommand{\cT}{\mathcal T}
\newcommand{\cU}{\mathcal U}
\newcommand{\cV}{\mathcal V}
\newcommand{\topdf}{\texorpdfstring}
\newcommand{\BF}{\mathrm{BF}}
\newcommand{\xto}{\xrightarrow}
\newcommand{\iso}{\overset{\sim}{\longrightarrow}}
\DeclareMathOperator{\ad}{ad}
\DeclareMathOperator{\ab}{ab}
\DeclareMathOperator{\reg}{reg}
\DeclareMathOperator{\coker}{coker}
\DeclareMathOperator{\id}{id}
\DeclareMathOperator{\im}{im}
\DeclareMathOperator{\supp}{supp}
\DeclareMathOperator{\inc}{inc}
\DeclareMathOperator{\can}{can}
\DeclareMathOperator{\ev}{ev}
\DeclareMathOperator{\sink}{sink}
\DeclareMathOperator{\sour}{sour}
\DeclareMathOperator{\triv}{triv}
\DeclareMathOperator{\flip}{flip}
\DeclareMathOperator{\Idem}{Idem}
\def\gr{\operatorname{gr}}
\newcommand{\kkgr}{kk^{\gr}}
\newcommand{\cat}[1]{\mathsf{#1}}
\newcommand{\Alg}{\cat{Alg}_\ell}
\newcommand{\gBF}{\BF_{\gr}}
\newcommand{\elmat}{\varepsilon}
\newcommand{\grAlg}{\mathsf{Alg}^{\gr}}
\newcommand{\eps}{\epsilon}
\newcommand{\gc}{\widehat}
\newcommand{\isum}{\boxplus}
\definecolor{thmcol}{RGB}{186, 86, 4}
\definecolor{citecol}{RGB}{186, 86, 4}
\definecolor{linkcol}{RGB}{186, 86, 4}
\definecolor{urlcol}{RGB}{186, 86, 4}
\numberwithin{equation}{section}
\theoremstyle{plain}
\newtheorem*{thm*}{Theorem}
\newtheorem{thm}[equation]{Theorem}
\newtheorem{lem}[equation]{Lemma}
\newtheorem{coro}[equation]{Corollary}
\newtheorem{prop}[equation]{Proposition}
\newtheorem{conv}[equation]{Convention}
\theoremstyle{definition}
\newtheorem{defn}[equation]{Definition}
\newtheorem{conj}[equation]{Conjecture}
\newtheorem{ex}[equation]{Example}
\theoremstyle{remark}
\newtheorem{rmk}[equation]{Remark}
\newtheorem*{ack}{Acknowledgements}
\title{Graded homotopy classification of Leavitt path algebras}
\author[Guido Arnone]{Guido Arnone}
\email{garnone@dm.uba.ar}
\address{Dep. Matem\'atica-IMAS\\ Facultad de Ciencias Exactas y Naturales\\
Universidad de Buenos Aires\\ Argentina}
\thanks{Supported by grant UBACyT 256BA from Universidad de Buenos Aires, PIP 2021-2023 GI 11220200100423CO from CONICET, PICT 2017-1395 from Agencia Nacional de Promoci\'on Cient\'\i fica y T\'ecnica, and by a PhD fellowship from CONICET}
\begin{document}

\begin{abstract}
We show that the graded Grothendieck group classifies  unital Leavitt path algebras of primitive graphs up to graded homotopy equivalence. 
To this end, we further develop classification 
techniques for Leavitt path algebras by means of (graded, bivariant) algebraic $K$-theory. 
\end{abstract}

\maketitle

\section{Introduction}

Let $\ell$ be a commutative unital ring. Given a 
graph $E$, we will consider its Leavitt path 
$\ell$-algebra $L(E)$ \cite{conmlpa}*{Definition 2.5}, which carries a natural grading
over $\Z$ (\cite{conmlpa}*{Proposition 4.7}).
Its graded Grothendieck group $K_0^{\gr}(L(E))$ is the group 
completion of the monoid of isomorphism classes of 
$\Z$-graded finitely generated projective modules. This group carries an action
from the infinite cyclic group $C_\infty = \langle \sigma\rangle$,
and is moreover a pointed preordered $\Z[\sigma]$-module; see 
Section \ref{subsec:ppom} for a precise defintion of these terms.
This paper is mainly concerned with the graded 
classification question for Leavitt path algebras:

\begin{conj}[\cite{hazrat}*{Conjecture 1}] \label{conj-hazrat} Assume that $\ell$ is a field. Let
$E$ and $F$ be two finite graphs.
If there exists a pointed, preordered $\Z[\sigma]$-module
isomorphism $K_0^{\gr}(L(E)) \iso K_0^{\gr}(L(F))$,
then the algebras $L(E)$ and $L(F)$ are isomorphic 
as graded algebras.
\end{conj}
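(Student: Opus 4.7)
The plan is to mirror the Kirchberg--Phillips classification strategy from $C^*$-algebras, with graded bivariant algebraic $K$-theory $\kkgr$ playing the role of equivariant $KK$. First, I would establish a ``graded universal coefficient theorem'' in $\kkgr$: for a suitable bootstrap class containing the Leavitt path algebras of finite graphs, I would show that the Hom-groups $\kkgr(L(E), L(F))$ are controlled by morphisms of pointed preordered $\Z[\sigma]$-modules $K_0^{\gr}(L(E)) \to K_0^{\gr}(L(F))$. Combined with the hypothesis of the conjecture, this would upgrade the abstract $K$-theoretic isomorphism to an invertible class $\xi \in \kkgr(L(E), L(F))$ preserving the order unit $[L(E)]$ and the positive cone.

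The next step is to reduce to a well-understood subclass by graded graph moves. I would proceed by induction along the filtration of $L(E)$ by graded ideals indexed by hereditary saturated subsets of $E^0$, so that each subquotient is the Leavitt path algebra of a smaller and in some sense simpler graph. At the level of graphs, I would use graded invariance of source elimination, in- and out-splitting, and a graded analogue of the Cuntz splice to put $E$ and $F$ into a normal form compatible with the $K$-theoretic data; known invariance results at the graded level can be combined with the inductive hypothesis to reduce the general statement to the primitive case treated elsewhere in the paper.

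The heart of the argument, and the main obstacle, is lifting $\xi$ to an honest graded algebra isomorphism $L(E) \cong L(F)$. My plan would be to adapt the algebraic analogue of Kirchberg's absorption theorem: first represent $\xi$ by a graded homomorphism after stabilising by $M_\infty$, then exploit the purely infinite structure of the relevant Leavitt path algebras together with the unital hypothesis in order to desuspend, producing an isomorphism of the unstabilised algebras. This desuspension is the crux. In the operator-algebraic setting this step rests on the functional calculus, approximate unitary equivalence, and the deep structural theory of purely infinite $C^*$-algebras; in the purely algebraic setting none of these are available, so I would expect to need either a Matui--R\o rdam-style argument using the graded Steinberg/groupoid model of $L(E)$, or a genuinely new technique exploiting the rigidity of the Cuntz--Krieger relations to force a chosen graded $*$-homomorphism in the class $\xi$ to be surjective.

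Finally, to cover graphs whose Leavitt path algebras are not purely infinite simple, for instance those with sinks or with non-trivial acyclic parts, I would dovetail the induction above with a direct treatment of the finite-dimensional and acyclic layers, where the graded $K_0$ together with the $\Z[\sigma]$-action already pins down the algebra via classical Morita theory of graded semisimple algebras. Combining these ingredients, the hope is that the conjecture follows, with the desuspension step of paragraph three as the decisive difficulty.
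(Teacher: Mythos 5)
The statement you have been asked to prove is Conjecture \ref{conj-hazrat}, and the first thing to say is that the paper does \emph{not} prove it: it is stated as an open conjecture, and the paper's main result (Theorem \ref{thm:htpy-clasif}) is strictly weaker, namely that for \emph{primitive} graphs a pointed preordered $\Z[\sigma]$-module isomorphism $K_0^{\gr}(L(E)) \cong K_0^{\gr}(L(F))$ yields a graded \emph{homotopy equivalence}, not an isomorphism. Indeed the paper closes by observing that, in the primitive case, Conjecture \ref{conj-hazrat} is equivalent to the statement that unitally graded homotopy equivalent Leavitt path algebras are graded isomorphic. Your proposal correctly locates the decisive difficulty in exactly the same place --- upgrading an invertible class $\xi \in \kkgr(L(E),L(F))$ (or a homotopy equivalence) to an honest graded isomorphism --- but then explicitly concedes that no technique for this step exists, so what you have written is a research programme rather than a proof. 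That is an accurate reflection of the state of the art, but it means the proposal contains a genuine, acknowledged gap and cannot be accepted as a proof of the conjecture.

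Beyond the missing desuspension step, two ingredients of your reduction would fail concretely. First, there is no ``graded analogue of the Cuntz splice'' available as an invariance move: the Cuntz splice changes the graded Grothendieck group, and the entire point of the graded conjecture is that $K_0^{\gr}$ is expected to be \emph{finer} than the ungraded invariant precisely because it detects such moves; building a normal form on graded Cuntz-splice invariance would therefore destroy the invariant you are trying to use. Second, the appeal to purely infinite simple structure and Kirchberg-type absorption is misplaced in this setting: for the graphs relevant here $L(E)$ is strongly graded with $L(E)_0$ \emph{ultramatricial} (this is what the paper actually exploits, via Propositions \ref{prop:k1-ultra} and \ref{prop:ultra-pi0} and the corner skew Laurent structure), and the algebras covered by the conjecture need not be purely infinite or simple at all. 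Relatedly, your first step overstates what the UCT gives: by Theorem \ref{thm:uct}, $\kkgr(L(E),L(F))$ is an extension of $\hom_{\Z[\sigma]}(\gBF(E),K_0^{\gr}(L(F)))$ by $\gBF^\vee(E)\otimes_{\Z[\sigma]}K_1^{\gr}(L(F))$, so hom-sets are \emph{not} controlled by $K_0^{\gr}$-morphisms alone; the $K_1^{\gr}$-part is exactly the obstruction the paper must analyze (via $\ad$-homotopies and the deformations $\phi_z$) even to reach the homotopy statement. The one part of your plan that is sound and matches the literature is lifting the order-unit-preserving isomorphism to an invertible $\kkgr$-class, which is \cite{arcor}*{Theorem 13.1}.
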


In line with recent advances in the (ungraded) classification
question for purely infinite simple Leavitt path algebras 
\cites{kklpa1, kklpa2, kkhlpa}, we investigate the notion of 
graded classification up to polynomial homotopy. Before
stating our main result, we recall the relevant definitions and provide some motivation.
An elementary (graded, polynomial) homotopy between graded algebra homomorpshisms
$f,g \colon A \to B$ is
a graded homomorphism $h \colon A \to B[t]$
such that  $\ev_0 \circ \ h = f$ and $\ev_1 \circ \ h = g$;
here the indeterminate $t$ is set to be homogeneous of degree zero.
Two graded algebra maps are homotopic if they are connected
via finitely many elementary homotopies. 
Homotopy equivalences are then defined to be homomorphisms which
have an inverse up to this notion of homotopy.

A positive answer to Conjecture \ref{conj-hazrat} would
imply that two Leavitt path algebras 
are graded isomorphic if and only if they are
graded homotopy equivalent (see Remark \ref{rmk:htpy-khgr}).
One could thus first aim at deciding whether 
the graded Grothendieck group classifies 
Leavitt path algebras up to homotopy equivalence. 
In the ungraded, purely infinite simple case this program was carried out in 
\cite{kklpa2} and \cite{kkhlpa}.
The main theorem of this paper provides a similar result in the graded context:

\begin{thm}[Theorem \ref{thm:htpy-clasif}] \label{thm:main}  
Let $E$ and $F$ be two finite, primitive graphs. Assume that $\ell$ is a field.
If there exists a pointed, preordered $\Z[\sigma]$-module
isomorphism $K_0^{\gr}(L(E)) \iso K_0^{\gr}(L(F))$,
then the algebras $L(E)$ and $L(F)$ are graded homotopy
equivalent.
\end{thm}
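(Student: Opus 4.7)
The plan is to adapt the bivariant $K$-theoretic classification strategy from the ungraded purely infinite simple case \cites{kklpa2, kkhlpa} to the graded setting, with graded bivariant algebraic $K$-theory $kk^{\gr}$ as the central organizing tool. Throughout, I would exploit that $K_0^{\gr}(L(E))$ is a Bowen--Franks-type invariant computed from the adjacency matrix of $E$, and that $L(E)$ fits into a distinguished triangle in $kk^{\gr}$ encoding this presentation as the cone of the morphism determined by $I - N_E^T$.

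The first step is to promote the given pointed, preordered $\Z[\sigma]$-module isomorphism $\phi \colon K_0^{\gr}(L(E)) \iso K_0^{\gr}(L(F))$ to an isomorphism $\xi \in kk^{\gr}(L(E), L(F))$. A graded universal coefficient theorem is the natural route: apply $kk^{\gr}(-, L(F))$ to the distinguished triangle presenting $L(E)$, compute $kk^{\gr}$ on the free building blocks (shifted copies of the ground ring) to identify the result with $\Z[\sigma]$-module homomorphisms into $K_0^{\gr}(L(F))$, and extract a surjection onto preordered $\Z[\sigma]$-module maps through which $\phi$ can be lifted. That the lift is in fact a $kk^{\gr}$-isomorphism will follow formally from a five-lemma comparison between the triangles for $E$ and $F$.

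The second step is to represent $\xi$ by an honest graded algebra homomorphism $f \colon L(E) \to L(F)$, possibly after passing to suitable (graded) stabilizations. Here the primitivity hypothesis should enter: for a primitive graph the unit of $L(F)$ decomposes into enough mutually orthogonal, properly infinite, graded projections to absorb arbitrary $kk^{\gr}$-classes, and combined with a graded Cuntz-sum construction that is compatible with the $C_\infty$-action this should manufacture the desired representable homomorphism. The third step would then be a graded Whitehead-type argument: since $L(E)$ and $L(F)$ are compact in $kk^{\gr}$, an algebra map realizing a $kk^{\gr}$-isomorphism between them must already be a graded homotopy equivalence.

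The main obstacle I expect is the second step, namely passing from an abstract $kk^{\gr}$-class to an honest graded algebra map. The ungraded arguments rely on strong absorption properties of purely infinite simple algebras, and their graded analogues must respect the $C_\infty$-action on $K_0^{\gr}$; it is precisely in verifying this equivariant absorption that the primitivity hypothesis should do the essential work, since primitivity is weaker than graded simplicity and the classical Cuntz-type constructions do not transplant directly. The first and third steps should be more routine adaptations of the existing ungraded framework, though both will require care in tracking graded shifts and the action of the generator $\sigma$.
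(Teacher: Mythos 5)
Your overall skeleton (lift the $K_0^{\gr}$-isomorphism through a graded UCT to a $\kkgr$-isomorphism, realize it by an algebra map, conclude a homotopy equivalence) matches the paper's, and step 1 is essentially what the paper does (it quotes the lifting result of Arnone--Corti\~nas, Theorem 13.1 of \cite{arcor}, obtained from the UCT triangle $\ell^{\reg(E)} \to \ell^{E^0}\to L(E)$). However, the mechanisms you propose for steps 2 and 3 would not work in the graded setting, and this is where the real content of the paper lies.

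For step 2, your plan to ``absorb arbitrary $kk^{\gr}$-classes'' using mutually orthogonal properly infinite graded projections and a graded Cuntz-sum transplants the purely infinite simple technology, but the graded picture is the opposite of purely infinite: for a regular graph $F$ the algebra $L(F)$ is strongly graded with $L(F)_0$ \emph{ultramatricial}, hence stably finite, and $K_0^{\gr}(L(F))$ carries a nontrivial order structure with order unit. There are no properly infinite graded idempotents to absorb with, and in fact not every $\kkgr$-class is representable --- only those whose $\ev$-image is a pointed preordered module map (this is why the paper works with $\kkgr(L(E),L(F))_1$). The paper's actual route is: use the fullness of $K_0^{\gr}$ on Leavitt path algebras (\cite{lift}, \cite{vas}) to produce a unital graded $\phi$ with $K_0^{\gr}(\phi)=\ev(\xi)$, so that $\overline j(\phi)-\xi$ lies in $\im(d) \cong \gBF^\vee(E)\otimes_{\Z[\sigma]}K_1^{\gr}(L(F))$, and then \emph{deform} $\phi$ by a tuple of units $z_e\in \phi(ee^\ast)L(F)_0\phi(ee^\ast)$ to absorb that error term (Lemmas \ref{lem:jf=dx+xi} and \ref{lem:jf+u=jfu}). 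Primitivity enters not through infiniteness but through Proposition \ref{prop:idem-primi}: it guarantees the idempotents $ee^\ast$ are full in the ultramatricial algebra $L(E)_0$, so the corner units surject onto $K_1^{\gr}(L(F))^{E^1}$ and the deformation reaches every class in $\im(d)$.

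For step 3, there is no Whitehead-type theorem available: $\kkgr$ is not the homotopy category of a model structure on graded algebras in which $L(E)$, $L(F)$ are cofibrant-fibrant, so $j(f)$ being invertible does not formally make $f$ a homotopy equivalence. What is needed, and what the paper proves, is the \emph{injectivity} of $\overline j$ up to $\ad$- (equivalently $M_2$-) homotopy (Theorem \ref{thm:jeq-ad-m2}): one realizes $\xi$ and $\xi^{-1}$ by unital maps $f,g$, deduces $j(fg)=j(\id)$ and $j(gf)=j(\id)$, and then shows $fg$ and $gf$ are homotopic to inner automorphisms $\ad(u)$, $\ad(v)$. This injectivity is itself nontrivial: it requires identifying $K_1$ of an ultramatricial algebra with $\pi_0$ of its unit group (Propositions \ref{prop:k1-ultra} and \ref{prop:ultra-pi0}), computing the shift action on $K_1^{\gr}$ via the corner skew Laurent polynomial structure (Corollary \ref{coro:d-shift-d=alpha}), and a diagram chase through the UCT (Lemma \ref{lem:ad-lemma}). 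None of this is supplied by a compactness argument, so as written your step 3 is a genuine gap.
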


We point out that despite the
resemblance of the theorem above with ungraded homotopy classification results, 
significant technical work is needed to obtain  
similar conclusions.
The hypothesis that graphs be \emph{primitive} (see Definition \ref{defn:primitive}) stems from adapting the techniques of \cite{kkhlpa}; put succinctly, we need a family of idempotents of $L(E)$ arising from edges to be full (Proposition \ref{prop:idem-primi}). 
Such graphs are in particular \emph{essential}, meaning that they have no sinks or sources. This allows us to interpret their associated Leavitt path algebras
as corner skew Laurent polynomial rings. The latter 
are characterized as the $\Z$-graded rings with a homogeneous left invertible element of degree $1$ (\cite{skew}*{Lemma 2.4}). These ideas are due to Ara and Pardo and go back to \cite{towards}.

The main tool used in this article is graded bivariant algebraic $K$-theory (\cite{kkg}). This is a functor $j \colon \grAlg \to \kkgr$ from the category 
of graded $\ell$-algebras to a triangulated category which universal in a specific sense; see Section \ref{sec:kkgr} for a brief recollection of its main properties. Here $\ell$
is viewed as a graded algebra with trivial grading. The functor 
$j$ is the identity on objects; hence we shall omit it from our notation. 
Writing $[L(E),L(F)]$ for the set of graded algebra homormophisms between two
Leavitt path algebras up to homotopy, our objective will be to understand 
the canonical map
\begin{equation}\label{canmap}
[L(E), L(F)] \to \kkgr(L(E), L(F)).
\end{equation}
In \cite{arcor}*{Corollary 11.11}, it is shown that homomorphisms between two Leavitt 
path algebras $L(E)$ and $L(F)$ in $\kkgr$ fit into an exact sequence involving 
their graded $K$-theory groups. Concretely, let $A_E$ be the adjacency matrix
of $E$ and $I$ the identity matrix on its set of vertices. Writing  
$\gBF(E) = \coker(I-\sigma A_E^t)$ for the \emph{Bowen-Franks module} of $E$ and 
$\gBF^\vee(E) = \coker(I^t-\sigma A_E)$ for its so-called dual,
there is a diagram of $\Z[\sigma]$-modules with exact top-row as follows:
\[
\begin{small}
\begin{tikzcd}[column sep = small]
    0 \arrow{r} & \gBF^\vee(E) \otimes_{\Z[\sigma]} K_1^{\gr}(L(F)) \arrow{r}{d} & \kkgr(L(E), L(F)) \arrow{r}{}
    & \hom_{\Z[\sigma]}(\gBF(E), K^{\gr}_0(L(F))) \arrow{r} & 0\\
    & & \text{$[L(E), L(F)]$} \arrow{u}{\overline j} \arrow[bend right = 30]{ru}{K^{\gr}_0} & &
\end{tikzcd}    
\end{small}
\]

The first part of this article is devoted 
to Poincaré duality for Leavitt path algebras,
which is used in \cite{kkhlpa}*{Lemma 13.1} to effectively 
compute the map $d$ in the exact sequence above.
Let us recall the relevant terminology. In this 
paper, a graph is a pair of source and range functions $s, r \colon E^1 \to E^0$
from a set of edges $E^1$ to a set of vertices $E^0$. The dual graph $E_t$
of $E$ has the same sets of vertices and edges with the functions $r$ and $s$ 
interchanged one for the other; informally, we revert the direction of all arrows.
The suspension in the triangulated structure of $\kkgr$ is represented
by tensoring by the trivially graded algebra $\Omega = t(1-t)\ell[t]$; 
we shall write $\Omega L(E_t)$ for $\Omega \otimes_\ell L(E_t)$.
\begin{thm}[Theorem \ref{thm:poinc}] If $E$ is a finite essential graph,
then the functor $- \otimes_\ell L(E)$ is 
left adjoint to $- \otimes \Omega L(E_t)$ as endofuntors 
of the graded bivariant $K$-theory category $\kkgr$. Thus,
for each pair of graded algebras $R$ and $S$ there are isomorphisms
\[
\kkgr(R \otimes_\ell L(E), S) \cong \kkgr(R,S\otimes_\ell \Omega L(E_t))
\]
which are natural in both $R$ and $S$.
\end{thm}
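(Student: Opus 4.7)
The plan is to exhibit an internal duality in $\kkgr$: a unit $\eta_E \colon \ell \to L(E) \otimes_\ell \Omega L(E_t)$ and a counit $\epsilon_E \colon \Omega L(E_t) \otimes_\ell L(E) \to \ell$ satisfying the two zigzag identities. Since tensoring with a fixed graded algebra is an exact, coproduct-preserving endofunctor of the triangulated category $\kkgr$, such an internal adjunction immediately yields the claimed natural isomorphism
\[
\kkgr(R \otimes_\ell L(E), S) \cong \kkgr(R, S \otimes_\ell \Omega L(E_t))
\]
for arbitrary $R$ and $S$, with naturality inherited from that of $\eta_E$ and $\epsilon_E$.

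To construct $\eta_E$ and $\epsilon_E$, I would exploit essentiality: $E_t$ is then also essential, and by \cite{skew} both $L(E)$ and $L(E_t)$ admit corner skew Laurent polynomial presentations. Each such presentation yields a fundamental graded extension whose connecting map in $\kkgr$ realises the class $I - \sigma A_{(-)}^t$ on vertex idempotents. Adapting the ungraded Poincaré duality construction of \cite{kkhlpa} to the graded context, I would build an auxiliary graded algebra $\mathcal E$ pairing a generating family of partial isometries of $L(E)$ with those of $L(E_t)$ compatibly with the source--range swap $E \leftrightarrow E_t$. The connecting map of the resulting graded extension
\[
0 \to L(E) \otimes_\ell L(E_t) \otimes_\ell \Omega \to \mathcal E \to \ell \to 0
\]
(and of its mirror image) then provides the candidate $\epsilon_E$, respectively $\eta_E$, with the $\Omega$-factor placing the connecting morphism in the correct homological degree.

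To verify the zigzag identity $(\id_{L(E)} \otimes \epsilon_E) \circ (\eta_E \otimes \id_{L(E)}) = \id_{L(E)}$ in $\kkgr(L(E), L(E))$, I would use the fundamental triangle
\[
\ell^{E^0} \xrightarrow{I - \sigma A_E^t} \ell^{E^0} \to L(E) \to \Omega \ell^{E^0}
\]
underlying the exact sequence displayed in the introduction. Pulling the composition back along this triangle reduces the identity to an equality of maps out of $\ell^{E^0}$, where it becomes a matrix computation involving $A_E$ and $A_{E_t} = A_E^t$. The symmetric identity for $\Omega L(E_t)$ is handled analogously, using the fundamental triangle for $L(E_t)$.

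The main obstacle is constructing $\mathcal E$ correctly and verifying that its connecting morphism matches $\sigma A_E^t$ on the $L(E)$-side with $\sigma A_E$ on the $L(E_t)$-side; this pairing is the algebraic content of the source--range duality between $E$ and $E_t$, and must be tracked carefully through the $\Z$-grading in a way that is not needed in the ungraded case of \cite{kkhlpa}. Finiteness and essentiality of $E$ are used precisely to guarantee that the vertex idempotents form a finite full family of generators, which is what makes the reduction to a matrix identity both finite-dimensional and sufficient.
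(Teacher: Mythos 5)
Your high-level plan (duality data $\eta_E$, $\epsilon_E$ adapted from the ungraded argument of \cite{kkhlpa}, then deduce the hom-set bijection formally) is the same shape as the paper's proof, but two of your steps have genuine gaps. First, the construction of the duality morphisms. In the paper the counit is \emph{not} the connecting map of an extension with quotient $\ell$: it is the composite $\Omega L(E_t)\otimes L(E)\xto{\Omega\rho}\Omega\Sigma_X^{\gr}\xto{\partial_X^{\gr}}\ell$, where $\rho$ is an explicit homomorphism sending edges of $E$ and of $E_t$ to shift-type partial isometries indexed by the graded set of paths $X=\cP_{\ge1}\sqcup\{\bullet\}$, and $\partial_X^{\gr}$ is the (inverted) boundary of the \emph{graded} cone extension $M_X^{\gr}\to\Gamma_X^{\gr}\to\Sigma_X^{\gr}$. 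Making sense of this requires the whole apparatus of graded infinite sets, graded infinite-sum $\ast$-algebras and the vanishing of $\Gamma_X^{\gr}$ in $\kkgr$ (Section \ref{sec:inf-sum}); this is precisely the grading bookkeeping you flag as "the main obstacle" but do not resolve. The unit, on the other hand, does not come from an extension at all: it is the class $\xi_{u_1}$ of the explicit degree-zero unit $u_1=1-p+\sum_e e\otimes e_t^\ast$ of $L(E)\otimes L(E_t)$, converted into a morphism $\ell\to L(E)\otimes\Omega L(E_t)$ via Theorem \ref{thm:rep-units}. Your proposed extension $L(E)\otimes L(E_t)\otimes\Omega\to\mathcal E\to\ell$ has connecting map living in $\kkgr(\Omega,\Omega\otimes L(E)\otimes L(E_t))\cong\kkgr(\ell,L(E)\otimes L(E_t))$, which is off by one suspension from both $\kkgr(\ell,L(E)\otimes\Omega L(E_t))$ and $\kkgr(\Omega L(E_t)\otimes L(E),\ell)$; the extra $\Omega$ does not land the morphism in the correct degree as you assert.

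Second, and more seriously, your verification step fails as stated. Checking the zigzag composite $L(E)\to L(E)$ after precomposition with $\ell^{E^0}\to L(E)$ does \emph{not} determine it: by the UCT (Theorem \ref{thm:uct}) the restriction map $\kkgr(L(E),R)\to\hom_{\Z[\sigma]}(\gBF(E),KH_0^{\gr}(R))$ has kernel $\gBF^\vee(E)\otimes_{\Z[\sigma]}KH_1^{\gr}(R)$, which is nonzero in general, so agreement on the vertex idempotents only pins the composite down modulo the image of $d$. The paper sidesteps this: it does not prove the triangle identities at all, but only that the two zigzag composites are \emph{isomorphisms} (which suffices for the two hom-set maps to be mutually inverse bijections), and it does so by completing the map $\zeta\colon\cS\otimes L(E_t)\to\Sigma_X^{\gr}\otimes L(E_t)$ to a morphism of distinguished triangles built from the Cohn extension of $L(E_t)$, with isomorphisms $\Upsilon_1,\Upsilon_2$ on the outer corners. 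The real technical content is the identification of those outer maps, which rests on computing boundary maps of graded cone extensions (Lemma \ref{lem:boun-inc}, Proposition \ref{prop:sLx}) and comparing classes of units via Proposition \ref{prop:gr-index} --- not on a matrix identity in $A_E$ and $A_E^t$. Your argument could be repaired by replacing "check on $\ell^{E^0}$, conclude equality" with "fit into a morphism of triangles, conclude isomorphism", but as written the reduction is invalid.
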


The proof of Poincaré duality given in \cite{kkhlpa} involves a specific 
homomorphism from $L(E)$ to the suspension algebra $\Sigma_X$ for a suitable set $X$. The latter algebra is a quotient of Karoubi's cone $\Gamma_X$ by the ideal of $X$-indexed matrices $M_X$.
In our context, we want this homomorphism to preserve
the gradings; in particular, we have to equip $\Sigma_X$
with a grading to begin with. Further, in ungraded algebraic bivariant $K$-theory tensoring by $\Sigma_X$ plays the role of the inverse for the suspension functor.
In Section \ref{sec:inf-sum} we generalize the notion of infinity sum-rings and, for a suitable notion of graded infinite set $X$, we define a graded analogue $\Gamma_X^{\gr}$ of Karoubi's cone and produce a quotient algebra $\Sigma_X^{\gr}$ which plays the role of the suspension in $\kkgr$. 

Another key ingredient of the proof of Poincaré duality is the 
relationship between a degree zero unit element $u_1 \in L(E) \otimes L(E_t)$ and the class in $\kkgr$ represented by the algebra homomorphism from $\cS := \ker(\ell[t, t^{-1}] \xto{\ev_1} \ell)$ to $L(E) \otimes L(E_t)$ mapping $t \mapsto u_1$. 
In this direction, we prove the following:
\begin{thm}[Theorem \ref{thm:rep-grunits}]
Let $A$ be a unital, strongly graded algebra, $p \in A_0$
an idempotent and $u$ a unit in $pA_0p$. Consider the map 
$\phi \colon \cS \to A$ given by $1 \mapsto p$, $t \mapsto u$.
There is a chain of isomorphisms
\[
\kkgr(\cS, A) \cong kk(\cS, A_0) \simeq KH_1(A_0),
\]
mapping $j(\phi)$ to $[1-p+u]$.
\end{thm}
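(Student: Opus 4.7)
The plan is to split the computation into two isomorphisms and then trace $j(\phi)$ through each of them. The first isomorphism uses the strongly graded hypothesis to reduce $\kkgr(\cS, A)$ to $kk(\cS, A_0)$, and the second is the standard identification of $kk(\cS, -)$ with $KH_1$. Under the composition, one computes the image of $j(\phi)$ directly.

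For the first step, the key input---presumably a bivariant manifestation of Dade's theorem---is that for a trivially graded algebra $R$ (such as $\cS$) and a strongly graded algebra $A$, the natural map $\kkgr(R, A) \to kk(R, A_0)$ induced by restriction to degree zero is an isomorphism. This is the main technical point, and one natural route is to compare Cuntz--Quillen style models for both sides using the graded Morita equivalence between $\gr(A)$ and $\mathsf{Mod}(A_0)$ provided by strong gradedness. I expect this step to be the main obstacle; once available, the remaining identifications are formal.

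For the second step, $\cS$ represents $KH_1$ in Cort\'inas--Thom bivariant $K$-theory: a non-unital homomorphism $\psi \colon \cS \to B$ (with $B$ unital) is determined by its unital extension $\psi^+ \colon \cS^+ = \ell[t, t^{-1}] \to B$, and hence by the unit $\psi^+(t) \in B^\times$; the corresponding class in $kk(\cS, B) \cong KH_1(B)$ is $[\psi^+(t)]$. To track $j(\phi)$, note that since $p, u \in A_0$ the map $\phi$ factors as $\cS \xto{\phi_0} A_0 \xto{\inc} A$, so the first isomorphism sends $j(\phi)$ to $j(\phi_0)$. The unital extension $\phi_0^+ \colon \ell[t,t^{-1}] \to A_0$ is forced to send $t$ to $1 - p + u$, since $\phi_0(1-t) = p - u = 1 - (1-p+u)$; one checks that $1 - p + u$ is a unit in $A_0$ with inverse $1 - p + u^{-1}$, where $u^{-1}$ denotes the inverse of $u$ in $pA_0p$. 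Applying the second isomorphism, $j(\phi)$ is therefore mapped to $[1-p+u] \in KH_1(A_0)$, as required.
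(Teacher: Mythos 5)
Your proposal is correct and follows essentially the same route as the paper: the first isomorphism is the bivariant form of Dade's theorem for a trivially graded source and strongly graded target (which the paper cites rather than reproves), sending $j(\phi)$ to the class of its corestriction $\phi_0 \colon \cS \to A_0$, and the second is the identification $kk(\cS,-)\simeq KH_1$ under which $j(\nu_{p,u})\mapsto[1-p+u]$ (the paper's Theorem \ref{thm:rep-units}, whose content you invoke as the ``standard'' representability of $KH_1$ by $\cS$). Your explicit observation that the unital extension of $\phi_0$ must send $t\mapsto 1-p+u$, with inverse $1-p+u^{-1}$, is a clean way to reduce the general idempotent $p$ to the case $p=1$ and matches the computation the paper performs inside its proof of Theorem \ref{thm:rep-units}.
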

Here $KH$ is Weibel's homotopy $K$-theory \cite{kh}. Finally, 
in Section \ref{sec:kkgr} we also record some
results that we use regarding 
left and right boundary maps of a triangle in $\kkgr$
and their compatibility with tensor products. 
We remark that these statements are not found in the literature, even in 
the ungraded case.

With a graded version of Poincaré duality in place, and the fact that $K_0^{\gr}$ is a full functor
when restricted to Leavitt path algebras (\cites{lift, vas}), we are 
able to study the image of \eqref{canmap}
(Lemmas \ref{lem:jf=dx+xi} and \ref{lem:jf+u=jfu}). This relies on a procedure to 
deform a unital graded algebra homomorphism $L(E) \to L(F)$ 
using a given element of $K_1^{\gr}(L(F))$, adapted from the ungraded setting in
Section \ref{sec:kkgr-to-alg}. Studying the extent to which
\eqref{canmap} is injective necessitates a study of $K_1^{\gr}(L(F))$. By a result of Hazrat \cite{gradedstr}*{Theorem 3.15} 
together with Dade's theorem \cite{dade}*{Theorem 2.8}, 
if $F$ is a graph with no sinks then one has canonical isomorphisms 
$K_*^{\gr}(L(F)) \cong K_\ast(L(F)_0)$. Further, 
it is known that $L(F)_0$ is an ultramatricial algebra \cite{amp}*{proof of Theorem 5.3}. Using these results, 
in \cite{towards}*{Lemma 3.6} Ara and Pardo give an explicit 
desctription of the shift action on 
$K_0(L(F)_0) \cong K_0^{\gr}(L(F))$. We 
extend the characterization above to $K_1(R_0) \cong K_1^{\gr}(R)$ for any strongly graded, corner skew Laurent polynomial ring $R$ such that $R_0$ is ultramatricial.
\begin{thm}[Corollary \ref{coro:d-shift-d=alpha}] \label{thm:main-shift=alpha}
Let $(R, t_+, t_-)$ be a strongly graded, corner skew Laurent polynomial ring. Assume that $\ell$ is a field and $R_0$ a unital ultramatricial algebra.
Writing $\alpha \colon R_0 \to R_0$ for the homomorphism given by 
$x \mapsto t_+ x t_-$, 
the following diagram is commutative:
\[
\begin{tikzcd}
    K_1^{\gr}(R) \arrow{r}{\sigma} & K_1^{\gr}(R)\\
    K_1(R_0) \arrow{u}{\sim} \arrow{r}{K_1(\alpha)} & \arrow{u}[right]{\sim} K_1(R_0)
\end{tikzcd}
\]
\end{thm}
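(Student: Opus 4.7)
The plan is to use Dade's theorem to transport $\sigma$ to a map on $K_1(R_0)$, and then identify it with $K_1(\alpha)$ via an explicit description of the $R_0$-bimodule $R_1$. Since $R$ is strongly graded, Dade's theorem yields an equivalence between the category of graded $R$-modules and that of $R_0$-modules, given by $M \mapsto M_0$ with quasi-inverse $N \mapsto R \otimes_{R_0} N$. Under this equivalence the shift $(-)(1)$ corresponds to the autoequivalence $R_1 \otimes_{R_0} -$, since $(R\otimes_{R_0} N)(1)_0 = R_1 \otimes_{R_0} N$. Consequently, the vertical isomorphism $K_1(R_0) \iso K_1^{\gr}(R)$ intertwines $\sigma$ with the endomorphism of $K_1(R_0)$ induced by tensoring with the $R_0$-bimodule $R_1$ on the left.

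Next, I would pin down $R_1$ explicitly. Setting $e := t_+ t_- \in R_0$, the equality $R_1 = R_0 t_+$ (coming from $r = r t_- t_+$ for $r \in R_1$) gives a left $R_0$-linear isomorphism $R_0 e \iso R_1$, $ae \mapsto a t_+$, with inverse $r \mapsto r t_-$. The key identity $t_+ b = t_+ b (t_- t_+) = \alpha(b)\, t_+$ shows that the right $R_0$-action on $R_1$ translates via this iso into the twisted action $(ae) \cdot b = a\,\alpha(b)$ on $R_0 e$. Strong grading forces $e$ to be a full idempotent, since $R_0 = R_1 R_{-1} = R_0 t_+ t_- R_0 = R_0 e R_0$; in particular the inclusion $eR_0 e \hookrightarrow R_0$ induces an isomorphism on $K_1$ by Morita equivalence.

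With this description in hand, the comparison reduces to a direct check. A class in $K_1(R_0)$ represented by $\psi \in GL_n(R_0)$ corresponds under $R_1 \otimes_{R_0} -$ to the automorphism of $R_1^n \cong (R_0 e)^n$ given by right multiplication by $\alpha(\psi) \in GL_n(eR_0 e)$. Extending by the identity on the complement $(R_0(1-e))^n$ recovers right multiplication on $R_0^n$ by the invertible matrix $\alpha(\psi) + I_n - E_n$, where $E_n = \mathrm{diag}(e, \dots, e)$; this is precisely the standard representative of $K_1(\alpha)[\psi]$ obtained from the factorization $R_0 \xrightarrow{\sim} e R_0 e \hookrightarrow R_0$ of $\alpha$.

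The main obstacle is to upgrade the check from individual generators to all of $K_1(R_0)$. The ultramatricial hypothesis is expected to intervene here: for $R_0$ ultramatricial over a field $\ell$, $K_1(R_0)$ is the filtered colimit of unit groups of finite matricial subalgebras, and naturality of the Dade equivalence with respect to these finite stages reduces the verification to the commutativity of the diagram on each such stage, where the explicit matrix computation above applies.
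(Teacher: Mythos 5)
Your argument is correct and rests on the same two pillars as the paper's proof: the Dade equivalence transports $\sigma$ to the functor $R_1\otimes_{R_0}-$, and the bimodule $R_1$ is identified with $R_0e$ carrying the right action twisted by $\alpha$ via the identity $t_+b=\alpha(b)t_+$. The executions differ, though, in where the $K_1$ computation is done. The paper first invokes Proposition \ref{prop:k1-ultra} to represent every class of $K_1(R_0)$ by a single unit $u\in R_0^\times$ acting on the rank-one free module, traces $(R_0,u)\mapsto(R,u)\mapsto(R[+1],u)\mapsto(R_1,u)$ through Dade, the shift, and Dade's inverse, and then proves the rank-one identity $[(R_1,u)]=[(R_0,1-p+\alpha(u))]$ by a short exact sequence of modules-with-automorphism (Theorem \ref{thm:cslp-action}). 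You instead compute the effect of $R_1\otimes_{R_0}-$ on an arbitrary pair $(R_0^n,\Psi)$ with $\Psi\in GL_n(R_0)$, obtaining $[\alpha(\Psi)+I_n-E_n]=K_1(\alpha)[\Psi]$ with $E_n=\operatorname{diag}(e,\dots,e)$. This buys you something: your verification of commutativity does not use the ultramatricial hypothesis at all, because for \emph{any} unital ring every class in $K_1$ is the class of an invertible matrix, i.e.\ of an automorphism of a free module, and an exact functor $F$ sends $[(P,\theta)]$ to $[(FP,F\theta)]$. For the same reason your final paragraph addresses a non-problem: there is no gap between ``generators'' and ``all of $K_1(R_0)$'' to bridge, so the proposed reduction to finite matricial stages is unnecessary. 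The paper needs ultramatriciality only because its Theorem \ref{thm:cslp-action} is a rank-one statement; your matrix-level computation subsumes it.
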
 
The proof involves the observation that $K_1(R_0)$ agrees with the abelianization of the unit group of $R_0$, recorded as Proposition \ref{prop:k1-ultra}. 
We also need an alternative description of the $K_1$ of an ultramatricial algebra,
akin to Cortiñas and Montero's characterization of Karoubi and Villamayor's $KV_1$ group
for purely infinite simple rings (\cite{kklpa2}*{Proposition 2.8}). 
\begin{thm}[Proposition \ref{prop:ultra-pi0}]
\label{thm:intro-kv1}
Assume that $\ell$ is a field. If $R$ is a unital ultramatricial algebra, 
then \[K_1(R) = R^\times/\{u(1) : u \in (R[t])^\times, u(0)=1\}.\]
\end{thm}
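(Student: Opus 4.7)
The plan is to establish a natural isomorphism $R^\times/H \cong K_1(R)$, where $H := \{u(1) : u \in (R[t])^\times,\ u(0)=1\}$, by reducing to the matricial stages of the ultramatricial presentation of $R$. First I would write $R = \varinjlim_\alpha R_\alpha$ with each $R_\alpha = \prod_j M_{n_{\alpha,j}}(\ell)$ matricial and with injective unital connecting maps. A polynomial has finitely many coefficients, and the equations $u(0)=1$ and $u \cdot u^{-1}=1$ descend to sufficiently large finite stages by injectivity of the transition maps; this yields $R^\times = \bigcup_\alpha R_\alpha^\times$, $(R[t])^\times = \bigcup_\alpha (R_\alpha[t])^\times$, and $H = \bigcup_\alpha H_\alpha$, where $H_\alpha$ is the analogous subgroup of $R_\alpha^\times$. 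Since $K_1$ preserves filtered colimits, it suffices to verify $R_\alpha^\times/H_\alpha \cong K_1(R_\alpha)$ at each stage.

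At each stage I would use the determinant. By Morita invariance, $K_1(R_\alpha) \cong \prod_j K_1(M_{n_{\alpha,j}}(\ell)) \cong \prod_j \ell^\times$, and the natural map $R_\alpha^\times \to K_1(R_\alpha)$ induced by $R_\alpha^\times \hookrightarrow GL(R_\alpha)$ agrees with the componentwise determinant. This map is surjective (take diagonal elements) with kernel $\prod_j SL_{n_{\alpha,j}}(\ell)$, and I then have to show this kernel coincides with $H_\alpha$. For $u \in (R_\alpha[t])^\times$, the determinant in each factor lies in $\ell[t]^\times = \ell^\times$ (which is where $\ell$ being a field is used), so $u(0) = 1$ forces this determinant to be constantly $1$, yielding $u(1) \in \prod_j SL_{n_{\alpha,j}}(\ell)$. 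Conversely, over a field every element of $SL_n(\ell)$ is a product of elementary transvections $e_{ij}(\lambda)$, and each such is the evaluation at $t=1$ of the polynomial unit $e_{ij}(\lambda t)$ (which equals the identity at $t=0$); products of such lifts exhibit every element of $\prod_j SL_{n_{\alpha,j}}(\ell)$ as an element of $H_\alpha$.

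Combining both inclusions and passing to the filtered colimit then yields the claimed isomorphism. The main technical obstacle is the colimit reduction in the first step: one must carefully justify that units in $R$ and in $R[t]$ descend together with their defining relations to some finite stage $R_\alpha$, which hinges on the injectivity of the transition maps $R_\alpha \hookrightarrow R$. The rest of the argument is standard: the determinant over a field, Gaussian elimination for $SL_n(\ell)$, and Morita invariance of $K_1$ (available here because ultramatricial algebras are von Neumann regular, hence $K$-regular).
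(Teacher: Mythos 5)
Your proof is correct and follows essentially the same route as the paper: reduce to the matricial stages $\prod_j M_{n_j}(\ell)$ via filtered colimits and finite products, then identify both $\ker\bigl(R_\alpha^\times \to K_1(R_\alpha)\bigr)$ and $H_\alpha$ with $\prod_j SL_{n_j}(\ell)$ using that $\det(u)\in\ell[t]^\times=\ell^\times$ is constant for $u \in GL_n(\ell[t])$ and that transvections lift to $e_{ij}(\lambda t)$. Two minor remarks: by working directly with the kernel of the determinant rather than routing through the abelianization $GL_n(\ell)_{\ab}$, you avoid the separate treatment of $\ell = \F_2$ that the paper carries out (there $[GL_n,GL_n]\neq SL_n$ but the statement still holds since both sides are trivial), and the appeal to $K$-regularity/von Neumann regularity is unnecessary, as $K_1(M_n(A))\cong K_1(A)$ for unital $A$ is plain Morita invariance.
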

 
Theorem \ref{thm:intro-kv1} allows us to deduce injectivity of \eqref{canmap} up to 
a relaxed notion of homotopy. We say that two graded algebra maps 
$f,g \colon A \to B$
are \emph{ad-homotopic} if 
there exists a degree zero unit $u \in B_0^\times$ such that $f$ is homotopic to the conjugation of $g$ by $u$. A particular case of Theorem \ref{thm:jeq-ad-m2} is the following:
\begin{thm}
Let $E$ and $F$ be two primitive
graphs. Assume that $\ell$ is a field.
Two unital graded homomorphisms $f, g \colon L(E) \to L(F)$ satisfy $j(f) = j(g)$ 
if an only if they are ad-homotopic.
\end{thm}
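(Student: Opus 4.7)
For the ``if'' direction, suppose $f$ is polynomially homotopic to $\ad(u)\circ g$ for some $u\in L(F)_0^{\times}$. Homotopy invariance of $j$ gives $j(f)=j(\ad(u)\circ g)$. By the rotation trick, $\mathrm{diag}(u,u^{-1})$ is polynomially homotopic to the identity in $\mathrm{GL}_2(L(F)_0[t])$, so $M_2$-stability of $\kkgr$ yields $j(\ad(u))=j(\mathrm{id}_{L(F)})$, whence $j(\ad(u)\circ g)=j(g)$.

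For the converse, assume $j(f)=j(g)$. Composing $j$ with the surjection of the exact sequence \cite{arcor}*{Corollary 11.11} recovers the functor $K_0^{\gr}$, so $K_0^{\gr}(f)=K_0^{\gr}(g)$. Using Lemma~\ref{lem:jf=dx+xi} together with the $K_0^{\gr}$-fullness of Leavitt path algebras from \cites{lift, vas}, the plan is to decompose each of $j(f),j(g)$ as $d(x_{\bullet})+\xi(K_0^{\gr}(\bullet))$, with $\xi$ depending only on the $K_0^{\gr}$-datum. Injectivity of $d$ in the exact sequence, together with $j(f)=j(g)$, then forces $x_f=x_g$ in $\gBF^{\vee}(E)\otimes_{\Z[\sigma]}K_1^{\gr}(L(F))$.

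Next I would invoke Lemma~\ref{lem:jf+u=jfu}, which asserts that conjugating a unital homomorphism by a unit $u\in L(F)_0^{\times}$ shifts its $\kkgr$-class exactly by $d$ applied to the $K_1^{\gr}$-class of $u$. To turn this formal bookkeeping into an actual unit, we use that $L(F)_0$ is ultramatricial and combine the canonical identification $K_1^{\gr}(L(F))\cong K_1(L(F)_0)$ coming from \cite{gradedstr} and Dade's theorem with Proposition~\ref{prop:k1-ultra} and Theorem~\ref{thm:intro-kv1}: every class in $K_1(L(F)_0)$ is represented by an element of $L(F)_0^{\times}$ modulo polynomial paths starting at $1$. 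This produces a unit $u\in L(F)_0^{\times}$ and, through the absorbing mechanism of Lemma~\ref{lem:jf+u=jfu}, a polynomial homotopy $f\sim\ad(u)\circ g$.

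The hard step will be the reduction in the second paragraph: we must match the $K_0^{\gr}$-data of $f$ and $g$ up to ad-homotopy while keeping precise control over the induced element of $\gBF^{\vee}(E)\otimes_{\Z[\sigma]}K_1^{\gr}(L(F))$. The lifts produced by $K_0^{\gr}$-fullness are defined only up to $\kkgr$-equivalence, so the residual ambiguity has to be packaged as a single class in $K_1^{\gr}(L(F))$ representable by a degree-zero unit; otherwise a class in $\gBF^{\vee}(E)\otimes_{\Z[\sigma]}K_1^{\gr}(L(F))$ would survive which no conjugation by a single unit of $L(F)_0$ could kill.
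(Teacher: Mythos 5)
Your ``if'' direction is fine and is essentially the paper's (Lemma \ref{lem:ad->m2} via Proposition \ref{prop:ad}). The converse has a genuine gap, located exactly where you flag ``the hard step''. First, the decomposition you propose does not get off the ground: Lemma \ref{lem:jf=dx+xi} writes an arbitrary class $\xi$ as $\overline{j}(\phi)+d(x)$ for \emph{some non-canonical} lift $\phi$ of $\ev(\xi)$, so there is no ``$\xi$ depending only on the $K_0^{\gr}$-datum'', and ``injectivity of $d$ forces $x_f=x_g$'' is not meaningful because the classes $x_\bullet$ depend on the chosen lifts, which have no stated relation to $f$ and $g$. The input actually needed is the rigidity statement \cite{lift}*{Corollary 3.5}: since $K_0^{\gr}(f)=K_0^{\gr}(g)$, there is a single degree-zero unit $u$ such that $\ad(u)\circ f$ and $g$ agree on the diagonal subalgebra $D(E)_1=\mathrm{span}_\ell\{v,ee^\ast\}$. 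Only after this reduction can one write $g=f_z$ for the explicit corner units $z_e=g(e)f(e^\ast)\in f(ee^\ast)L(F)_0f(ee^\ast)$; this is the step that converts the $K$-theoretic hypothesis into algebra, and it is where primitivity (fullness of the idempotents $ee^\ast$, Proposition \ref{prop:idem-primi}) enters.

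Second, you misread Lemma \ref{lem:jf+u=jfu}: it concerns the edge-twist $\phi_z$ by a tuple of corner units, not conjugation, and conjugation by a unit never changes the $\kkgr$-class (otherwise your own ``if'' direction would fail). So ``the absorbing mechanism of Lemma \ref{lem:jf+u=jfu}'' cannot by itself produce a conjugating unit; that lemma only yields $d(U([z]))=j(g)-j(f)=0$. The missing mechanism is Lemma \ref{lem:ad-lemma}: since $d$ is injective and $d\circ U$ is computed by $I-\sigma A_E$, the vanishing of $d(U([z]))$ forces $[z]$ to lie in the image of $1-\lambda$ for the endomorphism $\lambda(a)=\sum_e\phi(e)a\phi(e^\ast)$ of $R_\phi$ (this uses the comparison of $\lambda$ with $\sigma B$ via Lemma \ref{lem:action-s(e)=r(f)} and the exact sequence of Theorem \ref{thm:uct}). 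Writing $[z]=[\nu\lambda(\nu)^{-1}]$, one checks $\phi_{\nu\lambda(\nu)^{-1}}=\ad(v)\circ\phi$ for an explicit unit $v$, and Lemma \ref{lem:v~u=>phiv~phiu} --- which is where Proposition \ref{prop:ultra-pi0} and ultramatriciality are really used --- upgrades equality of classes in $(R_\phi)^\times_{\ab}$ to a polynomial homotopy $\phi_z\sim\phi_{\nu\lambda(\nu)^{-1}}$. Without these two steps your outline records the correct invariants but never produces the unit realizing the ad-homotopy.
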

With all of this in place, we
prove Theorem \ref{thm:main} in Section \ref{sec:clasi} as Theorem \ref{thm:htpy-clasif}.

\begin{ack}
I would like to thank Guillermo
Cortiñas for his careful reading of 
this manuscript as well as his
valuable suggestions.
Thanks also to Pere Ara and Enrique
Pardo for useful discussions
on the topics of Section \ref{subsec:ultra}.
\end{ack}

\section{Preliminaries}

For the rest of the article, we fix an abelian group $G$
and a commutative unital ring  $\ell$. 
The adjective graded will always mean $G$-graded. Recall that 
a \emph{graded algebra} is an $\ell$-algebra $R$ 
together with an abelian group descomposition $R = \bigoplus_{g \in G} R_g$ satisfying $R_g R_h \subset R_{gh}$ for each $g,h \in G$. 
The projection of $x \in R$ to $R_g$ will be denoted $x_g$.
If $x \in R_g$, then we say that $x$ is 
a \emph{homogeneous element of degree $g$} and write $|x| = g$. 
The base ring $\ell$ is viewed as a graded algebra by 
equipping it with the trivial grading, that is, we set $|\lambda| = 1_G$ for all $\lambda \in \ell$. A graded algebra homomorphism
$f\colon R \to S$ is an algebra homomorphism satisfying $f(R_g) \subset S_g$ for each $g \in G$. We shall denote the category of graded algebras by $\grAlg$. 

\subsection{Tensor products} Given two graded algebras $R$ and $S$, their tensor product has a canonical grading given 
by 
\[
(R\otimes_\ell S)_{d} = \bigoplus_{gh = d} R_g \otimes S_h.
\]

We equip the ring $\ell[t]$ of 
polynomials with the trivial grading, and set $R[t] := R \otimes_\ell \ell[t]$ with the tensor product grading
for any graded algebra $R$. In other words, we set $|t| = 1_G$.

We will often omit the tensor product symbol and use juxtaposition in its place, especially when one of the algebras carries the trivial grading. For example, we define 
the \emph{path} and \emph{loop} algebras respectively as
\[
P = \ker(\ell[t] \xto{\ev_1} \ell), \qquad \Omega = \ker(P \xto{\ev_0} \ell)
\]
and write 
\[
PR = P \otimes_\ell R, \qquad \Omega R = \Omega \otimes_\ell R
\]
for any graded algebra $R$.

\subsection{Graded sets, matricial stability and \topdf{$G$}{G}-stability} 

A \emph{graded set} will mean a pair $(X,d)$ where $X$
is a set and $d \colon X \to G$ is a function. When 
understood from context, we shall write $|\cdot|$ 
instead of $d$. An element $x \in X$
is said to have \emph{degree} $d(x) \in G$, and 
the degree $g \in G$ component of $X$ is $X_g := d^{-1}(g)$.
A morphism of graded sets $f \colon (X,d) \to (Y,d')$
is a function $f \colon X \to Y$ such that $d' \circ f = d$. If $X$ is a set, we write $|X|$
for the graded set given by the constant function with value $1_G$.

From a graded set $X$ one can form a graded algebra of 
\emph{$X$-indexed matrices}, denoted by $M_X$. As an algebra, it is the free $\ell$-module with basis $\{\elmat_{x,y} : x,y \in X\}$ with product $\elmat_{x,y} \cdot \elmat_{w,z} = \delta_{y,w} \elmat_{x,z}$. Its grading is induced by the assignment $|\elmat_{x,y}| = |x||y|^{-1}$. If $R$ is a graded algebra, we set $M_X R := M_X \otimes_\ell R$.

\begin{defn} We put $M_\infty$ for $M_{|\N|}$ and $M_n = M_{|\{1, \ldots, n\}|}$
for each $n \in \N$.
\end{defn}

Any morphism $f \colon X \to Y$ of graded sets with injective underlying function gives rise to a graded algebra map
$Mf \colon M_X \to M_Y$ mapping $\elmat_{x,y}$ to $\elmat_{f(x), f(y)}$.

\begin{defn}
Let $F \colon \grAlg \to \cat{C}$ be a functor. 
We say that $F$ is \emph{matricially stable}
if for every pair of sets $X, Y$ of cardinality
less or equal than $\beth = \max\{\aleph_0, |G|\}$ and any graded algebra $R$, 
the functor $F$ maps the inclusion $M_{|X|}R \to M_{|X \sqcup Y|}R$
to an isomorphism. If $F$ moreover maps 
inclusions $M_X R \to M_{X \sqcup Y}R$ to isomorphisms
for every pair of graded sets of cardinality less or equal than $\beth$, we say that it is \emph{$G$-stable}. 
\end{defn}

The following proposition is implied by \cite{tesigui}*{Proposición 3.3.8}.

\begin{prop}\label{prop:incx=incy} Let $X$ be a graded set
and $A$ a graded algebra. If
$x \in X$, then 
\[
\iota_x \colon a \mapsto \elmat_{x,x} \otimes a \in M_X A  
\]
is a graded homomorphism. Moreover, if $y \in X$ is such that $d(x) = d(y)$, then any $G$-stable functor $F$ satisfies $F(\iota_x) = F(\iota_y)$.
\qed
\end{prop}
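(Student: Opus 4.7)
The first claim is a direct verification. Since $|\elmat_{x,x}| = d(x) d(x)^{-1} = 1_G$, the element $\elmat_{x,x}$ is homogeneous of degree $1_G$, so $|\elmat_{x,x} \otimes a| = |a|$ and $\iota_x$ is a morphism of graded $\ell$-modules. Multiplicativity follows from $\elmat_{x,x}^2 = \elmat_{x,x}$ together with the fact that $\iota_x$ commutes with the tensor structure.

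For the second claim, assume $x \neq y$ (otherwise there is nothing to prove). The plan is to exhibit $\iota_y$ as the composition of $\iota_x$ with an inner automorphism implemented by a degree-$1_G$ permutation, and then to invoke the standard stability trick to conclude that this inner automorphism becomes the identity after applying $F$. First I would reduce to a finite graded subset $X_0 \subseteq X$ containing both $x$ and $y$: choosing any finite graded subset $X_0$ through $\{x, y\}$, the maps $\iota_x$ and $\iota_y$ factor through the graded inclusion $\iota_{X_0} \colon M_{X_0} A \hookrightarrow M_X A$, which is a $G$-stability isomorphism after $F$. In the unital algebra $M_{X_0} A$, the hypothesis $d(x) = d(y)$ ensures that the transposition
\[
\tau = \elmat_{x,y} + \elmat_{y,x} + \sum_{z \in X_0 \setminus \{x,y\}} \elmat_{z,z}
\]
is a homogeneous unit of degree $1_G$ with $\tau^{-1} = \tau$, and a direct computation gives $\tau \elmat_{x,x} \tau^{-1} = \elmat_{y,y}$. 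Hence, writing $\iota_x^0, \iota_y^0 \colon A \to M_{X_0} A$ for the corner embeddings, one has $\iota_y^0 = \operatorname{Ad}_\tau \circ \iota_x^0$.

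The key step is then to prove that $F(\operatorname{Ad}_\tau) = F(\operatorname{id}_{M_{X_0} A})$. This is the standard Eilenberg-swindle-type argument for matricial stability: consider the graded set $X_0 \sqcup X_0$ and the two graded block inclusions $\alpha_1, \alpha_2 \colon X_0 \hookrightarrow X_0 \sqcup X_0$. Both induce inclusions $M_X A \hookrightarrow M_{X_0 \sqcup X_0} A$ that are $G$-stability isomorphisms after $F$, and the degree-$1_G$ automorphism of $X_0 \sqcup X_0$ swapping the two copies intertwines $\alpha_1$ and $\alpha_2$. Comparing $F(\alpha_1)$, $F(\alpha_2)$, and the block-diagonal permutation $\operatorname{diag}(\tau, \tau^{-1})$, which lies in the image of a permutation graded set automorphism of $X_0 \sqcup X_0$, allows one to conclude that $F(\operatorname{Ad}_\tau)$ equals the identity. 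This step is the main technical obstacle, and is exactly the content of \cite{tesigui}*{Proposición 3.3.8}; the new input compared to the classical ungraded statement is that all automorphisms and units involved must preserve the grading, which is precisely what $d(x) = d(y)$ guarantees.

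Chaining things together, one obtains
\[
F(\iota_y) = F(\iota_{X_0}) \circ F(\iota_y^0) = F(\iota_{X_0}) \circ F(\operatorname{Ad}_\tau) \circ F(\iota_x^0) = F(\iota_{X_0}) \circ F(\iota_x^0) = F(\iota_x),
\]
completing the proof.
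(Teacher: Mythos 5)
Your overall strategy is the right one, and it is essentially the only one available: verify the first claim directly, realize $\iota_y$ as $\operatorname{Ad}_\tau\circ\iota_x$ for the transposition $\tau$ (which is homogeneous of degree $1_G$ precisely because $d(x)=d(y)$), and then show that a $G$-stable $F$ kills this conjugation. The paper itself gives no proof, deferring to \cite{tesigui}*{Proposición 3.3.8}, but the corrected version of your argument is exactly the one the paper runs for Proposition \ref{prop:ad}(i), so the approach is certainly the intended one. The first claim and the factorization through $M_{X_0}A$ are fine (note only that the last chain of equalities uses mere functoriality, so you never actually need $F(\iota_{X_0})$ to be invertible).

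The gap is in the key cancellation step. Conjugation by $V=\operatorname{diag}(\tau,\tau^{-1})$ satisfies $\operatorname{Ad}_V\circ\alpha_i=\alpha_i\circ\operatorname{Ad}_{\tau^{\pm 1}}$ on \emph{both} blocks, so comparing $F(\alpha_1)$, $F(\alpha_2)$ and $F(\operatorname{Ad}_V)$ only yields relations of the form $F(\operatorname{Ad}_V)F(\alpha_i)=F(\alpha_i)F(\operatorname{Ad}_\tau)$, from which nothing cancels; $\operatorname{diag}(\tau,\tau^{-1})$ is the matrix one uses in the Whitehead-lemma/homotopy argument, which is unavailable here since $F$ is only assumed $G$-stable, not homotopy invariant. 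The conjugator you want is $U=\operatorname{diag}(\tau,1)$, which is still the permutation matrix of a degree-preserving automorphism of $X_0\sqcup X_0$. Then $\operatorname{Ad}_U\circ\alpha_2=\alpha_2$ forces $F(\operatorname{Ad}_U)=\operatorname{id}$ because $F(\alpha_2)$ is invertible ($\alpha_2$ is the first-block inclusion composed with the swap isomorphism, so $G$-stability applies), and $\operatorname{Ad}_U\circ\alpha_1=\alpha_1\circ\operatorname{Ad}_\tau$ then gives $F(\operatorname{Ad}_\tau)=\operatorname{id}$ by cancelling the invertible $F(\alpha_1)$; this is verbatim the cancellation in the paper's proof of Proposition \ref{prop:ad}(i). (In fact the doubling is not needed: taking $u=v=\elmat_{y,x}+\elmat_{x,y}$ between the corners $\elmat_{x,x}M_{X_0}A\elmat_{x,x}$ and $\elmat_{y,y}M_{X_0}A\elmat_{y,y}$ puts you directly in the situation of Definition \ref{def:ad}.) Finally, beware that $M_{X_0}A$ is unital only when $A$ is, and the paper's algebras need not be; take $\tau$ in $M_{X_0}=M_{X_0}\ell$ acting by left and right multiplication, or pass to a unitalization, which changes nothing in the argument.
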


Unless specified otherwise, we view
$G$ as a graded set via $\id_G \colon G \to G$. 
If $A$ is a graded algebra, we write $\iota_A$ for $\iota_{1_G} \colon A \to M_G A$.

\subsection{Extensions} An \emph{extension}
of graded algebras is an exact sequence
\[
K \xto{i} E \xto{p} Q
\]
such that
$p = \coker(i)$, $i = \ker(p)$, and 
$p$ admits a linear section $s \colon Q \to E$.

\begin{rmk} The section in the definition
of extension is not required to preserve the 
grading. This is justified by the fact that the existence of an $\ell$-linear section $s$ guarantees the existence of a section
\[
\hat s(m) = \sum_{d \in G} s(m_d)_d.
\]
which preserves the grading.
\end{rmk}

\begin{ex} \label{ex:pathext}
The \emph{loop extension} of a graded algebra
$R$ is 
\[
\Omega R \hookrightarrow PR \xto{\ev_1} R.
\]    
\end{ex}

\begin{ex} \label{ex:conext}
Let 
\begin{small}
\begin{equation}\label{eq:karcon}
\Gamma  = \{f \colon \N \times \N \to \ell : |\im f| < 
\infty \text{ and } (\exists N \geq 1) \text{ s.t. } |\supp(x,-)|, |\supp(-,x)| 
\leq N (\forall x \in X)\}
\end{equation}    
\end{small}
be Karoubi's cone ring, equipped with pointwise addition and the convolution product. We view $\Gamma$ as a graded
algebra via the trivial grading. Observe that it contains $M_\infty$ as an ideal; put $\Sigma = \Gamma/M_\infty$ for the suspension ring. The extension
\[
M_\infty R \to \Gamma R \to \Sigma R
\]
is the cone extension of $R$ (\cite{kk}*{Section 4.7}). We will deal with generalizations of 
this extension in Section \ref{sec:inf-sum}.
\end{ex}

\subsection{Homotopy invariance} 

In this paper we will only consider graded notions of algebraic
homotopy. This justifies dropping the adjective ``graded''
from all of the following definitions.

An \emph{elementary (polynomial) homotopy}
between graded algebra homomorphisms $f,g \colon A \to B$ is a
graded map $h \colon A \to B[t]$ such that $\ev_0 \circ h = f$, $\ev_1 \circ h = g$. 
We say that $f$ and $g$ are \emph{homotopic}
if there exists a sequence of elementary homotopies $h_1, \ldots h_n \colon A \to B[t]$
such that $\ev_0 \circ h_1 = f$, $\ev_1 \circ h_n = g$ and $\ev_1 \circ h_j = \ev_0 \circ h_{j+1}$ for all $j$. This is 
an equivalence relation which will be denoted $\sim$. 
Two graded (unital) algebras $R$ and $S$ are \emph{(unitally) 
homotopy equivalent} if there exist (unital) maps $f \colon R \to S$
and $g \colon S \to R$ such that $fg \sim 1_S$ and $gf \sim 1_R$.

\begin{defn} We say that a functor $F \colon \grAlg\to\cat{C}$
is homotopy invariant if it maps the inclusion $A \subset A[t]$
for each graded algebra $A$ to an isomorphism. Equivalently, 
a functor is homotopy invariant if $f \sim g$ implies $F(f) = F(g)$.
\end{defn}

We will also need two more notions of homotopy. 
The first one involves matricial stabilization. We say that 
two graded homomorphisms $f,g \colon A \to B$
are $M_2$-homotopic if the maps $\iota_1 \circ f, \iota_1 \circ g \colon A \to M_2 B$ are homotopic. As above, 
this also induces an equivalence relation denoted by~$\sim_{M_2}$. Likewise, we have a notion of \emph{$M_2$-homotopy equivalence}; we say that two algebras are \emph{$M_2$-homotopy equivalent} if there exists an $M_2$-homotopy equivalence between them.
For the second notion we first need a definition.

\begin{defn}\label{def:ad}
Let $C$ be a graded unital algebra and $A,B \subset C$ two subalgebras with inclusion maps $\inc_A \colon A\to C$ and $\inc_B \colon B\to C$.
Given $u,v \in C$ two homogeneous elements such that $|u||v| = 1$, $avua' = aa'$ 
for each $a,a' \in A$ and $uAv\subset B$, we define the graded homomorphism
\[
\ad(u,v) \colon A \to B, \qquad a \mapsto uav.
\]
If $u$ is a unit, we abbreviate $\ad(u) := \ad(u,u^{-1})$.
\end{defn}

We say that two unital algebra maps $f,g \colon R \to S$
are \emph{$\ad$-homotopic}, denoted $f \sim_{\ad} g$ if there exists a unit $u \in S_{1_G}$ such that $\ad(u) \circ f \sim g$. When this happens, we will
write $f \sim_u g$. The proof of the 
following lemma is implied by Proposition \ref{prop:ad} below.

\begin{lem} \label{lem:ad->m2} If two unital graded
algebra maps are $\ad$-homotopic, they are $M_2$-homotopic.
\qed
\end{lem}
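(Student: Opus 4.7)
My plan is to reduce the claim to the classical Whitehead trick: inner conjugation by a homogeneous unit of degree $1_G$ becomes polynomially homotopic to the identity once one passes to $M_2$. Unpacking Definition~\ref{def:ad}, the hypothesis $f \sim_{\ad} g$ provides a unit $u \in S_{1_G}$ with $\ad(u) \circ f \sim g$. Since the relation $\sim$ is preserved under post-composition with graded maps, applying $\iota_1$ gives $\iota_1 \circ \ad(u) \circ f \sim \iota_1 \circ g$ in $M_2 S$. Hence it suffices to prove
\[
\iota_1 \circ h \sim \iota_1 \circ \ad(u) \circ h \quad \text{in } M_2 S
\]
for every unital graded homomorphism $h$ and every unit $u \in S_{1_G}$.

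For this, set $w := \elmat_{1,1}\otimes u + \elmat_{2,2}\otimes u^{-1} \in M_2 S$. This is a homogeneous unit of degree $1_G$ with inverse $\elmat_{1,1}\otimes u^{-1} + \elmat_{2,2}\otimes u$, and a direct computation gives $\iota_1(\ad(u)(x)) = w\cdot \iota_1(x)\cdot w^{-1}$ for every $x \in S$. The task then reduces to producing a polynomial path of units $V(t) \in (M_2 S[t])_{1_G}$ with $V(0) = I$ and $V(1) = w$: the assignment $a \mapsto V(t)\cdot \iota_1(h(a))\cdot V(t)^{-1}$ will then be a graded homomorphism $R \to M_2 S[t]$ witnessing the desired elementary homotopy.

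To construct such a $V(t)$, I would use the Whitehead-style factorization
\[
w = T_{12}(u)\,T_{21}(-u^{-1})\,T_{12}(u-1)\,T_{21}(1)\,T_{12}(-1),
\]
where $T_{ij}(\mu) := I + \mu\,\elmat_{i,j}$ denotes an elementary transvection for $i \neq j$ (this is verified by direct matrix multiplication). Replacing each scalar coefficient $\mu$ by $t\mu$ produces
\[
V(t) := T_{12}(tu)\,T_{21}(-tu^{-1})\,T_{12}(t(u-1))\,T_{21}(t)\,T_{12}(-t),
\]
which interpolates polynomially between $V(0) = I$ and $V(1) = w$. Each factor has polynomial inverse $T_{ij}(\mu)^{-1} = T_{ij}(-\mu)$, so $V(t)^{-1}$ is polynomial as well. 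Since $|u| = 1_G$, every entry of $V(t)$ lies in $S[t]_{1_G}$, so conjugation by $V(t)$ is a graded inner automorphism of $M_2 S[t]$.

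There is no substantial obstacle; the only point needing care is the homogeneity of $V(t)$, which rests precisely on $u$ belonging to the neutral component $S_{1_G}$ (this is why $\ad$-homotopy is defined using degree-$1_G$ units). Concatenating the resulting homotopy $\iota_1 \circ f \sim \iota_1 \circ \ad(u) \circ f$ with the previously obtained $\iota_1 \circ \ad(u) \circ f \sim \iota_1 \circ g$ produces $\iota_1 \circ f \sim \iota_1 \circ g$, i.e.\ $f \sim_{M_2} g$, as required.
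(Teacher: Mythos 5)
Your argument is correct and is essentially the paper's: the lemma is deduced there from Proposition~\ref{prop:ad}(ii), which shows $\ad(u)\sim_{M_2}\id$ by intertwining $\iota_1\circ\ad(u)$ with $\iota_2$ and invoking the graded homotopy $\iota_1\sim\iota_2$ of \cite{kklpa1}*{Lemma 2.1} --- the same Whitehead trick you carry out by hand. Your explicit transvection path $V(t)$ from the identity to $\elmat_{1,1}\otimes u+\elmat_{2,2}\otimes u^{-1}$ is a correct, self-contained rendering of that homotopy, and your homogeneity check (all entries of $V(t)$ lie in $(S[t])_{1_G}$ because $|u|=1_G$) is exactly the point the paper flags when it observes that for $|u|=1_G$ the grading on $M_2S$ is the standard one.
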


\begin{prop}\label{prop:ad} Let $C$ be a graded unital algebra and $A,B \subset C$ two subalgebras with inclusion maps $\inc_A \colon A\to C$ and $\inc_B \colon B\to C$.
Given $u,v \in C$ in the situation of Definition \ref{def:ad}, we have that:
\begin{itemize}
    \item[i)] any $G$-stable functor $F$ satisfies $F(\inc_B \ad(u,v)) = F(\inc_A)$;
    \item[ii)] if $B = A$, $uA, Av \subset A$, and $|u| = 1_G$, then $\ad(u,v) \sim_{M_2} \id_A$. 
\end{itemize}
In particular, by ii), any homotopy invariant, matricially stable functor $F$ satisfies $F(\ad(u,v)) = \id_{F(A)}$.
\end{prop}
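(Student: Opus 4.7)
The plan is to prove (i) and (ii) by the matrix-swap trick that underlies Proposition~\ref{prop:incx=incy}, and to derive the final assertion from (ii) via $M_2$-stability.

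For (i), I would pass to $M_Y C$ for the graded set $Y$ obtained from $\{x, y\}$ with $|x| = 1_G$ and $|y| = |u|^{-1}$ by adjoining a duplicate of each degree. Inside this algebra, the elements $\tilde u = \elmat_{y, x} \otimes u$ and $\tilde v = \elmat_{x, y} \otimes v$ are homogeneous of degree $1_G$ and satisfy $\tilde u \, \iota_x(a) \, \tilde v = \iota_y(uav)$, realizing the Morita context linking the two corners. Packaging $\inc_A$ and $\inc_B \circ \ad(u, v)$ into the single graded homomorphism $\phi \colon A \to M_Y C$, $\phi(a) = \iota_x(a) + \iota_y(uav)$, and then applying $G$-stability together with Proposition~\ref{prop:incx=incy} to the duplicated corners (following the strategy of \cite{tesigui}*{Proposici\'on 3.3.8}), we deduce $F(\inc_A) = F(\inc_B \ad(u, v))$.

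For (ii), the condition $|u| = 1_G$ (hence $|v| = 1_G$) places all the relevant data in the degree-zero component, so we can construct an explicit polynomial homotopy $h \colon A \to M_2 A[t]$ between $\iota_1$ and $\iota_1 \circ \ad(u, v)$. Following the classical Whitehead trick, I would set $h(a)(t) = W(t) \iota_1(a) W(t)^{-1}$ for a polynomial path $W(t) \in \mathrm{GL}_2(C[t])$ with $W(0) = I$, built from elementary unipotent matrices involving $u$ and $v$ together with the polynomial rotation $R(t) = \begin{pmatrix} 1-t^2 & -t \\ 2t-t^3 & 1-t^2 \end{pmatrix}$, which interpolates $I$ and $\begin{pmatrix} 0 & -1 \\ 1 & 0 \end{pmatrix}$. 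The closure conditions $uA, Av \subset A$ and the partial-inverse relation $avua' = aa'$ ensure that the conjugated matrix has entries in $A[t]$, keeping $h$ in $M_2 A[t]$. The final assertion then follows: (ii) gives $\iota_1 \circ \ad(u, v) \sim \iota_1 \circ \id_A$, so $F(\iota_1) F(\ad(u, v)) = F(\iota_1)$; inverting $F(\iota_1)$ (which is an isomorphism by $M_2$-stability) yields $F(\ad(u, v)) = \id_{F(A)}$.

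The main technical obstacle lies in (ii), where $u$ is not a genuine unit of $C$: the classical Whitehead expression of $\mathrm{diag}(u, u^{-1})$ as a product of elementary matrices and a rotation is unavailable. The substitute is an invertible matrix such as $\begin{pmatrix} u & 1 - uv \\ 1 & -v \end{pmatrix}$, whose explicit inverse $\begin{pmatrix} v & 1 - vu \\ 1 & -u \end{pmatrix}$ exists even though $u$ has no inverse, and whose correction entries $1 - uv$ and $1 - vu$ cancel after multiplication by elements of $A$ thanks to $avua' = aa'$. The delicate check is that the associated polynomial path of conjugations stays inside $M_2 A[t]$ and not merely $M_2 C[t]$.
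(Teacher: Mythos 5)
Part i) is where your argument has a genuine gap. Packaging the two maps into the single diagonal homomorphism $\phi(a) = \iota_x(a) + \iota_y(uav)$ does not let you recover an equality of its two ``components'' under an arbitrary $G$-stable functor $F$: such a functor takes values in an arbitrary category, so there is no additivity available to split $F(\phi)$, and the conjugation data $\tilde u = \elmat_{y,x}\otimes u$, $\tilde v = \elmat_{x,y}\otimes v$ at best reduces you to the identity $F(\iota_y)F(\inc_B\ad(u,v)) = F(\iota_x)F(\inc_A)$, which you cannot cancel because $|x| = 1_G$ while $|y| = |u|^{-1}$, and Proposition \ref{prop:incx=incy} identifies $F(\iota_x)$ with $F(\iota_y)$ only when the two indices have the \emph{same} degree. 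This is not a removable technicality: for $F = K_0^{\gr}$ the corner inclusions at indices of different degrees of a weighted matrix algebra differ by the shift $|x||y|^{-1}$, so no argument that ends by comparing the two corners $x$ and $y$ can close. The paper's proof avoids any such comparison: it defines a homomorphism on the whole matrix algebra, $\phi \colon M_2 A \to M_2 C$, $T \mapsto UTV$ with $U = \elmat_{1,1}u + \elmat_{2,2}1_C$ and $V = \elmat_{1,1}v + \elmat_{2,2}1_C$; since $\phi\iota_2^A = M_2(\inc_A)\iota_2^A$ and $F(\iota_2^A)$ is invertible, $G$-stability pins down $F(\phi) = F(M_2(\inc_A))$, and evaluating $\phi$ on the \emph{first} corner, where $\phi\iota_1^A = \iota_1^C\inc_B\ad(u,v)$, gives the claim after cancelling the single isomorphism $F(\iota_1^C)$. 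Each corner inclusion is only ever cancelled against itself. You need some version of this ``determine $F(\phi)$ on one corner, read it off on the other'' mechanism; the diagonal embedding alone will not produce it.

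For ii) your plan is workable in principle but you have deferred exactly the one verification that matters, namely that the polynomial conjugation path has entries in $M_2 A[t]$ rather than merely $M_2 C[t]$. The paper sidesteps this entirely: under the hypotheses $uA, Av\subset A$ the map $\phi$ above corestricts to an honest graded endomorphism $\psi$ of $M_2A$ satisfying $\psi\iota_1 = \iota_1\ad(u,v)$ and $\psi\iota_2 = \iota_2$, and one then applies $\psi$ to the standard graded rotation homotopy $\iota_1\sim\iota_2$ of \cite{kklpa1}*{Lemma 2.1} (graded here because $|u|=1_G$ keeps the grading on $M_2A$ standard) to obtain $\iota_1\ad(u,v) = \psi\iota_1\sim\psi\iota_2 = \iota_2\sim\iota_1$. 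This factorization makes membership in $M_2A[t]$ automatic, so I would adopt it rather than hand-building a Whitehead path from your explicitly invertible $2\times 2$ matrix. Your derivation of the final assertion from ii) is correct.
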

\begin{proof}
For the proof of i), we adapt \cite{friendly}*{Proposition 2.2.6}.
Put $d := |u|$ and note that $|v| = d^{-1}$. The assignment 
$1 \mapsto 1$, $2 \mapsto d$ yields a grading on $M_2$; 
in the rest of the proof we will consider the latter algebra equipped with
this particular grading.

Put $U = \elmat_{1,1} u + \elmat_{2,2} 1_C$ and $V = \elmat_{1,1}v + \elmat_{2,2}1_C$.
A straightforward verification shows that we have a well-defined graded
algebra homomorphism
\[
\phi \colon M_2 A \to M_2 C, \qquad x \mapsto UxV.
\]
For each $k \in \{1,2\}$, 
write $\iota_k^R \colon R\to M_2 R$ for the corner inclusions of each algebra $R$. Observe that
\[
\phi \iota_1^A = \iota_1^C \inc_B \ad(u,v), \qquad \phi \iota_2^A = M_2(\inc_A)\iota_2^A.
\]
Applying $F$ we obtain that $F(\phi)F(\iota_2^A) = F(M_2(\inc_A))F(\iota_2^A)$. Since $F(\iota_2^A)$ is an isomorphism by $G$-stability, it follows that $F(\phi) = F(M_2(\inc_A))$. 
Hence 
\[
F(\iota_1^C)F(\inc_B\ad(u,v)) = F(\phi\iota_1^A) = F(M_2(\inc_A)\iota_1^A) = F(\iota_1^C)F(\inc_A).
\]
Once again by $G$-stability, we know that $F(\iota_1^C)$ is 
an isomorphism and thus $F(\inc_B\ad(u,v)) = F(\inc_A)$. 
This concludes the proof of i).

Now suppose that $A = B$, $uA, Av \subset A$ and $|u|=1_G$. Write $\iota_k = \iota_k^A$. The hypotheses imply in particular
that $\phi$ can be correstricted 
to a homomorphism $\psi \colon M_2 A \to M_2 A$
satisfying $\psi \iota_1 = \iota_1\ad(u,v)$ and $\psi \iota_2 = \iota_2$.
They also say that the grading on $M_2 A$ is the standard one and that
the homotopy given \cite{kklpa1}*{Lemma 2.1} is a graded homotopy between $\iota_1$ and $\iota_2$. This implies that
$\iota_1 \ad(u,v) = \psi \iota_1 \sim \psi\iota_2 = \iota_2 \sim \iota_1$,
proving ii).
\end{proof}

\subsection{Graded \topdf{$K$}{K}-theory}

Given a unital graded algebra $R$, 
its \emph{graded $K$-theory} $K_\ast^{\gr}(R)$ is the $K$-theory of the exact category of graded, finitely generated projective $R$-modules. 
In \cite{arcor}*{Section 3.3}, a homotopy invariant version $KH^{\gr}$ of graded $K$-theory is introduced. It comes equipped with a canonical comparison map $K^{\gr}_\ast(R) \to KH^{\gr}_\ast(R)$ for any graded algebra $R$.
Given a graded left $R$-module $M$, its \emph{shift} by $g \in G$ is the module $M[g] := M$
with the grading $M[g]_h = M_{gh}$. Note that the 
shift of modules induces an action of $\Z[G]$ on graded (homotopy) 
$K$-theory. In particular, 
the graded (homotopy) $K$-theory of a $\Z$-graded
algebra is a $\Z[\sigma]$-module.

\subsection{Strongly graded rings}

A graded unital ring $R$ is \emph{strongly graded}
if $R_g R_h = R_{gh}$ for each $g,h \in G$.
A theorem
of Dade \cite{dade}*{Theorem 2.8} says that $R$ is strongly 
graded if and only if the functor
\[
R \otimes_{R_{1_G}} - \colon \cat{Proj}_{\cat{fg}}(R_{1_G}) \to \cat{Gr-Proj}_{\cat{fg}}(R)
\]
is an equivalence of categories with inverse
\[
(-)_{1_G} \colon \cat{Gr-Proj}_{\cat{fg}}(R)
\to \cat{Proj}_{\cat{fg}}(R_{1_G}).
\]
In particular one has
canonical isomorphisms
\begin{equation}\label{map:dade}
K_\ast(R_{1_G}) \to K^{\gr}_\ast(R), 
\qquad 
KH_\ast(R_{1_G}) \to KH^{\gr}_\ast(R).
\end{equation}
for every strongly graded algebra $R$.

\begin{ex} \label{ex:tensor-strong}
By \cite{hazrat}*{Example 1.1.16}, if $A$ is a strongly graded unital algebra then so is $B \otimes_\ell A$ for any graded unital algebra $B$. 
\end{ex}

\subsection{Graphs, their Leavitt path algebras and Bowen-Franks modules}\label{subsec:lpa}

A (finite, directed) \emph{graph} is a tuple $E = (E^0,E^1,r,s)$
consisting of two finite sets $E^0$ of \emph{vertices}
and $E^1$ of \emph{edges} together with \emph{range}
and \emph{source} functions $r,s \colon E^1 \to E^0$.
We say that $v \in E^0$ is a \emph{sink} if $s^{-1}(v) = \emptyset$,
\emph{regular} if it is not a sink, and a \emph{source} if $r^{-1}(v) = \emptyset$. The sets of sinks, regular vertices and sources are denoted by $\sink(E)$, $\reg(E)$ and $\sour(E)$
respectively. 
A graph $E$ 
is \emph{regular} if $E^0 = \reg(E)$ and \emph{essential}
if it is regular and $\sour(E) = \emptyset$.

To a graph $E$, we will associate its \emph{Leavitt path $\ell$-algebra} $L(E)$. The latter is a quotient of the free algebra on the set $\{v,e,e^\ast : v \in E^0, e \in E^1\}$ by the relations:
\begin{align}
&vw = \delta_{v,w}v,  &(v,w \in E^0), \tag{V}\label{V}\\
&s(e)e = er(e) = e, &(e \in E^1),\tag{E1}\label{E1}\\
&r(e)e^\ast = e^\ast s(e) = e^\ast, &(e \in E^1),\tag{E2}\label{E2}\\
&f^\ast e = \delta_{f,e}r(e),   &(f,e \in E^1),\tag{CK1}\label{ck1}\\
&v = \sum_{e \in s^{-1}(v)} ee^\ast,  &(v \in \reg(E)).\tag{CK2}\label{ck2}
\end{align}
The \emph{standard grading} of $L(E)$ over $\Z$ is given by extension of
the rule $|v| = 0$, $|e| = 1$, $|e^\ast| = -1$ for each $v \in E^0$, $e \in E^1$.
The \emph{Cohn algebra} of $E$ is the  one obtained similarly dividing by all of the relations above except \eqref{ck2}. One has a canonical surjection $C(E) \to L(E)$. Writing $q_v = v-\sum_{e \in s^{-1}(v)} ee^\ast$ for each $v \in \reg(E)$ and $\cK(E) := \langle q_v : v \in \reg(E)\rangle$, we have an
exact sequence
\begin{equation}\label{ext:cohn}\tag{$\cC_E$}
    \cK(E) \to C(E) \to L(E).
\end{equation}
We shall refer to the exact sequence above as the \emph{Cohn extension} of $L(E)$. By \cite{lpabook}*{Proposition 1.5.11}, it
is always $\ell$-linearly split.

The (reduced) adjacency matrix $A_E \in \N_0 \in \Z^{\reg(E) \times E^0}$ of a graph $E$ is the one given by 
\[
(A_E)_{v,w} = \#\{e \in E^1 : s(e) = v, r(e)=w\}.
\]
Writing $I$ for the $\reg(E)\times E^0$-indexed matrix given by $I_{v,w} = \delta_{v,w}$, the \emph{Bowen-Franks module} of $E$ is defined as
\[
\gBF(E) := \coker(I-\sigma A_E^t) = \frac{\Z[\sigma]^{E^0}}{\langle v - \sigma \sum_{e \in s^{-1}(v)} r(e) : v \in \reg(E)\rangle}.
\]

By \cite{arcor}*{Theorem 5.3}, the (homotopy) graded
$K$-theory of $L(E)$ can be computed in terms of this group as
\[
K_\ast^{\gr}(L(E)) = \gBF(E) \otimes_\Z K_\ast(\ell), \qquad KH_\ast^{\gr}(L(E)) = \gBF(E) \otimes_\Z KH_\ast(\ell).
\]

\begin{rmk}\label{rmk:htpy-khgr} 
By \cite{arcor}*{Theorem 3.9, Equation 3.9 and Theorem 5.3}, if $E$
is a finite graph, then the comparison map 
$K^{\gr}_\ast(L(E))\to KH^{\gr}_\ast(L(E))$ can be 
identified with tensoring the canonical comparison map $K_\ast(\ell) \to KH_\ast(\ell)$ by $\gBF(E)$. In particular, if $\ell$ is a field, a PID, or 
more generally a regular noetherian ring, then $K^{\gr}_\ast(L(E)) = KH_\ast^{\gr}(L(E))$.

In the case in which $\ell$ is a field, as a corollary of the above we obtain that any homotopy equiavlence $L(E) \to L(F)$ induces
an isomorphism at the level of $K_0^{\gr}$. Were 
Conjecture~\ref{conj-hazrat} to be true, this would entail
that the algebras $L(E)$ and $L(F)$ are graded isomorphic. In
other words, Conjecture \ref{conj-hazrat} would imply
that two Leavitt path algebras are graded isomorphic if and only
if they are graded homotopy equivalent.
\end{rmk}

\begin{rmk} \label{rmk:lpa-str}
If $E$ is a finite graph, then $L(E)$ endowed with its standard $\Z$-grading is
strongly graded if and only if the underlying
graph has no sinks (\cite{gradedstr}*{Theorem 3.15}).   
\end{rmk}

\subsection{Pointed preordered modules}\label{subsec:ppom}

A \emph{pointed preordered module} is a tuple $(M, M_+, u)$
where $M$ is a $\Z[\sigma]$-module together 
with a distinguished 
submonoid $M_+$ such that $\N_0[\sigma] M_+ \subset M_+$
and a distinguished element $u \in M_+$ that is an order unit; this means
that for every $m \in M$ there exists $x \in \N_0[\sigma]$
satisfying $x \cdot u - m \in M_+$. A pointed preordered module
map $f \colon (M,M_+,u) \to (N, N_+,v)$ is a $\Z[\sigma]$-linear
map $f \colon M \to N$ such that $f(M_+) \subset N_+$ and $f(u) = v$.

\begin{ex}
If $R$ is a graded ring, then its graded Grothendieck group 
together with the submonoid $K_0^{\gr}(L(E))_+$ of classes of projective modules and the class $[R]$ form a pointed preordered module.     
\end{ex}

\begin{ex}
The Bowen-Franks module of a graph $E$ can be made into
a pointed preordered module by setting $\gBF(E)_+ = \langle \sum_{v \in E^0} x_v \cdot v : x_v \in \N_0[\sigma] \rangle$ and $1_E := \sum_{v \in E^0} [v]$.    
\end{ex}

\begin{rmk}
There is a canonical pointed preordered module 
map
\[
\can \colon \gBF(E) \to K_0^{\gr}(L(E)), \qquad [v] \mapsto [vL(E)]
\]
which, by \cite{arcor}*{Corollary 5.4}, is an isomorphism whenever $K_0(\ell) \cong \Z$.    
\end{rmk}

\section{The category \topdf{$\kkgr$}{kkgr} and its triangulated structure}\label{sec:kkgr}

Recall that an \emph{excisive homology theory} for graded algebras is a functor 
$H \colon \grAlg \to \cT$ with codomain a triangulated category $\cT$, such that 
for each extension
\begin{align*} \label{exte} \tag{$\mathcal E$}
K \xto{i} E \xto{p} Q
\end{align*}
there exists a triangle
\[
H(Q)[+1] \xto{\partial^H_{\cE}} H(K) \xto{H(i)} H(E) \xto{H(p)} H(Q).
\]
The maps $\partial^H_{\cE}$ are called the \emph{(left) boundary map}
of $H$ associated to an extension $\cE$, and they are required
to satisfy some compatibility conditions (\cite{kk}*{6.6}).

A morphism of excisive homology theories $(F,\phi)$ from $H$ to $H' \colon \grAlg \to \cT'$ consists of a triangulated functor  $F \colon \cT \to \cT'$ such that $FH = H'$
and a natural transformation $\phi \colon F(H(-)[+1]) \to H'(-)[+1]$ such that
the following diagram commutes for all extensions $\cE$:
\begin{equation} \label{mor:eht}
\begin{tikzcd}
    F(H(Q)[+1]) \arrow{dd}{\phi_Q} \arrow{dr}{F(\partial^H_{\cE})}& \\
     & H'(K) \\
     H'(Q)[+1] \arrow{ur}[below]{\qquad \quad \partial_{\cE}^{H'}} &   
\end{tikzcd}
\end{equation}
Graded bivariant algebraic $K$-theory
\[
j \colon \grAlg \to \kkgr
\]
is the initial $G$-stable, matricially stable, homotopy invariant excisive homology theory (\cite{kkg}*{Theorem 4.2.1}). We refer
the reader to \cite{kkg} (see also \cite{arcor}*{Sections 7 and 8}) for the construction of the category $\kkgr$ and its main properties.

In this section we shall study properties of 
boundary maps $\partial_{\cE} := \partial^j_{\cE}$ in $\kkgr$. 
The construction of $\kkgr$ is built off
that of the initial homology theory among the matricially stable, homotopy invariant theories which are not necessarily $G$-stable. This functor
will be denoted $j' \colon \grAlg \to kk_{\grAlg}$.
We first give an account on 
boundary maps in the latter theory.

\subsection{Boundary maps in \topdf{$kk_{\grAlg}$}{kk-grAlg}}

The category $kk_{\grAlg}$ is constructed, \textit{mutatis mutandis}, 
in the same way algebraic bivariant $K$-theory is constructed in \cite{kk}; we refer
the reader to \cite{kkg}*{Section 2} for a detailed explanation on the construction
of these categories. Its objects are, as those of $\kkgr$, all graded algebras.

Recall that given an extension \eqref{exte}, its \emph{classifying map} $c_\cE$ is computed explicitly by considering the tensor algebra
map $TQ \to E$ induced by a section $s \colon Q \to E$ of $p$, 
and then restricting it to the kernel $JQ := \ker(TQ \to Q)$
of the counit map $T \Rightarrow \id$. Up to homotopy $c_{\cE}$ is 
independent of the chosen section. 

 The boundary map $\partial'_{\cE} := \partial^{j'}_{\cE}$ of
an extension \eqref{exte} is given by a zig-zag of classifying maps, that of $\cE$ and the one associated to the \emph{loop extension}
\[\tag{$\cL_Q$}
\Omega Q \to PQ \xto{\ev_1} Q.
\]
Namely,
\[
\partial'_{\cE} := c_{\cE} \circ c_{\cL_{Q}}^{-1} = \Omega Q \xleftarrow{c_{\cL_{Q}}} J Q \xto{c_{\cE}} K.
\] 

\begin{rmk} \label{rmk:uht-loop}
Note that, by construction, the boundary map of $\cL_Q$ 
is the identity map of $\Omega Q$. As recalled 
above, if $H \colon \grAlg \to \cT$
is a matricially stable, homotopy invariant 
excisive homology then there is a unique homomorphism 
$(X,\phi) \colon j' \to H$.
Although we will not 
delve into the construction of $X$, 
we nonetheless note that since $\partial'_{\cL_Q} = \id_{\Omega Q}$
it follows that $\phi_Q = (\partial_{\cL_Q}^H)^{-1}$ for all $Q$. 
\end{rmk}

\subsection{Boundary maps in \topdf{$\kkgr$}{kkgr}}

The category $\kkgr$ is built in terms of $kk_{\grAlg}$.
Given $A, B \in \grAlg$, 
by definition
\[
\kkgr(A,B) = kk_{\grAlg}(M_G A, M_G B).
\]
If $f \colon A \to B$ is a graded algebra homomorphism, 
then $j(f) = j'(M_G f)$. 
The boundary map of an extension \eqref{exte} is, thus, 
an element $\partial_{\cE} \in \kkgr(\Omega Q, K) = 
kk_{\grAlg}(M_G \Omega Q, M_G K)$. To construct it, we consider the extension
\begin{align*} \label{MG-exte} \tag{$M_G \cE$}
M_G K \xto{M_G i} M_G E \xto{M_G p} M_G Q.
\end{align*}
Its boundary map in $kk_{\grAlg}$ 
is an arrow $\partial'_{M_G \cE} \colon \Omega M_G Q \to M_G K$. To define $\partial_{\cE}$,  
we precompose the latter by the by the flip map
$\tau_Q \colon M_G\Omega Q \to \Omega M_G Q$:
\begin{equation}\label{def:partial}
\partial_{\cE} := \partial'_{M_G \cE} \circ \tau_Q = c_{M_G \cE} \circ c_{\cL_{M_G Q}}^{-1} \circ \tau_Q.
\end{equation}

Our first computation will concern the loop extension. 

\begin{lem} \label{lem:partial-loop}
If $A$ is a graded algebra,
then $\partial_{\cL_A} = \id_{\Omega A}$.
\end{lem}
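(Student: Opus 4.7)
The plan is to reduce this statement to the fact, recorded in Remark~\ref{rmk:uht-loop}, that the boundary map of a loop extension in $kk_{\grAlg}$ is the identity. The key observation is that the extension $M_G \cL_A$ appearing in the definition of $\partial_{\cL_A}$ coincides with $\cL_{M_G A}$ up to swapping the order of the tensor factors $M_G$ and $P$. Concretely, I would first exhibit the morphism of extensions
\[
\begin{tikzcd}
M_G \Omega A \arrow{r} \arrow{d}{\tau_A} & M_G PA \arrow{r} \arrow{d}{\cong} & M_G A \arrow{d}{\id}\\
\Omega M_G A \arrow{r} & P M_G A \arrow{r} & M_G A
\end{tikzcd}
\]
whose vertical arrows are the canonical flip isomorphisms of graded algebras.

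Next, I would appeal to naturality of classifying maps with respect to morphisms of extensions: since the induced morphism on $J$ of the common quotient $M_G A$ is the identity, this yields the identity $\tau_A \circ c_{M_G \cL_A} = c_{\cL_{M_G A}}$, or equivalently $c_{M_G \cL_A} = \tau_A^{-1} \circ c_{\cL_{M_G A}}$ in $kk_{\grAlg}(JM_G A, \Omega M_G A)$.

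Finally, substituting this into the definition \eqref{def:partial} with $Q = A$, the factor $c_{\cL_{M_G A}} \circ c_{\cL_{M_G A}}^{-1}$ collapses and one obtains
\[
\partial_{\cL_A} = c_{M_G \cL_A} \circ c_{\cL_{M_G A}}^{-1} \circ \tau_A = \tau_A^{-1} \circ \tau_A = \id_{M_G \Omega A},
\]
which represents $\id_{\Omega A}$ in $\kkgr$. The only delicate point is the naturality of classifying maps in the graded setting, but this is established in the same way as in the ungraded case \cite{kk}*{Section 6} and is part of the foundational setup of $kk_{\grAlg}$ developed in \cite{kkg}*{Section 2}.
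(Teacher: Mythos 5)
Your proposal is correct and follows essentially the same route as the paper: the same morphism of extensions $M_G\cL_A \to \cL_{M_G A}$ via the flip maps, the same appeal to compatibility of classifying maps (the graded version of \cite{kk}*{Proposition 4.4.2}) yielding $\tau_A \circ c_{M_G\cL_A} = c_{\cL_{M_G A}}$, and the same substitution into \eqref{def:partial}. The only cosmetic slip is that the identity $c_{M_G\cL_A} = \tau_A^{-1}\circ c_{\cL_{M_G A}}$ lives in $kk_{\grAlg}(JM_GA, M_G\Omega A)$, not $kk_{\grAlg}(JM_GA,\Omega M_GA)$.
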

\begin{proof} 
Put $\cE = M_G \cL_A$ and $\cD = \cL_{M_G A}$. Following the definition of \eqref{def:partial}, 
we obtain the following equality in $kk_{\grAlg}$:
\[
\partial_{\cL_A} = \partial'_{M_G \cL_A} \circ \tau_{A}
= c_{\cE} \circ c_{\cD}^{-1} \circ \tau_A.
\]
Consider the flip 
map $\tau'_A \colon M_G P A \to P M_G A$. It fits in 
a morphism of extensions from $\cE$ to $\cD$:
\[
\begin{tikzcd}
M_G \Omega A \arrow{d}{\tau_A} \arrow{r}
& \arrow{d}{\tau'_A} M_G P A \arrow{r} & 
M_G A \arrow[equals]{d}  \\
\Omega M_G A \arrow{r} & P M_G A \arrow{r} & M_G A 
\end{tikzcd}
\]
By the graded version of \cite{kk}*{Proposition 4.4.2} (see \cite{kkg}*{p. 205-206}),
in $kk_{\grAlg}$ we have the equality $\tau_A \circ c_{\cE} = c_{\cD}$.
Hence $c_{\cE}$ is an isomorphism and $c_{\cD}^{-1} = c_{\cE}^{-1}\tau_A^{-1}$; this 
concludes the proof. 
\end{proof}

\begin{rmk} \label{rmk:kkg-uht-loop}
Let $H \colon \grAlg \to \cT$ be a $G$-stable, matricially stable, homotopy
invariant excisive homology theory. We recall the construction of
the unique map $j \to H$ from \cite{kkg}*{Section 4.2}. 

Given 
the unique map $(X',\phi') \colon j' \to H$, one defines $X(A) = X'(A)$
on objects $A \in \kkgr$ and $X(\alpha) = X'(\iota_B)^{-1}X'(\alpha)X'(\iota_A)$
on morphisms $\alpha \colon A\to B$.
The natural transformation $\iota \colon \id \to M_G(-)$ induces, 
for any extension \eqref{exte},
a map of extensions $\cE \to M_G \cE$. In particular, 
it follows that $\iota_K \circ \partial'_{\cE} =  
\partial'_{M_G \cE} \circ  \Omega\iota_Q = \partial'_{M_G \cE} \circ \tau_Q \circ  \iota_{\Omega Q}$ in $kk_{\grAlg}$ and hence 
\[
X(\partial_{\cE}) = X'(\iota_K)^{-1} 
X'(\partial'_{M_G \cE} \circ \tau_{Q})X'(\iota_{\Omega Q}) = X'(\partial'_{\cE}).
\]
This automatically implies that setting $\phi_Q = \phi'_Q = (\partial^H_{\cL_Q})^{-1}$
makes $(X,\phi)$ into a morphism of excisive homology theories. 
Note also that, by Lemma \ref{lem:partial-loop},
this is the only possible choice for $\phi$; c.f. Remark \ref{rmk:uht-loop}.
\end{rmk}

\subsection{Ungraded extensions}

We will write $j_{kk} \colon \Alg \to kk$ for
ungraded algebraic bivariant $K$-theory \cite{kk}. 
There is a canonical map $\mathrm{triv} \colon kk \to \kkgr$ induced by the trivial grading inclusion $\triv \colon \Alg \hookrightarrow \grAlg$. 

In particular, one may view any extension of ungraded algebras 
as one of trivially graded ones. 
Since $j \circ \mathrm{triv} \colon \Alg \to \kkgr$ is a an 
excisive homology theory (for ungraded algebras, i.e. for $G = \{1\}$), 
there is a unique map $(X,\phi) \colon 
j_{kk} \to j\circ\triv$ and 
$\phi_Q = (\partial_{\triv(\cL_Q)})^{-1} = \partial_{\cL_{\triv(Q)}}^{-1} = \id_{\Omega Q}$ for each algebra $Q$. Thus, we have the following.

\begin{thm} \label{thm:triv-boundary}
Let $\triv \colon kk \to \kkgr$ be the canonical 
functor induced by the trivial grading functor $\triv \colon \Alg \hookrightarrow \grAlg$.
If $\cE$ is an extension of ungraded algebras and $\partial^{kk}_{\cE}$
its boundary map in $kk$, then $\triv(\partial^{kk}_{\cE}) = \partial_{\cE}$.
\qed
\end{thm}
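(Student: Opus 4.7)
The plan is to derive the equality $\triv(\partial^{kk}_{\cE}) = \partial_{\cE}$ as a formal consequence of the universal property of $j_{kk}$ together with Lemma~\ref{lem:partial-loop}. First, I would observe that the composition $j\circ\triv\colon \Alg\to \kkgr$ is an excisive homology theory for ungraded algebras: matricial stability and homotopy invariance are inherited from $j$, while every ungraded extension $\cE$ yields, via $\triv$, a trivially graded extension whose image under $j$ is a distinguished triangle in $\kkgr$ with the expected functoriality.

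Next, by the universal property of $j_{kk}$ as the initial matricially stable, homotopy invariant excisive homology theory on $\Alg$, there is a unique morphism of excisive homology theories $(\overline{X},\phi)\colon j_{kk}\to j\circ\triv$. By construction, the triangulated functor $\overline{X}$ agrees with the canonical map $\triv\colon kk\to\kkgr$ introduced above, since the latter is defined precisely via this universal property. The reasoning of Remark~\ref{rmk:uht-loop} applies verbatim to $j_{kk}$ (its construction mirrors that of $j'$ after ignoring the grading), forcing
\[
\phi_Q = (\partial^{j\circ\triv}_{\cL_Q})^{-1} = (\partial_{\cL_{\triv Q}})^{-1}
\]
for every algebra $Q$. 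Applying Lemma~\ref{lem:partial-loop} to the trivially graded algebra $\triv(Q)$ gives $\partial_{\cL_{\triv Q}} = \id_{\Omega \triv Q}$, and hence $\phi = \id$.

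Finally, the defining commutativity of diagram \eqref{mor:eht} for the morphism $(\triv,\id)\colon j_{kk}\to j\circ\triv$ unfolds, for each ungraded extension $\cE$, to
\[
\triv(\partial^{kk}_{\cE}) \;=\; \partial^{j\circ\triv}_{\cE}\circ \phi_Q \;=\; \partial_{\cE},
\]
which is exactly the assertion of the theorem. The only point requiring care is the identification $\overline{X}=\triv$; this is essentially a matter of unwinding how the canonical functor $\triv\colon kk\to\kkgr$ was introduced, and once it is granted the argument is purely formal. I do not expect any substantial obstacle beyond being careful to cite the correct universal property for the ungraded theory $j_{kk}$ in place of $j'$.
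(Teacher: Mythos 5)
Your proposal is correct and follows essentially the same route as the paper: the paper likewise observes that $j\circ\triv$ is an excisive homology theory for ungraded algebras, invokes the universal property of $j_{kk}$ to get a unique morphism $(X,\phi)$, and uses Lemma~\ref{lem:partial-loop} to conclude $\phi_Q=(\partial_{\cL_{\triv(Q)}})^{-1}=\id_{\Omega Q}$, whence the compatibility diagram \eqref{mor:eht} yields the claim. Your extra care in identifying $\overline{X}$ with the canonical functor $\triv$ is a point the paper leaves implicit, but the argument is the same.
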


We conclude this subsection with a characterization 
of the boundary map of the cone extension 
\[\label{ext:cone}
M_\infty \to \Gamma \to \Sigma \tag{$\mathfrak K$}
\]
in both 
the graded and ungraded case. First, we need a definition.

\begin{defn} Let $s_0 = \sum_{i \in \N} \elmat_{i+1,i} \in \Gamma$ be the \emph{right shift} and $s := [s_0]$ its class as an element of $\Sigma$. Since $s_0^\ast s_0 = 1$
and $s_0 s_0^\ast = 1-\elmat_{1,1}$, it follows that $s$ is a unit. Write $L = s^{-1}$ and
\[
\xi_L \colon \ell \to \Omega \Sigma
\]
for the morphism in $kk(\ell, \Omega \Sigma)$ corresponding to $[L] \in KH_1(\Sigma)$.
\end{defn}

\begin{lem} \label{lem:partial-s=e11}
The following diagram commutes in $\kkgr$:
\[
\begin{tikzcd}
\ell \arrow{r}{\triv(\xi_L)} \arrow{dr}[below]{\inc_1 \quad }& \Omega \Sigma \arrow{d}{\partial_{\mathfrak K}}  \\ & M_\infty
\end{tikzcd}
\]
In particular $\triv(\xi_L)$ is an isomorphism.
\end{lem}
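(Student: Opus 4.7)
The plan is to reduce to an ungraded statement and then carry out a standard Toeplitz-index computation. Since $\xi_L$ is by construction already defined in ungraded $kk$, and the cone extension $\mathfrak K$ is an extension of trivially graded algebras, Theorem~\ref{thm:triv-boundary} identifies $\partial_{\mathfrak K}$ with $\triv(\partial^{kk}_{\mathfrak K})$. As $\triv \colon kk \to \kkgr$ is a functor, it suffices to show that the analogous diagram
\[
\begin{tikzcd}
\ell \arrow{r}{\xi_L} \arrow{dr}[below]{\inc_1\quad} & \Omega\Sigma \arrow{d}{\partial^{kk}_{\mathfrak K}} \\ & M_\infty
\end{tikzcd}
\]
commutes in $kk$.

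To prove this ungraded identity, I would pass to $KH$-theory via the natural isomorphisms $kk(\ell, \Omega\Sigma) \cong KH_1(\Sigma)$ and $kk(\ell, M_\infty) \cong KH_0(M_\infty)$. By the definition of $\xi_L$, the composite $\partial^{kk}_{\mathfrak K} \circ \xi_L$ corresponds under these isomorphisms to $\partial[L]$, where $\partial \colon KH_1(\Sigma) \to KH_0(M_\infty)$ is the connecting map of the cone extension; on the other hand, $\inc_1$ corresponds to $[\elmat_{1,1}]$. The task reduces to the purely $K$-theoretic identity $\partial[L] = [\elmat_{1,1}]$. This is a standard computation: $s_0 \in \Gamma$ is an isometry with $1 - s_0 s_0^\ast = \elmat_{1,1}$, so it completes to the self-inverse
\[
V = \begin{pmatrix} \elmat_{1,1} & s_0 \\ s_0^\ast & 0 \end{pmatrix} \in M_2(\Gamma)
\]
which lifts (up to a permutation matrix in $M_2(\Sigma)$) the diagonal matrix $\mathrm{diag}(L, s)$; the standard formula for the index map of an extension then yields $\partial[L] = [1 - s_0 s_0^\ast] = [\elmat_{1,1}]$, as required.

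For the final claim that $\triv(\xi_L)$ is an isomorphism, I would invoke two-out-of-three in the commuting triangle. The map $\inc_1$ is a $\kkgr$-isomorphism by $G$-stability (matricial stability suffices). The boundary $\partial_{\mathfrak K}$ is an isomorphism because $\Gamma$ vanishes in $\kkgr$: the isometry $s_0$ endows $\Gamma$ with an infinite sum ring structure, so that $\mathrm{id}_\Gamma + \mathrm{id}_\Gamma = \mathrm{id}_\Gamma$ in $\kkgr(\Gamma, \Gamma)$ and hence $\Gamma = 0$ (this is the ungraded prototype of the results to be developed in Section~\ref{sec:inf-sum}). The commuting triangle then forces $\triv(\xi_L)$ to be an isomorphism. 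The main delicate point in the plan is Step~2: one must keep track of sign conventions carefully so that $L = s^{-1}$ (and not $s$ itself) is the element whose boundary is $+[\elmat_{1,1}]$ rather than its negative---this is the algebraic counterpart of the fact that the left shift has Fredholm index $+1$.
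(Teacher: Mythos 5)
Your proposal follows essentially the same route as the paper: reduce to the ungraded case via Theorem~\ref{thm:triv-boundary}, then verify the $K$-theoretic identity $\partial[L]=[\elmat_{1,1}]$ for the cone extension, the only difference being that the paper outsources this index computation to the proof of Lemma~11.1 of \cite{kkhlpa} while you carry it out by hand (and you also spell out the two-out-of-three argument for the ``in particular'' clause, which the paper leaves implicit). The one cosmetic slip is your phrasing of the swindle as $\id_\Gamma+\id_\Gamma=\id_\Gamma$; the infinite-sum structure actually gives $j(\id)+j((-)^\infty)=j((-)^\infty)$, whence $j(\id_\Gamma)=0$, but the conclusion $\Gamma=0$ in $\kkgr$ is of course correct.
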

\begin{proof} Since by Theorem \ref{thm:triv-boundary} the functor $\triv \colon kk \to \kkgr$
is compatible with boundary maps, we may assume that $G$ is the trivial group. The result now follows from \cite{kkhlpa}*{proof of Lemma 11.1}, 
which in particular says that the boundary $\partial \colon KH_1(\Sigma) \to KH_0(M_\infty)$ maps $[s]$ to $[1-s_0^\ast s_0] - [1-s_0s_0^\ast] = -[\elmat_{1,1}]$ and thus $\partial([L]) = [\elmat_{1,1}]$. 
\end{proof}

\subsection{Compatibility with tensor products.}

Next we use Lemma \ref{lem:partial-loop} to prove the 
compatibility of left boundary maps with tensor products.

\begin{thm} \label{thm:tensor-compatibility}
If
\[
K \to E\to Q \tag{$\cE$}
\]
is an extension and $A$ a graded algebra, then the boundary 
map of the extension 
\[
K \otimes A \to E \otimes A \to Q \otimes A \tag{$\cE \otimes A$}
\]
equals that of $\cE$ tensored by $A$. That is,
\[
\partial_{\cE \otimes A} = \partial_\cE \otimes A.
\]
\end{thm}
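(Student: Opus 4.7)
The plan is to use the universal property of $j \colon \grAlg \to \kkgr$ to lift $-\otimes A$ to an endofunctor of $\kkgr$, and then to reduce the compatibility of boundary maps to the loop-extension computation of Lemma~\ref{lem:partial-loop}.

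First, I would observe that the functor $T_A := (-)\otimes_\ell A \colon \grAlg \to \grAlg$ sends extensions to extensions, since a linear splitting $s$ of $p$ in $\cE$ yields a linear splitting $s \otimes \id_A$ of $p \otimes \id_A$. Combined with the canonical identifications $M_X(R \otimes A) \cong M_X R \otimes A$ and $(R \otimes A)[t] \cong R[t] \otimes A$, this shows that $j \circ T_A \colon \grAlg \to \kkgr$ is itself a $G$-stable, matricially stable, homotopy invariant excisive homology theory. By the universal property of $j$, it factors uniquely as $\overline{T_A} \circ j$ through a triangulated functor $\overline{T_A} \colon \kkgr \to \kkgr$, which is what we take to mean $-\otimes A$ on $\kkgr$; in particular $\partial_{\cE} \otimes A$ is by definition $\overline{T_A}(\partial_{\cE})$.

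The universal property also produces a canonical morphism of excisive homology theories $(\overline{T_A}, \phi) \colon j \to j \circ T_A$, and by Remark~\ref{rmk:kkg-uht-loop} the natural transformation is explicitly $\phi_Q = \bigl(\partial^{j \circ T_A}_{\cL_Q}\bigr)^{-1}$. By definition of the boundary map for a composed theory, $\partial^{j \circ T_A}_{\cL_Q} = \partial_{\cL_Q \otimes A}$; and under the canonical identifications $\Omega Q \otimes A \cong \Omega(Q \otimes A)$ and $PQ \otimes A \cong P(Q \otimes A)$, the extension $\cL_Q \otimes A$ coincides with $\cL_{Q \otimes A}$. Lemma~\ref{lem:partial-loop} then forces $\partial_{\cL_{Q \otimes A}} = \id$, so that $\phi_Q = \id$ for every $Q$.

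Finally, applying the commutativity condition of diagram \eqref{mor:eht} to $(\overline{T_A}, \phi)$ and the extension $\cE$ yields
\[
\partial_{\cE} \otimes A = \overline{T_A}(\partial_{\cE}) = \partial^{j \circ T_A}_{\cE} \circ \phi_Q = \partial_{\cE \otimes A},
\]
which is the desired equality. The main obstacle here is conceptual rather than computational: the argument rests on setting up $-\otimes A$ as a triangulated endofunctor of $\kkgr$ via the universal property (rather than via any a priori monoidal structure on $\kkgr$) and tracking that the resulting $\phi$ from Remark~\ref{rmk:kkg-uht-loop} is the identity. Once that is cleanly arranged, Lemma~\ref{lem:partial-loop} closes the argument.
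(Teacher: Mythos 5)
Your proposal is correct and follows essentially the same route as the paper: define $-\otimes A$ on $\kkgr$ via the universal property applied to $j(-\otimes A)$, use Remark~\ref{rmk:kkg-uht-loop} to identify $\phi_Q$ with $(\partial^{j(-\otimes A)}_{\cL_Q})^{-1}$, observe $\cL_Q\otimes A=\cL_{Q\otimes A}$, and invoke Lemma~\ref{lem:partial-loop}. Your write-up is in fact slightly more careful than the paper's (which drops an inverse and contains a typo in its final sentence), but the content is the same.
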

\begin{proof}
Recall that the functor $- \otimes A \colon \kkgr \to \kkgr$
is defined using the universal property of $j$
as the unique morphism of homology theories
from $j$ to $H := j(- \otimes A)$. 
As noted in Remark~\ref{rmk:kkg-uht-loop}, this entails in particular 
that $\partial_{\cE} \otimes A = \partial_{\cE}^H \circ (\partial_{\cL_Q}^{H})^{-1} = 
\partial_{\cE \otimes A} \circ \partial_{\cL_Q \otimes A}$.
Since 
$\mathcal{L}_Q \otimes A = \mathcal L_{Q \otimes A}$, it follows from Lemma 
\ref{lem:partial-loop} that $\mathcal{L}_Q \otimes A$ is the identity map; this concludes the proof.
\end{proof}

\subsection{Adjoint equivalences between \topdf{$\Omega$}{loops} and \topdf{$\Sigma$}{suspensions}} 

As a consequence of Lemma \ref{lem:partial-s=e11}, the natural transformation
\begin{equation}\label{def:unit}
\lambda \colon \Omega \Sigma \Rightarrow \id, \qquad \lambda_A := \triv(\xi_L)^{-1} \otimes A
\end{equation}
is an isomorphism. Put $\flip \colon \Omega \Sigma \cong \Sigma \Omega$
for the permutation of tensor factors. Since $\flip \otimes -$ is a natural isomorphism, so is $\gamma := (\flip \otimes - ) \circ \lambda^{-1} \colon \id \Rightarrow \Sigma \Omega$. We have thus explictly constructed pseudoinverses exhibiting the fact that
tensoring by $\Omega$ and $\Sigma$ yield inverse equivalences of categories.
In particlular, this allows us to see $\Omega$ as a left adjoint of $\Sigma$
by viewing $\lambda$ as the counit of an adjunction:

\begin{thm} \label{thm:loop-adj-ol}
The natural transformation $\lambda$ of \eqref{def:unit}
is the counit of an adjunction whose unit is $\Theta := \gamma^{-1} \Sigma \Omega \circ \Sigma \lambda^{-1} \Omega \circ \gamma$. Explicitly,
\begin{equation}\label{def:counit}
\Theta_B := \Theta_0 \otimes B, \qquad \Theta_0 = (\triv(\xi_L)^{-1} \circ \flip \otimes \Sigma \otimes \Omega) \circ (\Sigma \otimes \triv(\xi_L) \otimes \Omega) \circ (\flip \circ \triv(\xi_L)).
\end{equation}
\end{thm}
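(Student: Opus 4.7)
The plan is to verify the two triangle identities characterizing an adjunction with unit $\Theta$ and counit $\lambda$, namely
\[
\lambda\Omega \circ \Omega\Theta = \id_\Omega \quad \text{and} \quad \Sigma\lambda \circ \Theta\Sigma = \id_\Sigma,
\]
and to unwind the explicit formula for $\Theta_0$ displayed in \eqref{def:counit}.

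First I would settle the claim that $\Theta_B = \Theta_0 \otimes B$. Every endofunctor appearing ($\Omega \otimes -$, $\Sigma \otimes -$, and their iterates) is tensoring by a fixed object of $\kkgr$, and every natural transformation involved ($\lambda$, $\gamma$, the symmetry $\flip$, and $\triv(\xi_L)$) is of the form $\alpha \otimes -$ for a fixed morphism $\alpha$ in $\kkgr$. This class is stable under composition and whiskering, so $\Theta$ is automatically of this form, with $\Theta_0 = \Theta_\ell$. Evaluating $\gamma^{-1}\Sigma\Omega \circ \Sigma\lambda^{-1}\Omega \circ \gamma$ at the monoidal unit $\ell$ and substituting $\gamma_\ell = \flip \circ \triv(\xi_L)$ (from \eqref{def:unit}) and $\lambda^{-1}_A = \triv(\xi_L)\otimes A$ (from Lemma \ref{lem:partial-s=e11}) produces the displayed expression \eqref{def:counit}.

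For the second triangle identity, the computation goes as follows:
\[
\Sigma\lambda \circ \Theta\Sigma = \Sigma\lambda \circ \gamma^{-1}\Sigma\Omega\Sigma \circ \Sigma\lambda^{-1}\Omega\Sigma \circ \gamma\Sigma.
\]
Naturality of $\gamma^{-1}\colon \Sigma\Omega \Rightarrow \id$ applied to the natural transformation $\Sigma\lambda$ allows commuting the leftmost pair into $\gamma^{-1}\Sigma \circ \Sigma\Omega\Sigma\lambda$. By functoriality of left-whiskering, the middle pair rewrites as $\Sigma \cdot (\Omega\Sigma\lambda \circ \lambda^{-1}\Omega\Sigma)$. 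The bracketed expression is the identity on $\Omega\Sigma$: from naturality of $\lambda$ applied at $\lambda_A$, together with invertibility of $\lambda_A$, one obtains $\Omega\Sigma\lambda = \lambda\Omega\Sigma$, whence $\Omega\Sigma\lambda \circ \lambda^{-1}\Omega\Sigma = (\lambda \circ \lambda^{-1})\Omega\Sigma = \id_{\Omega\Sigma}$. What remains is $\gamma^{-1}\Sigma \circ \gamma\Sigma = (\gamma^{-1}\circ\gamma)\Sigma = \id_\Sigma$. The first triangle identity $\lambda\Omega \circ \Omega\Theta = \id_\Omega$ is proved by the mirror argument, now using naturality of $\lambda$ to commute past $\Omega\gamma^{-1}\Sigma\Omega$ and collapsing a $\gamma \circ \gamma^{-1}$ pair.

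The chief difficulty is bookkeeping of whiskerings and of the hidden flip isomorphisms inside $\gamma$; there is no genuinely new mathematical content. Conceptually the theorem is an instance of the standard fact that any equivalence of categories can be promoted to an adjoint equivalence by modifying one of the two witnessing natural isomorphisms, and the explicit composite defining $\Theta$ is precisely the required modification. Once Lemma \ref{lem:partial-s=e11} is in hand, the verification is formal.
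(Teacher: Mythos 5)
Your proof is correct and follows essentially the same route as the paper, which simply cites the standard promotion of an equivalence to an adjoint equivalence via the triangle identities (\cite{context}*{Remark 4.2.7, Proposition 4.4.5}); you carry out the verification that the paper delegates to that reference, using the standard facts $\Omega\Sigma\lambda=\lambda\Omega\Sigma$ and $\Sigma\Omega\gamma=\gamma\Sigma\Omega$ for invertible (co)augmentations, and your identification of $\Theta_0=\Theta_\ell$ via the tensor description of all the functors and transformations involved matches \eqref{def:counit}.
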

\begin{proof} This follows from the
characterization of an adjunction in terms of triangle identities (\cite{context}*{Remark 4.2.7}); the reader can view the dual construction of a counit
in \cite{context}*{proof of Proposition 4.4.5}.
\end{proof}

Similarly, we can use the natural equivalence 
$\mathfrak u := \lambda^{-1} = \triv(\xi_L) \otimes -$ with inverse $\gamma^{-1}$
to construct an adjunction in which $\Omega$ is right adjoint to $\Sigma$:
\begin{thm} \label{thm:loop-adj-or}
The natural transformation $\mathfrak u$, inverse to \eqref{def:unit},
is the unit of an adjunction whose counit is $\mathfrak c := \gamma^{-1} \circ \Sigma \lambda \Omega \circ \Sigma \Omega \gamma$. Explicitly,
\begin{equation}\label{def:unit'}
\mathfrak c_B = \mathfrak c_0 \otimes B, \qquad \mathfrak c_0 := (\xi_L^{-1}\circ \flip^{-1}) \circ (\Sigma \otimes \xi_L^{-1} \otimes \Omega) \circ (\Sigma \otimes \Omega \otimes \flip\circ\xi_L).
\end{equation}
\end{thm}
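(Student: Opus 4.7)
The plan is to dualize the argument of Theorem~\ref{thm:loop-adj-ol}. There we took the isomorphism $\lambda \colon \Omega\Sigma \Rightarrow \id$ (invertible by Lemma~\ref{lem:partial-s=e11}) as the counit of an adjunction $\Omega \dashv \Sigma$ and built the unit $\Theta$ by the whiskered formula dictated by the triangle identities. Here we are in the completely symmetric situation: we wish to promote $\mathfrak u = \lambda^{-1} \colon \id \Rightarrow \Omega\Sigma$ to the unit of the reverse adjunction $\Sigma \dashv \Omega$, and we must supply a compatible counit $\mathfrak c$.

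The key categorical fact I would invoke is that whenever a pair of functors $(F,G)$ comes equipped with natural isomorphisms $\eta \colon \id \Rightarrow GF$ and $\epsilon \colon FG \Rightarrow \id$ that are not necessarily adjoint, one can always modify $\epsilon$ so that the resulting pair becomes the unit and counit of an adjunction $F \dashv G$; this is the dual of the construction in \cite{context}*{proof of Proposition~4.4.5}. In our setting $F = \Sigma$, $G = \Omega$, $\eta = \mathfrak u$, and the natural isomorphism to be modified is $\gamma^{-1} \colon \Sigma\Omega \Rightarrow \id$; the recipe produces precisely $\mathfrak c = \gamma^{-1} \circ \Sigma\lambda\Omega \circ \Sigma\Omega\gamma$. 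The explicit formula~\eqref{def:unit'} is then obtained by unpacking each constituent in terms of $\triv(\xi_L)^{-1}$, $\flip^{-1}$, and the whiskerings by $\Sigma$ and $\Omega$, exactly mirroring how the formula for $\Theta$ was unpacked in the proof of Theorem~\ref{thm:loop-adj-ol}.

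What remains is to verify the two triangle identities
\[
(\Omega \mathfrak c) \circ (\mathfrak u \Omega) = \id_\Omega, \qquad (\mathfrak c \Sigma) \circ (\Sigma \mathfrak u) = \id_\Sigma.
\]
Expanding the definition of $\mathfrak c$ and using naturality of $\lambda$, $\gamma$ and $\flip$ (together with the defining relation $\gamma = (\flip \otimes -) \circ \lambda^{-1}$ that ties $\gamma$ to $\lambda$), both composites collapse to a natural transformation pasted with its inverse, hence to the identity. The essentially only obstacle is diagrammatic bookkeeping: the ordering of the three whiskerings in $\mathfrak c$ must be exactly the one prescribed by the dual construction in \cite{context}, since an incorrect ordering would yield a transformation satisfying only one of the two triangle identities. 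Once the correct ordering is fixed, both verifications reduce to routine manipulations of invertible whiskered $2$-cells.
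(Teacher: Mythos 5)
Your proposal matches the paper's argument: the paper proves Theorem \ref{thm:loop-adj-ol} by citing the triangle-identity characterization of adjunctions and the (dual of the) adjoint-equivalence construction in \cite{context}*{proof of Proposition 4.4.5}, and Theorem \ref{thm:loop-adj-or} is obtained by exactly the dualization you describe, with $\mathfrak c$ being the standard whiskered modification of $\gamma^{-1}$ compatible with the unit $\mathfrak u = \lambda^{-1}$. Your verification sketch of the triangle identities is the content of that cited construction, so the approach is essentially identical.
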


From Theorems \ref{thm:loop-adj-ol} and \ref{thm:loop-adj-or} we obtain, for each pair of graded algebras $A$ and $B$, natural abelian group isomorphisms
\begin{equation}\label{def:cR}
    \cR_{A,B} \colon \kkgr(\Omega A, B) \iso \kkgr(A, \Sigma B), \quad \zeta \mapsto \Sigma \zeta \circ \Theta_A,
\end{equation}
\begin{equation}\label{def:cL}
    \cL_{A,B} \colon \kkgr(A, \Sigma B) \iso \kkgr(\Omega A, B), \quad 
    \zeta \mapsto \lambda_B \circ \Omega \zeta
\end{equation}
and 
\begin{equation}\label{def:cU}
    \cU_{A,B} \colon \kkgr(\Sigma A, B) \iso \kkgr(A, \Omega B), \quad \zeta \mapsto \Omega \zeta \circ \mathfrak u_A,
\end{equation}
\begin{equation}\label{def:cV}
    \cV_{A,B} \colon \kkgr(A, \Omega B) \iso \kkgr(\Sigma A, B), \quad 
    \zeta \mapsto \mathfrak c_B \circ \Sigma \zeta.
\end{equation}

\subsection{Right boundaries}
Given an extension \eqref{exte}, its \emph{right boundary map} is defined as
\[
\delta_\cE := -\cR_{Q,K}(\partial_\cE) = -\Sigma \partial_\cE \circ (\Theta_0 \otimes Q).
\]

\begin{rmk} \label{rmk:right-boundary-tensor}
By Theorem \ref{thm:tensor-compatibility}, right boundary maps are compatible with tensoring in the sense that $\delta_{\cE \otimes A}  = \delta_\cE \otimes A$.    
\end{rmk}

To conclude the section we record the following 
computation which will be useful to us 
later on.

\begin{lem} \label{lem:boun-inc}
Let $A$ be a graded algebra. 
The right boundary map of the cone extension
\[
M_\infty A \to \Gamma A \to \Sigma A \tag{\ref{ext:cone} $\otimes A$}
\]
is $\delta_{\mathfrak K \otimes A} = -\Sigma \inc_1 \otimes A$.
\end{lem}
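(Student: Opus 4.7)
The plan is to reduce to the case $A = \ell$ and then unwind the adjunction. By Remark \ref{rmk:right-boundary-tensor}, right boundary maps are compatible with tensor products, so $\delta_{\mathfrak K \otimes A} = \delta_{\mathfrak K} \otimes A$. It therefore suffices to prove the lemma for $A = \ell$, namely that $\delta_{\mathfrak K} = -\Sigma \inc_1$ in $\kkgr(\Sigma, \Sigma M_\infty)$. By the definition of the right boundary, this is in turn equivalent to showing $\cR_{\Sigma, M_\infty}(\partial_{\mathfrak K}) = \Sigma \inc_1$.

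Next I would exploit that Theorems \ref{thm:loop-adj-ol} and \ref{thm:loop-adj-or} exhibit $\lambda$ and $\Theta$ as the counit and unit of an adjunction $\Omega \dashv \Sigma$ in $\kkgr$. By the standard triangle identities for such an adjunction, the map $\cR_{A,B}$ given by $\zeta \mapsto \Sigma \zeta \circ \Theta_A$ and the map $\cL_{A,B}$ given by $\zeta \mapsto \lambda_B \circ \Omega \zeta$ are mutually inverse. Hence it is equivalent, and more convenient, to verify the identity
\[
\cL_{\Sigma, M_\infty}(\Sigma \inc_1) = \partial_{\mathfrak K}.
\]

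To check this I would expand the left-hand side via \eqref{def:cL}, obtaining $\lambda_{M_\infty} \circ \Omega \Sigma \inc_1$. Naturality of $\lambda \colon \Omega \Sigma \Rightarrow \id$ at the morphism $\inc_1 \colon \ell \to M_\infty$ rewrites this as $\inc_1 \circ \lambda_\ell$, and by the definition \eqref{def:unit} one has $\lambda_\ell = \triv(\xi_L)^{-1}$. On the other hand, Lemma \ref{lem:partial-s=e11} asserts that $\triv(\xi_L)$ is an isomorphism satisfying $\partial_{\mathfrak K} \circ \triv(\xi_L) = \inc_1$, whence $\partial_{\mathfrak K} = \inc_1 \circ \triv(\xi_L)^{-1}$. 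The two expressions coincide, finishing the argument.

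There is no real technical obstacle beyond careful bookkeeping: the genuine content is supplied by Lemma \ref{lem:partial-s=e11} (which identifies $\partial_{\mathfrak K}$ with $\inc_1 \circ \lambda_\ell$) together with naturality of $\lambda$. The points most likely to trip one up are (i) remembering that $\lambda$, not $\Theta$, plays the role of counit of $\Omega \dashv \Sigma$ so that $\cL = \cR^{-1}$, and (ii) tracking the sign in $\delta_\cE = -\cR_{Q,K}(\partial_\cE)$, which is precisely what produces the minus sign on the right-hand side of the statement.
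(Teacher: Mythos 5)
Your argument is correct and follows essentially the same route as the paper's proof: both reduce to $A=\ell$ via Remark \ref{rmk:right-boundary-tensor} and both rest on Lemma \ref{lem:partial-s=e11}, which gives the factorization $\partial_{\mathfrak K}=\inc_1\circ\triv(\xi_L)^{-1}$. The only (immaterial) difference is that you verify the equivalent identity $\cL_{\Sigma,M_\infty}(\Sigma\inc_1)=\partial_{\mathfrak K}$ using naturality of $\lambda$ and the fact that $\cL=\cR^{-1}$, whereas the paper computes $\cR_{\Sigma,M_\infty}(\partial_{\mathfrak K})$ directly and uses $\cR_{\Sigma,\ell}(\xi_L^{-1})=\id_\Sigma$.
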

\begin{proof} In light of Remark \ref{rmk:right-boundary-tensor}, we may assume that $A = \ell$. In this case, the extension consists of ungraded algebras; 
by Theorem \ref{thm:triv-boundary}, we may thus prove the statement in $kk$ (in other words, 
we may assume $G$ to be the trivial group).
Denote the left and right boundary maps of the cone extension \eqref{ext:cone} by $\partial$ and $\delta$ respectively.
By Lemma \ref{lem:partial-s=e11} and the definition of $\delta$, 
\[
\delta = -\Sigma \partial \circ \Theta_Q = -\Sigma \inc_1 \circ (\Sigma \otimes \xi_L^{-1})  \circ (\Theta \otimes \Sigma) = -\Sigma \inc_1 \circ \cR_{\Sigma, \ell}(\xi_L^{-1}).
\]
To conclude we observe that $\xi_L^{-1} = \cL_{\Sigma, \ell}(\id_\Sigma) = \cR_{\Sigma, \ell}^{-1}(\id_\Sigma)$.
\end{proof}

\section{Graded infinity-sum algebras, cones and suspensions} \label{sec:inf-sum}

A \emph{graded $\ast$-algebra} is a graded algebra $R$ together 
with an involution $\ast \colon R \to R$ such that $R_g^\ast \subset R_{g^{-1}}$
for each $g \in G$. A graded \emph{sum $\ast$-algebra} is a graded $\ast$-algebra $R$ together with
homogeneous elements $x,y \in A_{1_G}$ such that 
\[
x^\ast x = y^\ast y = xx^\ast +yy^\ast = 1.
\]

If $x,y \in A_{1_G}$ make $A$ into a graded sum $\ast$-algebra, then $y^\ast x = 0$. This follows from left multiplying by $y^\ast$ and right multiplying by $x$ in the equality $xx^\ast + yy^\ast = 1$. Likewise we have that $x^\ast y = 0$. As a consequence, the assignment
\[
\isum \colon A \times A \to A, \qquad  a \isum b := xax^\ast + yby^\ast
\]
is a graded algebra homomorphism. Given graded $\ast$-algebra
homomorphisms $f,g \colon B \to A$, we write $f \isum g$
for the $\ast$-algebra homomorphism $b \mapsto f(b) \isum g(b)$.

A graded \emph{infinite-sum} 
$\ast$-algebra is a graded sum $\ast$-algebra $A$ together with a 
graded homomorphism $(-)^\infty \colon A \to A$ such that
$\isum \circ  (\id \times (-)^\infty) = (-)^\infty$, i.e. such that
\[
a \isum a^\infty = a^\infty \qquad (\forall a \in A).
\]

Our motivation for considering such algebras stems 
from the fact that they possess desirable properties in 
algebraic bivariant $K$-theory. Next,
we adapt some results from \cite{kk} to the graded setting.

\begin{prop}\label{prop:sum=sum}
If $B$ is a graded sum $\ast$-algebra
and $f, g \colon A \to B$ are graded algebra homomorphisms,
then $j(f) + j(g) \in \kkgr(A,B)$ equals $j(f\isum g)$.
\end{prop}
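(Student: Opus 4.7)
The strategy is to factor $f \isum g$ through the matrix algebra $M_2 B$ via a graded homomorphism encoded by $x$ and $y$, compute the class of that homomorphism in $\kkgr$, and then invoke orthogonal additivity of $j$ to separate the contributions of $f$ and $g$. Setting $u_1 := x$ and $u_2 := y$, the orthogonality relations $x^\ast y = y^\ast x = 0$ recalled in the paragraph preceding the statement, together with the sum-algebra axioms, yield $u_i^\ast u_j = \delta_{i,j}$ and $u_1 u_1^\ast + u_2 u_2^\ast = 1$. These relations ensure that
\[
\Phi \colon M_2 B \to B, \qquad \elmat_{i,j} \otimes b \mapsto u_i\, b\, u_j^\ast
\]
is a well-defined unital graded algebra homomorphism (multiplicativity uses $u_j^\ast u_k = \delta_{j,k}$, and the grading is preserved because $|u_i| = 1_G$). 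Likewise, the assignment
\[
f \oplus g \colon A \to M_2 B, \qquad a \mapsto \elmat_{1,1} \otimes f(a) + \elmat_{2,2} \otimes g(a),
\]
is a graded homomorphism (the corners $\elmat_{1,1}$ and $\elmat_{2,2}$ are orthogonal), and a direct computation gives $\Phi \circ (f \oplus g) = f \isum g$.

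Next I would compute $j(\Phi)$. Both compositions $\Phi \circ \iota_i = \ad(u_i, u_i^\ast)\colon B \to B$, $b \mapsto u_i b u_i^\ast$, satisfy the hypotheses of Definition \ref{def:ad}: one has $u_i^\ast u_i = 1$, $|u_i| = 1_G$, and trivially $u_i B, B u_i^\ast \subset B$. Applying Proposition \ref{prop:ad}(ii) gives $\ad(u_i, u_i^\ast) \sim_{M_2} \id_B$, whence $j(\ad(u_i, u_i^\ast)) = \id_B$ by homotopy invariance and matricial stability. By Proposition \ref{prop:incx=incy} applied with $1,2$ viewed as elements of the trivially graded set $|\{1,2\}|$, one has $j(\iota_1) = j(\iota_2)$, and matricial stability makes $j(\iota_1)$ invertible. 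Combining these facts, $j(\Phi) = j(\iota_1)^{-1} = j(\iota_2)^{-1}$.

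The crucial identity is $j(f \oplus g) = j(\iota_1)\bigl(j(f) + j(g)\bigr)$ in $\kkgr(A, M_2 B)$. This rests on the orthogonal-additivity property of $j$: for graded homomorphisms $h_1, h_2 \colon A \to D$ whose images satisfy $h_i(A) \cdot h_j(A) = 0$ for $i \neq j$, the pointwise sum $h_1 + h_2$ is a graded homomorphism and $j(h_1 + h_2) = j(h_1) + j(h_2)$ in $\kkgr(A, D)$. Applied to $h_1 = \iota_1 f$ and $h_2 = \iota_2 g$---orthogonal since they land in disjoint corners of $M_2 B$---and using $j(\iota_1) = j(\iota_2)$, the desired identity follows. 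Assembling everything,
\[
j(f \isum g) = j(\Phi) \circ j(f \oplus g) = j(\iota_1)^{-1} \circ j(\iota_1)\bigl(j(f) + j(g)\bigr) = j(f) + j(g).
\]

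The main obstacle is the orthogonal-additivity step, which does not appear verbatim among the properties of $j$ listed in Section \ref{sec:kkgr}. It is standard in ungraded algebraic bivariant $K$-theory---where the addition on $kk$-morphisms is constructed via orthogonal corner embeddings into $M_\infty$---so transporting it to the graded setting amounts to inspecting the construction of the addition on $kk_{\grAlg}(M_G A, M_G B) = \kkgr(A, B)$ from \cite{kkg} and checking that orthogonal sums represent it. Alternatively, one can produce a polynomial rotation-type homotopy between $\iota_1(h_1 + h_2)$ and $\iota_1 h_1 + \iota_2 h_2$ in $M_2 D$ in the spirit of the argument from \cite{kklpa1}*{Lemma 2.1} invoked during the proof of Proposition \ref{prop:ad}(ii), reducing the claim to the already-established fact that $j(\iota_1) = j(\iota_2)$.
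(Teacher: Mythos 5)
Your argument is correct and takes a genuinely different route for the key computation. The paper reduces the statement to showing $j(\isum)=\nabla$, the codiagonal of the biproduct $B\times B$, and then proves $j(\iota_1\circ\isum)=j(\eps)$ (with $\eps$ the diagonal embedding $B\times B\to M_2B$) by conjugating with Wagoner's $3\times 3$ unitary and invoking Proposition \ref{prop:ad}(i). You instead go the other way around the square: you package $x,y$ into the ``Cuntz sum'' homomorphism $\Phi\colon M_2B\to B$, observe $\Phi\circ(f\oplus g)=f\isum g$, and compute $j(\Phi)=j(\iota_1)^{-1}$ from Proposition \ref{prop:ad}(ii) applied to $\Phi\circ\iota_i=\ad(u_i,u_i^\ast)$. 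This avoids the explicit $3\times 3$ matrix (the rotation is of course still hidden inside Proposition \ref{prop:ad}(ii)) and is arguably cleaner. The one step you flag as an obstacle --- $j(\iota_1 f+\iota_2 g)=j(\iota_1 f)+j(\iota_2 g)$ --- does not require a general orthogonal-additivity principle or an inspection of the construction of the group law on $kk_{\grAlg}$: your map $f\oplus g$ factors as $\eps\circ(f,g)$ through the product $B\times B$, and since $j$ sends finite products to biproducts (the very additivity the paper invokes), $j(\eps)$ is determined by $j(\eps\circ\inc_k)=j(\iota_k)$, giving $j(\eps)=j(\iota_1)\circ\nabla$ because $j(\iota_1)=j(\iota_2)$ by Proposition \ref{prop:incx=incy}. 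Hence $j(f\oplus g)=j(\iota_1)\circ\nabla\circ j((f,g))=j(\iota_1)\bigl(j(f)+j(g)\bigr)$, which is exactly the identity you need, and the rest of your computation goes through.
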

\begin{proof} Since $\kkgr$ is an additive category and 
$j$ an additive function (in the sense that it maps finite 
products to biproducts), it suffices to show that the codiagonal
map $\nabla \in \kkgr(B \times B, B)$ is equal to $j(\isum)$. 

By Proposition \ref{prop:incx=incy}, the inclusions $\iota_1, \iota_2 \colon B \to M_2B$ of $B$ in the top-left and bottom-right corners respectively are mapped to the same arrow in $\kkgr$. Thus, considering
the graded homomorphism
\[
\eps \colon (b_1,b_2) \in B \times B \mapsto \begin{pmatrix} b_1 & 0 \\ 0 & b_2
\end{pmatrix} \in M_2 B,
\] 
we have that $\nabla = j(\iota_1)^{-1} \circ j(\eps)$. In particular,
to prove the proposition  
it suffices to see that $j(\iota_1 \isum) = j(\eps)$.

Let $x,y \in B_{1_G}$ be the homogeneous elements 
that define the graded 
sum $\ast$-algebra structure on $B$. Put
\[
Q = \begin{pmatrix} x & y & 0\\
0 & 0 & x^\ast \\
0 & 0 & y^\ast
\end{pmatrix}
\]
for the matrix considered by Wagoner in \cite{wagoner}*{p. 355}, and set $u = Q^\ast = Q^{-1}$. Note 
that $u$ is a unitary element of $M_3 B$ which is homogeneous
of degree $1 \in G$.
By \cite{kk}*{Lemma 4.8.3}, we have 
\begin{equation}\label{ad:Q}
u\begin{pmatrix} b \isum b' & 0 & 0\\
0 & 0 & 0 \\
0 & 0 & 0
\end{pmatrix}u^\ast
= \begin{pmatrix} b & 0 & 0\\
0 & b' & 0 \\
0 & 0 & 0
\end{pmatrix}.
\end{equation}
In terms of the top-right corner inclusion $\jmath \colon M_2 B \to M_3 B$, equation \eqref{ad:Q} 
says that $\ad(u) \circ \jmath \circ \iota_1 \circ \isum = \jmath \circ \eps$. Applying the $j$ functor and using Proposition \ref{prop:ad},
we see $j(\jmath) \circ j(\iota_1 \isum) = j(\jmath) \circ j(\eps)$.
To conclude, we note that $j(\jmath)$ is an isomorphism by matricial stability.
\end{proof}

\begin{prop} \label{prop:inf=0}
If $A$ is a graded infinite-sum $\ast$-algebra
and $I \triangleleft A$ an ideal such that $I^\infty \subset I$,
then $I$ is $\kkgr$-equivalent to zero.
\end{prop}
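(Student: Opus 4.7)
My plan is to combine an Eilenberg swindle with excision. First I show that $A$ itself is $\kkgr$-equivalent to zero. The defining identity $a \isum a^\infty = a^\infty$ of an infinite-sum $\ast$-algebra translates into $\id_A \isum (-)^\infty = (-)^\infty$ as graded endomorphisms of $A$. Applying Proposition \ref{prop:sum=sum} with $B = A$ yields
\[
j(\id_A) + j((-)^\infty) = j\bigl(\id_A \isum (-)^\infty\bigr) = j((-)^\infty)
\]
in $\kkgr(A, A)$, whence $j(\id_A) = 0$ and $A \simeq 0$ in $\kkgr$.

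Next, I argue the same holds for the quotient $A/I$. Reading $I \triangleleft A$ as a graded $\ast$-closed ideal, the quotient inherits the graded $\ast$-algebra structure, the images $\bar x, \bar y \in (A/I)_{1_G}$ of $x, y$ make it a sum $\ast$-algebra, and the hypothesis $I^\infty \subseteq I$ is exactly what allows the homomorphism $(-)^\infty$ to descend to a self-map of $A/I$ still satisfying the infinite-sum identity. Hence $A/I$ is itself a graded infinite-sum $\ast$-algebra, and the first paragraph gives $A/I \simeq 0$ in $\kkgr$. Then the extension $I \hookrightarrow A \twoheadrightarrow A/I$ (whose projection splits $\ell$-linearly by choosing a basis) produces a distinguished triangle
\[
\Omega(A/I) \to I \to A \to A/I
\]
in $\kkgr$. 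Since both $A$ and $A/I$ are zero in $\kkgr$ (so $\Omega(A/I) \simeq 0$ as well), a standard triangulated-category argument forces $I$ to vanish: $A/I \simeq 0$ makes $I \to A$ an isomorphism, and $A \simeq 0$ then gives $I \simeq 0$.

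The one point that requires care---and the main potential obstacle---is verifying in the second step that the quotient genuinely inherits the infinite-sum $\ast$-algebra structure; this depends on $I$ being closed under the involution, which is implicit in calling it an ideal of a $\ast$-algebra in this setting. If one only assumes a two-sided (non-$\ast$) ideal, the argument can still be salvaged by working directly with $I$: the conjugation-by-$u$ calculation in the proof of Proposition \ref{prop:sum=sum} can be carried out inside $M_3 A$ rather than $M_3 I$, invoking Proposition \ref{prop:ad}(i) together with matricial and $G$-stability to conclude $j(\id_I) + j((-)^\infty|_I) = j((-)^\infty|_I)$, and therefore $I \simeq 0$. Otherwise, everything is essentially formal from Proposition \ref{prop:sum=sum} and the triangulated structure of $\kkgr$.
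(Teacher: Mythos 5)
Your fallback argument in the last paragraph is essentially the paper's proof, and it is the one that actually works: the paper shows that the Wagoner conjugation $\ad(u)$ with $u\in M_3A$ restricts to a graded homomorphism $M_3I\to M_3I$ (only two-sidedness of $I$ is needed, not $\ast$-closedness), so that $\isum$ restricted to $I\times I$ is the codiagonal of $I$ in $\kkgr$; then $I^\infty\subset I$ lets one restrict $(-)^\infty$ and run the swindle $j(1_I)=j(1_I)+j((-)^{\infty})-j((-)^{\infty})=0$ directly on $I$. So the content is there.

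Your primary route, however, has a genuine gap beyond the $\ast$-closedness issue you flagged. You pass through the extension $I\to A\to A/I$ and assert that the projection ``splits $\ell$-linearly by choosing a basis.'' At this point of the paper $\ell$ is only a commutative unital ring (the field hypothesis enters much later, in Convention \ref{conv:regufield}), and the paper's notion of extension \emph{requires} an $\ell$-linear section; for a general ideal $I\triangleleft A$ the quotient $A\to A/I$ need not split as a map of $\ell$-modules (already $\Z\to\Z/2$ over $\ell=\Z$ fails), so you do not automatically get a distinguished triangle in $\kkgr$. Together with the need for $I^\ast=I$ to make $A/I$ a $\ast$-algebra, this means the quotient argument proves strictly less than the stated proposition, and only under extra hypotheses. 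The direct argument on $I$ avoids both problems and is both shorter and more general; I would discard the detour through $A/I$ and keep only the last paragraph, fleshed out.
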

\begin{proof} By Proposition \ref{prop:ad}, the matrix $u$
given in the proof of Proposition \ref{prop:sum=sum} determines 
a graded $\ast$-homomorphism $\ad(u) \colon M_3 I \to M_3 I$
representing the identity of $M_3 I$. Hence the same argument 
as in loc. cit. shows that the restriction 
$\isum' \colon  I \times I \to I$ of $\isum$ to $I$ is the codiagonal
map of $I$. Since $I^\infty \subset I$, we can also restrict
$(-)^\infty$ to a map $(-)^{\infty'} \colon I \to I$ satisfying
$(-)^{\infty'} = 1_I \isum' (-)^{\infty'}$. It follows that 
$j((-)^{\infty'}) = j(1_I \isum' (-)^{\infty'}) = j(1_I) + j((-)^{\infty'})$;
this goes to show that $j(1_I) = 0$ and therefore $j(I) = 0$.
\end{proof}

In the ungraded setting, an example of an infinite-sum $\ast$-algebra
is Karoubi's cone 
$\Gamma_X$ for any infinite set $X$ (\cite{arcor}*{Equation 2.2}, see also \cite{kk}*{Lemma 4.8.2}). We wish
to prove a similar statement for its graded analogue 
\[
\Gamma_X^\circ = \mathsf{span}_\ell\{f \in \Gamma_X : |x| |f(x,y)||y|^{-1} \text{ is constant whenever $f(x,y) \neq 0$}\}.
\]
Notice that this algebra is graded by setting $(\Gamma^\circ_X)_g = 
\mathsf{span}_\ell\{f \in \Gamma_X : |x| |f(x,y)||y|^{-1} = g \text{ if $f(x,y) \neq 0$.}\}$. It contains $M_X$ as a homogeneous ideal; the quotient $\Gamma_X^\circ/M_X$ is denoted by $\Sigma_X^\circ$. 

A non empty graded set $(X,d)$ with degree map $d \colon X \to G$ is said to be \emph{graded infinite} if for all $g \in G$ the set $X_g := d^{-1}(g)$ is either empty or infinite.

\begin{prop} \label{prop:grbij}
If $X$ is graded infinite, then there exists 
a graded bijection $X  \sqcup X \xto{\sim} X$.
\end{prop}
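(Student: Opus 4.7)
The plan is to reduce the problem to a degree-by-degree construction. A graded bijection $\phi \colon X \sqcup X \to X$ is by definition a degree-preserving bijection, so specifying $\phi$ amounts to specifying, for each $g \in G$, a bijection $\phi_g \colon (X \sqcup X)_g \to X_g$. Since disjoint union respects the grading, $(X \sqcup X)_g = X_g \sqcup X_g$, so the task reduces to producing bijections $X_g \sqcup X_g \to X_g$ for every $g \in G$.

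First I would fix $g \in G$ and split into cases. If $X_g = \emptyset$, then $X_g \sqcup X_g = \emptyset$ and the unique empty function serves as $\phi_g$. If $X_g$ is nonempty, the hypothesis that $X$ is graded infinite forces $X_g$ to be infinite, and then the standard set-theoretic fact $|X_g \sqcup X_g| = |X_g|$ (which follows from $|A| + |A| = |A|$ for infinite cardinals, a consequence of the axiom of choice) provides a bijection $\phi_g \colon X_g \sqcup X_g \to X_g$.

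Finally, I would assemble $\phi := \bigsqcup_{g \in G} \phi_g$. Since each $\phi_g$ preserves the fiber of $d$ over $g$ and the family $\{X_g\}_{g \in G}$ partitions $X$, the map $\phi$ is a well-defined bijection $X \sqcup X \to X$ that sends elements of degree $g$ to elements of degree $g$, i.e.\ a morphism of graded sets, hence a graded bijection.

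There is no real obstacle here: the only conceptual ingredient is the classical cardinal identity for infinite sets, and the graded hypothesis has been designed precisely so that this identity can be invoked component by component. The proof is essentially a one-line reduction to the ungraded statement applied to each nonempty fiber $X_g$.
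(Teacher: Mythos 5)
Your proof is correct and follows essentially the same route as the paper: both reduce to the fibers $X_g$, handle the empty case trivially, and invoke the cardinal identity $|A \sqcup A| = |A|$ for infinite $A$ (which the paper phrases as a pair of injections $\sigma_g, \tau_g$ with $\sigma_g \sqcup \tau_g$ bijective) before assembling the fiberwise bijections into a graded one.
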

\begin{proof} If $X$ is graded infinite, then each component $X_g$
is either infinite or empty and thus there exist injections $\sigma_g, 
\tau_g \colon X_g \to X_g$ such that $\sigma_g \sqcup \tau_g \colon X_g \sqcup X_g \to X_g$ is a bijection. It follows that $\sigma = \sqcup_{g \in G} \sigma_g$ and $\tau = \sqcup_{g \in G} \tau_g$ assemble into the desired bijection.
\end{proof}

\begin{prop}[cf. \cite{kk}*{Lemma 4.8.2}] If $X$ is graded infinite, then $\Gamma_X^\circ$
is a graded infinite-sum $\ast$-algebra.
\end{prop}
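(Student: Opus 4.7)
The plan is to adapt the proof of \cite{kk}*{Lemma 4.8.2} to the graded setting, using the graded partition of $X$ supplied by Proposition \ref{prop:grbij}.

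First I would fix degree-preserving injections $\sigma, \tau \colon X \to X$ with $\sigma(X) \sqcup \tau(X) = X$, obtained by restricting the graded bijection $X \sqcup X \iso X$, and set
\[
x := \sum_{z \in X} \elmat_{\sigma(z), z}, \qquad y := \sum_{z \in X} \elmat_{\tau(z), z}.
\]
Each summand is homogeneous of degree $|\sigma(z)||z|^{-1} = 1_G$, and injectivity of $\sigma, \tau$ ensures that each row and column has at most one non-zero entry, so $x, y \in (\Gamma_X^\circ)_{1_G}$. With the $\ast$-involution that transposes indices, injectivity gives $x^\ast x = y^\ast y = \sum_z \elmat_{z,z} = 1$, while $\sigma(X) \sqcup \tau(X) = X$ gives $xx^\ast + yy^\ast = 1$ and the orthogonality relation $x^\ast y = 0$. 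This makes $\Gamma_X^\circ$ a graded sum $\ast$-algebra.

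Next I would define the infinite-sum operation. Formally iterating the defining relation $a \isum a^\infty = x a x^\ast + y a^\infty y^\ast$ suggests setting $w_n := y^n x$ and
\[
a^\infty := \sum_{n \geq 0} w_n\, a\, w_n^\ast.
\]
A direct computation shows $w_n = \sum_z \elmat_{\tau^n \sigma(z), z}$, and from $x^\ast y = 0$ one obtains $w_n^\ast w_m = \delta_{n,m}\cdot 1$, while the ranges $\tau^n \sigma(X)$ of the pairwise orthogonal projections $w_n w_n^\ast$ partition $X$. The main obstacle is showing that this formal sum actually defines an element of $\Gamma_X^\circ$: using the disjointness of the iterates $\tau^n \sigma(X)$, for each $u \in X$ there is at most one pair $(n, z)$ with $u = \tau^n \sigma(z)$, so $(a^\infty)(u, v)$ vanishes unless $u, v$ lie in a common component $\tau^n \sigma(X)$, in which case it equals $a(z, z')$. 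From this description I would read off that each row and column of $a^\infty$ has at most as many non-zero entries as the corresponding row or column of $a$, that $\im(a^\infty) = \im(a)$, and that the degrees of non-zero entries are preserved by the composites $\tau^n \sigma$; hence $a^\infty \in \Gamma_X^\circ$ and $(-)^\infty$ respects the grading.

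To conclude, I would verify the infinite-sum axioms. The map $(-)^\infty$ is clearly $\ell$-linear and $\ast$-preserving; multiplicativity reduces via $w_n^\ast w_m = \delta_{n,m}$ to
\[
a^\infty b^\infty = \sum_{n,m \geq 0} w_n a (w_n^\ast w_m) b w_m^\ast = \sum_{n \geq 0} w_n (ab) w_n^\ast = (ab)^\infty.
\]
The required identity follows from a reindexing:
\[
a \isum a^\infty = w_0 a w_0^\ast + \sum_{n \geq 0} w_{n+1} a w_{n+1}^\ast = \sum_{n \geq 0} w_n a w_n^\ast = a^\infty.
\]
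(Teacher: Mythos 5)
Your proposal is correct and follows essentially the same route as the paper: the same graded injections $\sigma,\tau$ from Proposition \ref{prop:grbij} give the sum structure, and the same series $\sum_{n\ge 0} v^n u\,a\,u^\ast (v^n)^\ast$ (your $w_n = y^n x$) defines $(-)^\infty$, with the disjointness of the iterates $\tau^n\sigma(X)$ playing the role of the paper's key injectivity observation. You merely spell out the verifications (well-definedness in $\Gamma_X^\circ$, multiplicativity, the reindexing identity) that the paper leaves as a ``direct verification.''
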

\begin{proof} In view of Proposition \ref{prop:grbij},
we may consider a graded bijection $X \sqcup X \to X$
induced by graded injections $\sigma, \tau \colon X \to X$, 
with disjoint image, such that $X = \im(\tau) \sqcup \im(\sigma)$.
A direct verification shows that the elements
\[
u = \sum_{x \in X} \elmat_{\sigma(x),x}, \qquad v \in \sum_{x \in X} \elmat_{\tau(x),x}
\]
make $\Gamma_X^\circ$ into a graded sum $\ast$-algebra.

Next we will show that, for every $x,y \in X$,
\begin{equation}\label{inj-sigma-tau}
\tau^n(\sigma(x)) = \tau^m(\sigma(y)) \iff n = m, \quad x = y.
\end{equation}
Indeed, suppose without loss of generality that $n = m+k$
for some $k \geq 0$. By injectivity of $\tau^m$, we 
would have that $\tau^k(\sigma(x)) = \sigma(y)$. 
Since the images of $\tau$ and $\sigma$ are disjoint, 
it must be $k = 0$ and hence $n = m$. Finally, the injectivity of $\sigma$
lets us deduce that $x = y$.

From \eqref{inj-sigma-tau} we see that, for each $z \in \Gamma_X^\circ$, there is a well defined element of $\Gamma_X^\circ$ given by
\[
z^\infty := \sum_{n \ge 0} v^n u z u^\ast (v^n)^\ast
= \sum_{n \ge 0, x,y \in X} \elmat_{\tau^n(\sigma(x)),x} \cdot z \cdot \elmat_{y,\tau^n(\sigma(y))}
= \sum_{n \ge 0, x,y \in X} z(x,y)\cdot \elmat_{\tau^n(\sigma(x)),\tau^n(\sigma(y))}.
\]
By definition $z \mapsto z^\infty$ is an algebra homomorphism 
and makes $\Gamma_X^\circ$ into a graded infinite-sum 
$\ast$-algebra as desired.
\end{proof}

We now apply the definition of graded infinity sum $\ast$-algebra to 
a graded analogue of Karoubi's cone and the 
resulting suspension algebra.

\begin{coro} \label{coro:gamma=0}
If $X$ is a graded infinite set, 
then $\Gamma_X^\circ$ is $\kkgr$-equivalent to zero.
\end{coro}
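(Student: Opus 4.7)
The plan is to deduce this directly from Proposition \ref{prop:inf=0}, which I will apply with $I = A = \Gamma_X^\circ$. By the previous proposition, $\Gamma_X^\circ$ is a graded infinite-sum $\ast$-algebra under the operations $\isum$ and $(-)^\infty$ built from the graded injections $\sigma, \tau \colon X \to X$. In particular, the infinite-sum operation is defined as a graded $\ast$-homomorphism $(-)^\infty \colon \Gamma_X^\circ \to \Gamma_X^\circ$, so tautologically its image lies inside $\Gamma_X^\circ$, that is, $(\Gamma_X^\circ)^\infty \subset \Gamma_X^\circ$.

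Hence the hypothesis of Proposition \ref{prop:inf=0} is satisfied for the ideal $I = \Gamma_X^\circ$ of itself, and we conclude that $j(\Gamma_X^\circ) = 0$ in $\kkgr$, i.e. $\Gamma_X^\circ$ is $\kkgr$-equivalent to zero.

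There is no real obstacle here beyond verifying that the previous proposition genuinely applies with the full algebra in the role of the ideal; since $(-)^\infty$ is defined on all of $\Gamma_X^\circ$ and takes values in $\Gamma_X^\circ$, this is automatic. In effect, the content of this corollary is already packaged into the preceding proposition, and the hard work was to produce the graded infinite-sum structure on $\Gamma_X^\circ$ using Proposition \ref{prop:grbij} to split $X$ into two disjoint graded copies of itself.
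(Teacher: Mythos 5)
Your proof is correct and matches the paper's own argument exactly: the paper also deduces the corollary by applying Proposition \ref{prop:inf=0} with $I = A = \Gamma_X^\circ$, where the hypothesis $I^\infty \subset I$ holds trivially. No further comment is needed.
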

\begin{proof} Apply Proposition \ref{prop:inf=0} to $I = A = \Gamma_X^\circ$.
\end{proof}

Note that if $(X,d)$ is any graded set, then 
\[
\gc{X} := X \times \N, \qquad d(x,n) = d(x)
\]
is graded infinite and there is a canonical inclusion 
$x \in X \mapsto (x,0) \in \gc{X}$.

\begin{defn} Let $X$ be a graded set such that that $X_{1_G} \neq \emptyset$. Define 
\[
\Gamma^{\gr}_X := \Gamma^\circ_{\gc{X}}, \qquad \Sigma^{\gr}_X := \Sigma^\circ_{\gc{X}}, \qquad M^{\gr}_X := M_{\gc{X}}.
\]
\end{defn}

\begin{coro} \label{coro:ungrad=Xhat}
Let $X$ be a graded set such that $X_{1_G} \neq \emptyset$ and let $x \in X_{1_G}$.
The graded inclusion $i_x \colon k \in \N \mapsto (x,k) \in \gc{X}$ induces 
algebra monomorphisms $M_\infty \hookrightarrow M^{\gr}_X$, $\Sigma \hookrightarrow \Sigma^{\gr}_X$, and $\Gamma \hookrightarrow \Gamma^{\gr}_X$
which are $\kkgr$-isomorphisms.
\end{coro}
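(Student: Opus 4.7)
My plan is to realize both triples as cone extensions, apply $j$ to obtain a morphism of distinguished triangles in $\kkgr$, and then use that the two middle terms vanish in $\kkgr$ to reduce to a single $G$-stability check.

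First I would verify that $i_x$ is an injective morphism of graded sets: since $|x|=1_G$, every element of its image lies in degree $1_G$. Functoriality of $M_{(-)}$, $\Gamma^\circ_{(-)}$ and $\Sigma^\circ_{(-)}$ with respect to injections of graded sets then assembles the three inclusions into a morphism of cone extensions with top row $\fK$---here I use the identifications $M_\infty = M_{|\N|}$, $\Gamma = \Gamma^\circ_{|\N|}$ and $\Sigma = \Sigma^\circ_{|\N|}$, which are valid because the homogeneity condition defining $\Gamma^\circ$ is vacuous when the indexing set is concentrated in a single degree---and with bottom row the analogous cone extension $M_X^{\gr} \to \Gamma_X^{\gr} \to \Sigma_X^{\gr}$ for $\gc X$.

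Applying $j$ gives a morphism of distinguished triangles in $\kkgr$. Both $|\N|$ and $\gc X$ are graded infinite---each non-empty component is countable---so Corollary \ref{coro:gamma=0} identifies $\Gamma$ and $\Gamma^{\gr}_X$ with the zero object of $\kkgr$. Consequently the middle vertical arrow is trivially a $\kkgr$-iso, the two left boundary maps are $\kkgr$-isomorphisms, and by the triangulated five lemma the entire claim reduces to showing that the inclusion $\alpha\colon M_\infty \to M^{\gr}_X$ is a $\kkgr$-isomorphism.

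For this last step I would factor $i_x$ as a graded bijection $|\N| \iso i_x(|\N|)$---both are countable sets concentrated in degree $1_G$---followed by the graded subset inclusion $i_x(|\N|) \hookrightarrow \gc X = i_x(|\N|) \sqcup (\gc X \setminus i_x(|\N|))$. The first factor induces a canonical isomorphism $M_\infty \cong M_{i_x(|\N|)}$, and the second is exactly the shape of inclusion inverted by the $G$-stability axiom of $j$. The only real subtlety, and what I see as the main obstacle, is the cardinality bookkeeping required by $G$-stability: both $i_x(|\N|)$ and its complement must have cardinality at most $\beth = \max\{\aleph_0, |G|\}$, which holds whenever $|X| \leq \beth$, the implicit setting of all intended applications. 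With this in hand, $\alpha$ is a $\kkgr$-iso and the five lemma delivers the remaining two.
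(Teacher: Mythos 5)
Your proposal is correct and follows essentially the same route as the paper: form the morphism of cone extensions induced by $i_x$ (using $\Gamma^\circ_{|\N|}=\Gamma$, $\Sigma^\circ_{|\N|}=\Sigma$), note that both middle terms vanish by Corollary \ref{coro:gamma=0}, invoke $G$-stability for the matrix-algebra inclusion, and conclude via the two-out-of-three property for morphisms of distinguished triangles. Your extra care about factoring $i_x$ through $M_{i_x(|\N|)}$ and about the cardinality bound $\beth$ only makes explicit what the paper compresses into the phrase ``graded matricial stability.''
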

\begin{proof} Since $\Gamma_\N^\circ = \Gamma_\N = \Gamma$ and $\Sigma_\N^\circ = \Sigma_\N = \Sigma$, the maps induced by the inclusion $i_x$ yield a diagram of cone extensions (and thus triangles) as follows:
\[
\begin{tikzcd}
M_\infty \arrow{r}\arrow{d} & \Gamma \arrow{r} \arrow{d} & \Sigma \arrow{d}\\
M_{\gc{X}} \arrow{r} & \Gamma^\circ_{\gc{X}} \arrow{r} & \Sigma^\circ_{\gc{X}}
\end{tikzcd}
\]
The leftmost vertical arrow is a $\kkgr$-equivalence by graded matricial stability. The vertical arrow in the middle is a $\kkgr$-equivalence because both its domain
and codomain are $\kkgr$-equivalent to zero. It follows, using that $\kkgr$ is a triangulated category,
that the rightmost vertical arrow is a $\kkgr$-equivalence.
\end{proof}

\subsection{Graded suspensions and deloopings}
A family of boundary maps that will be of interest to us 
come from graded cone extensions. Given a graded set $X$, we have an extension
\[
\Omega \Sigma^{\gr}_X \xto{\partial_X} M^{\gr}_X \to \Gamma^{\gr}_X \to \Sigma^{\gr}_X.
\]
Using that the inclusion $\inc^{\gr}_X \colon M_X \hookrightarrow M^{\gr}_X$ is
a $\kkgr$-isomorphism, we obtain a triangle:
\begin{equation}\label{ext:infcirc}
\Omega \Sigma^{\gr}_X \xto{(\inc^{\gr}_X)^{-1} \circ \partial_X} M_{X} \to \Gamma^{\gr}_X \to \Sigma^{\gr}_X.
\end{equation}
Since $\Gamma^{\gr}_X = 0$ in $\kkgr$ by Corollary \ref{coro:gamma=0}, 
it follows that the map $(\inc^{gr}_X)^{-1} \circ \partial_X$ is an isomorphism. In particular, by matricial stability, for any $x \in X_1$ we have
an isomorphism
\begin{equation}\label{def:partial-circ}
\partial_X^{\gr} := 
\Omega \Sigma^{\gr}_X \xto{(\inc^{\gr}_X)^{-1} \circ \partial_X} M_X \xto{j(\iota_x)^{-1}} \ell.
\end{equation}

We may describe $\partial_X^{\gr}$ more explicitly, in the same way as for
the ungraded cone extension.

\begin{prop} \label{prop:sLx}
Let $X$ be a graded infinite set such that $X_{1_G} \neq \emptyset$
and $x \in X_{1_G}$. Write $L_x$ for the class 
of $\sum_{i \ge 1} \elmat_{(x,i), (x,i+1)}$ in $\Sigma^{\gr}_X$
and $\xi_{L_x} \colon \ell \to \Omega \Sigma$ for the map corresponding 
to $[L_x] \in KH_1^{\gr}(\Sigma^{\gr}_X)$.
The following diagram commutes in $\kkgr$:
\[
\begin{tikzcd}
\ell \arrow{r}{\xi_{L_x}} \arrow{dr}[below]{\inc_{(x,1)} \qquad \quad }& \Omega \Sigma^{\gr}_X \arrow{d}{\partial^{\gr}_X}  \\ & M_X^{\gr}
\end{tikzcd}
\]
\end{prop}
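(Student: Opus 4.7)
The plan is to reduce to the ungraded analogue (Lemma \ref{lem:partial-s=e11}) via naturality along the morphism of graded sets
\[
i_x\colon \N \lra \gc{X}, \qquad k \longmapsto (x,k),
\]
with $\N$ carrying the trivial grading. As in the proof of Corollary \ref{coro:ungrad=Xhat}, $i_x$ induces a morphism of extensions from the trivially graded cone extension $\mathfrak K$ to the graded one $M^{\gr}_X \to \Gamma^{\gr}_X \to \Sigma^{\gr}_X$,
\[
\begin{tikzcd}[column sep=small]
M_\infty \arrow{r} \arrow{d}{\alpha} & \Gamma \arrow{r} \arrow{d} & \Sigma \arrow{d}{\beta} \\
M^{\gr}_X \arrow{r} & \Gamma^{\gr}_X \arrow{r} & \Sigma^{\gr}_X,
\end{tikzcd}
\]
where $\alpha(\elmat_{i,j}) = \elmat_{(x,i),(x,j)}$ and $\beta$ is induced on quotients. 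Combining Theorem \ref{thm:triv-boundary} with the naturality of boundary maps that is part of the definition of an excisive homology theory, one obtains the identity
\[
\partial_X \circ \Omega\beta \;=\; \alpha \circ \partial_{\mathfrak K}
\]
in $\kkgr$, where $\partial_X$ denotes the boundary of the graded cone extension.

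The second step is to verify the naturality identity $\xi_{L_x} = \Omega\beta \circ \triv(\xi_L)$. Unwinding the definition of $L$, we have $L = [s_0^\ast] = \bigl[\sum_{i \ge 1} \elmat_{i, i+1}\bigr] \in \Sigma$, so $\beta$ sends $L$ to $\bigl[\sum_{i \ge 1} \elmat_{(x,i),(x,i+1)}\bigr] = L_x$. Because $\beta$ is a graded algebra map and the correspondence between homogeneous degree-zero units of a suspension algebra and their associated $\kkgr$-morphisms $\xi_{(-)}$ is natural in the algebra, the desired identity follows.

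Putting these two ingredients together with Lemma \ref{lem:partial-s=e11}, I compute
\begin{align*}
\partial_X \circ \xi_{L_x}
  &= \partial_X \circ \Omega\beta \circ \triv(\xi_L) \\
  &= \alpha \circ \partial_{\mathfrak K} \circ \triv(\xi_L) \\
  &= \alpha \circ \inc_1 \\
  &= \inc_{(x,1)},
\end{align*}
establishing the proposition. The step I expect to require the most care is the naturality statement $\xi_{L_x} = \Omega\beta \circ \triv(\xi_L)$: one must track how a $KH_1^{\gr}$-class on a graded suspension algebra is represented as a morphism to the corresponding loop algebra in $\kkgr$, and check that this representation is compatible with the graded homomorphism $\beta$. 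This is standard but slightly tedious bookkeeping, and is the only non-routine piece of the argument.
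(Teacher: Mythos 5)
Your argument is correct and follows exactly the route of the paper's own (very terse) proof: the morphism of extensions induced by $i_x$, the compatibility of boundary maps with that morphism, and Lemma \ref{lem:partial-s=e11}. You have simply written out the details the paper leaves to the reader, namely the identities $\partial_X \circ \Omega\beta = \alpha \circ \partial_{\mathfrak K}$ and $\xi_{L_x} = \Omega\beta \circ \triv(\xi_L)$, and both are verified correctly.
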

\begin{proof} The morphism of extensions between 
the ungraded cone extension \eqref{ext:cone} and
extension \eqref{ext:infcirc} given by inclusions, as 
in Corollary \ref{coro:ungrad=Xhat}, extends 
to a morphism of triangles expressing 
$\partial^{\gr}_X$ in terms of $\partial$.
The conclusion 
now follows from Lemma \ref{lem:partial-s=e11}; we
leave the details to the reader.
\end{proof}

\begin{rmk} If $X = \ast$ with $|\ast| = 1_G$, then $\gc{X}$
is isomorphic to $\N$ with trivial grading. 
The inclusion $\inc_X^{\gr}$ corresponds to the inclusion $\iota_1 \colon \ell \to M_\infty$, and \eqref{ext:infcirc}
to the triangle $\ell \to \Gamma \to \Sigma$. 
Hence, the boundary \eqref{def:partial-circ} recovers
the isomorphism $\Omega\Sigma \xto{\partial} M_\infty \xto{\inc_1^{-1}} \ell$ and Proposition \ref{prop:sLx}
recovers \ref{lem:partial-s=e11} as a particular case.
\end{rmk}

\section{Units as morphisms}

The purpose of this section is to represent 
elements of $KH_1$ and $KH_1^{\gr}$ coming 
from units as certain arrows in the corresponding bivariant $K$-theory 
category. We first give a representation for units in ungraded algebras
as maps in $kk$. Next,
we use these results to deduce a representation in $\kkgr$
for homogeneous units of degree $1_G$ of strongly graded rings.

\subsection{Non-homogeneous units}

\begin{defn}\label{defn:cS}
Let $\cS := \ker(\ell[t,t^{-1}] \xto{\ev_1} \ell)$.
Note that a
homomorphism $\ell[t,t^{-1}] \to A$ corresponds to 
the choice of an idempotent $p:=\phi(1) \in A$ 
and a unit in $pAp$, namely $u:=\phi(t)$. We write
$\phi_{p,u}$ for such a homomorphism and $\nu_{p,u}$
for its restriction to $\cS$. If $A$ is unital and $u$ is a unit in $A$, we put
$\phi_u := \phi_{1,u}$ and $\nu_u := \nu_{1,u}$. 
\end{defn}

\begin{defn} \label{defn:unitmaps}
Let $A$ be a unital algebra. A unit $u \in A^\times$
determines a class $[u] \in K_1(A)$ which, via the canonical comparison map, 
determines an element in $KH_1(A)$ which we also call $[u]$.
We define $\xi_u \colon \ell \to \Omega $ to be 
the homomorphism corresponding to $[u] \in KH_1(A)$
via the isomorphism $KH_1(A) \simeq kk(\ell, \Omega A)$.
Since $kk(\ell, \Omega(-)) \simeq KH_1(-)$, it follows that for any 
non-necessarily unital map $f \colon R \to S$ between unital algebras we have:
\begin{equation}\label{eq:omega-xi=xi}
\Omega(j(f)) \circ \xi_u = \xi_{1-f(1)+f(u)}.
\end{equation}
In particular, if $A$ is a unital algebra then applying \eqref{eq:omega-xi=xi} to the unit $t \in \ell[t,t^{-1}]$ and any map $\phi_{p,u} \colon \ell[t,t^{-1}] \to A$ gives
\begin{equation}\label{Omegaphi-xi}
\Omega(j(\phi_{p,u})) \xi_t = \xi_{1+p-u}.
\end{equation}
\end{defn}

By \cite{kk}*{Section 4.10 and proof of Theorem 7.3.1} we know that $\nu_L \colon \cS \to \Sigma$ is an isomorphism. Upon tensoring by $\nu_L$, we 
obtain a natural isomorphism $\cS \otimes - \cong  \Sigma \otimes -$. This allows for the following definition.

\begin{defn} Put
\begin{equation}\label{map:wpnat}
\wp_{A,B} :=  \kkgr(\cS A, B)\xto{ (\nu_L^{-1} \otimes A)^\ast}  
kk(\Sigma A, B) \xto{\cU_{A,B}} kk(B, \Omega A).
\end{equation} 
and 
\begin{equation}\label{map:wp}
\wp := \wp_{\ell,\cS}(\id_\cS) = \Omega(\nu_L^{-1}) \xi_L.
\end{equation}
\end{defn}

The map \eqref{map:wpnat}
allows us to represent classes of units in $KH_1$ as classes of algebra homomorphisms in $kk$:

\begin{thm} \label{thm:rep-units}
Let $A$ be a unital algebra. The 
natural chain of isomorphisms
\[
kk(\cS, A) \xto{\wp_{\ell, A}} kk(\ell, \Omega A) \simeq KH_1(A)
\]
maps $j(\nu_{p,u})$
to the class of the unit $1-p+u \in A^\times$ in $KH_1(A)$.
\end{thm}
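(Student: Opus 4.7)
The plan is to unfold the chain of isomorphisms defining $\wp_{\ell,A}$, factor $\nu_{p,u}$ through the ideal inclusion $\cS \hookrightarrow \ell[t,t^{-1}]$, and reduce the statement to a single identity in $KH_1(\ell[t,t^{-1}])$. Writing $i \colon \cS \hookrightarrow \ell[t,t^{-1}]$ for the ideal inclusion, unfolding $\cU_{\ell,A}$ with $\mathfrak u_\ell = \xi_L$ and using $\nu_{p,u} = \phi_{p,u} \circ i$ gives
\[
\wp_{\ell,A}(j(\nu_{p,u})) = \Omega(j(\nu_{p,u})) \circ \wp = \Omega(j(\phi_{p,u})) \circ \bigl(\Omega(j(i)) \circ \wp\bigr).
\]
If I can verify the key identity $\Omega(j(i)) \circ \wp = \xi_t$, then \eqref{eq:omega-xi=xi} applied to $\phi_{p,u}$ and the unit $t$ yields
\[
\wp_{\ell,A}(j(\nu_{p,u})) = \Omega(j(\phi_{p,u})) \circ \xi_t = \xi_{1-p+u},
\]
which represents $[1-p+u] \in KH_1(A)$ by Definition \ref{defn:unitmaps}, concluding the proof.

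To verify the key identity, I would expand $\wp = \Omega(\nu_L^{-1}) \circ \xi_L$, so that it becomes $\Omega(\rho) \circ \xi_L = \xi_t$ with $\rho := j(i) \circ \nu_L^{-1} \in kk(\Sigma, \ell[t,t^{-1}])$. Under the natural isomorphism $kk(\ell, \Omega(-)) \cong KH_1(-)$, which takes $\xi_L$ to $[L]$ and $\xi_t$ to $[t]$, this amounts to showing $KH_1(\rho)([L]) = [t]$. For this, the crucial observation is that the extension $\cS \xrightarrow{i} \ell[t,t^{-1}] \xrightarrow{\ev_1} \ell$ splits in $\Alg$ via the unit inclusion $\ell \to \ell[t,t^{-1}]$, so in $kk$ one gets a direct sum decomposition $\ell[t,t^{-1}] \cong \cS \oplus \ell$ and hence a split short exact sequence
\[
0 \to KH_1(\cS) \xrightarrow{KH_1(i)} KH_1(\ell[t,t^{-1}]) \xrightarrow{KH_1(\ev_1)} KH_1(\ell) \to 0.
\]
Since $\ev_1(t) = 1$, there is a unique $\tau \in KH_1(\cS)$ with $KH_1(i)(\tau) = [t]$. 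Using $\phi_L \circ i = \nu_L$ together with \eqref{eq:omega-xi=xi} applied to $\phi_L$, one computes $KH_1(\nu_L)(\tau) = KH_1(\phi_L)([t]) = [L]$; since $\nu_L$ is a $kk$-isomorphism, this forces $\tau = KH_1(\nu_L)^{-1}([L])$, whence $KH_1(\rho)([L]) = KH_1(i)(\tau) = [t]$.

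The main obstacle is precisely the identity $KH_1(\rho)([L]) = [t]$: it mediates between the class of $L \in \Sigma^\times$ and the class of $t \in \ell[t,t^{-1}]^\times$, routed through the non-unital algebra $\cS$, where talking about classes of units is more delicate. The splitting provided by $\ev_1$ is the key device, as it simultaneously gives injectivity of $KH_1(i)$ and exhibits $[t]$ as lying in its image; once this is in place, the $kk$-equivalence $\nu_L$ transports $[L]$ to the correct element.
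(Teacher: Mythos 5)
Your proof is correct, and its skeleton is the same as the paper's: both arguments reduce the theorem, via $\nu_{p,u}=\phi_{p,u}\circ i$ and \eqref{eq:omega-xi=xi}, to the single identity $\Omega(j(i))\circ\wp=\xi_t$, which in turn amounts to exhibiting a class $\tau\in KH_1(\cS)$ with $KH_1(i)(\tau)=[t]$; the computation $KH_1(\nu_L)(\tau)=KH_1(\phi_L)([t])=[L]$ together with the invertibility of $j(\nu_L)$ then finishes the argument exactly as you describe. Where you genuinely diverge is in how that preimage of $[t]$ is produced. The paper does it by hand: it reduces to $K_1$ via the comparison map, works in the unitalization $U$ of $\cS$, checks that $x=t-1$ and $y=t^{-1}-1$ satisfy $xy+x+y=0$ (so $x+1_U$ is a unit of $U$), and verifies that under the identification $K_1(\ell[t,t^{-1}])\cong\ker(K_1(U')\to K_1(\ell))$ the class $[t]$ corresponds to $[x+1_{U'}]$. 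You instead observe that the extension $\cS\xto{i}\ell[t,t^{-1}]\xto{\ev_1}\ell$ is split by the unit inclusion, so excision gives $\im KH_1(i)=\ker KH_1(\ev_1)$, and $[t]$ dies under $\ev_1$ because $\ev_1(t)=1$. Your route is cleaner and dispenses with the unitalization bookkeeping; the paper's version has the marginal advantage of naming the preimage explicitly. One small remark: the uniqueness of $\tau$ that you invoke (injectivity of $KH_1(i)$) is not actually needed --- mere existence of a preimage suffices, since the injectivity of $KH_1(\nu_L)$ already forces $\tau=KH_1(\nu_L)^{-1}([L])$, which is all the last step uses.
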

\begin{proof} Write 
$i \colon \cS \to \ell[t, t^{-1}]$ for the inclusion.
We want to show that $\xi_{1-p+u} = \Omega(\phi_{p,u}) \circ \xi_t$ agrees with
\[
\Omega(\nu_{p,u}) \Omega(j(\nu_L))^{-1} \xi_L =
\Omega(\phi_{p,u}) \Omega(i) \Omega(j(\nu_L))^{-1} \xi_L.
\]
Thus, it suffices to see that 
\[
\Omega(i) \Omega(j(\nu_L))^{-1} \xi_L = \xi_t.
\]
We claim that, to conclude, it suffices to see that $\xi_t$ factors through $\Omega(i)$.
Indeed, suppose that there exists a map $\zeta \colon \ell \to \Omega \cS$
such that $\xi_t = \Omega(i) \zeta$. 
Then 
\[
\xi_L = \Omega(j(\phi_L)) \xi_t = 
\Omega(j(\phi_L)) \Omega(i) \zeta = \Omega(j(\nu_L)) \zeta,
\]
and composing with $\Omega(i)\Omega(j(\nu_L))^{-1}$
on the left to both sides we obtain the desired equality.

Finally, we have to prove the existence of such a morphism $\zeta \colon \ell \to \Omega \cS$, which amounts 
to showing that $[t] \in KH_1(\ell[t, t^{-1}])$ lies in the image of $KH_1(i)$. It suffices to do so substituting 
$K_1$ for $KH_1$. 

Consider the elements $x = (t-1)$, $y = (t^{-1}-1)$ of $\cS$.  A direct computation 
shows that $xy+x+y = 0$, which says that in the unitalization $U$ of $\cS$ the element $x + 1_U$
is a unit with inverse $y + 1_U$. The 
map induced by $i$ on $K_1$ restricts 
to a map between the units of $U$ and those
of the unitalization $U'$ of $\ell[t,t^{-1}]$, 
which maps $x+ 1_U$ to $x+1_{U'}$.
To conclude, we note that the identification of $K_1(\ell[t,t^{-1}])$
with $\ker(K_1(U') \to K_1(\ell))$ maps $t$ to $x+1_{U'}$.
\end{proof}

\subsection{Graded units in strongly graded rings}

We view $\cS$ and $\ell[t,t^{-1}]$ as graded algebras 
via the trivial grading. A graded homomorphism 
$\ell[t,t^{-1}] \to \cS$ corresponds to a homogeneous 
idempotent $p \in A_1$ and a unit $u \in pA_1p$. 
We employ the same notation as in 
Definition \ref{defn:cS} 
for these homomorphisms and their restrictions to $\cS$.

\begin{prop} Let $C$ be a trivially graded algebra and 
$A$ a strongly graded algebra. There is an isomorphism
\[
\kkgr(C,A) \simeq kk(C, A_{1_G}) 
\]
which maps the class of a graded algebra homomorphism
$f \colon C \to A$ to the class of its 
corestriction $f| \colon C \to A_{1_g}$.
\end{prop}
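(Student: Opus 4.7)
My plan is to build the candidate isomorphism from the inclusion of the trivially graded $A_{1_G}$ into $A$. Define
\[
\Phi \colon kk(C, A_{1_G}) \to \kkgr(C, A), \qquad \Phi(\alpha) := j(\iota)\circ \triv(\alpha),
\]
where $\iota \colon A_{1_G} \hookrightarrow A$ is the inclusion viewed as a graded algebra homomorphism (with $A_{1_G}$ trivially graded) and $\triv \colon kk \to \kkgr$ is the canonical triangulated functor of the preceding subsection. I would first verify that $\Phi(j_{kk}(f|)) = j(\iota \circ f|) = j(f)$ for every graded algebra homomorphism $f\colon C \to A$, so that $\Phi^{-1}$ realizes the corestriction rule in the statement.

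The plan to show $\Phi$ is a bijection is to factor it as
\[
kk(C, A_{1_G}) \xto{\triv} \kkgr(C, A_{1_G}) \xto{j(\iota)_*} \kkgr(C, A)
\]
and argue that each arrow is an isomorphism. For the right arrow I would show that $j(\iota)$ is a $\kkgr$-isomorphism. The essential input is Dade's theorem: the base change $A \otimes_{A_{1_G}} -$ yields an equivalence between finitely generated projectives of $A_{1_G}$ and graded finitely generated projectives of $A$, with inverse $(-)_{1_G}$. I would then promote this Morita-like equivalence to the bivariant level, either by constructing an explicit inverse of $j(\iota)$ in $\kkgr$ using the Morita context provided by the strong-grading decompositions $1 = \sum_i a_g^i b_{g^{-1}}^i$, or by arguing that the mapping cone of $j(\iota)$ is trivial in $\kkgr$.

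For the left arrow I would show that $\triv$ induces a bijection $kk(C, D) \iso \kkgr(\triv C, \triv D)$ for trivially graded $C, D$. The cleanest route is via the universal property of $kk$: the functor $B \mapsto \kkgr(\triv C, \triv B)$ on $\Alg$ is homotopy invariant, matricially stable (via $G$-stability of $\kkgr$ applied to trivially graded index sets), and excisive --- the excision compatibility being guaranteed by Theorem~\ref{thm:triv-boundary}, which identifies boundary maps in $kk$ and $\kkgr$ for ungraded extensions. Universality then factors this functor uniquely through $kk(C, -)$, yielding a two-sided inverse to $\triv$.

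The main obstacle will be showing that $j(\iota)$ is a $\kkgr$-isomorphism: Dade's theorem directly gives the $K^{\gr}_*$ and $KH^{\gr}_*$ comparison, but $\kkgr$-equivalence is strictly finer than agreement on these invariants. I expect the key technical step to exploit matricial and $G$-stability in order to contract the $A_{1_G}$--$A$ bimodule $A$, realizing the Morita context of the strong grading as explicit $\kkgr$-data.
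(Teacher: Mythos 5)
Your map $\Phi$ is the correct one, and the check that $\Phi(j_{kk}(f|))=j(f)$ is fine, but the proposed factorization through $\kkgr(C,A_{1_G})$ cannot work: \emph{neither} of the two arrows you intend to invert is an isomorphism. For the left arrow take $C=D=\ell$: since $\kkgr(\ell,-)=KH_0^{\gr}$ (as used in Theorem \ref{thm:uct}), one has $\kkgr(\ell,\ell)=KH_0^{\gr}(\ell)$, which for a field equals $\Z[G]$ (a graded finitely generated projective module over the trivially graded $\ell$ is just a $G$-graded vector space), whereas $kk(\ell,\ell)=KH_0(\ell)=\Z$; so $\triv$ is not fully faithful on trivially graded algebras when $G\neq 1$. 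The universal property of $kk$ only yields the \emph{existence and uniqueness} of the factorizing functor $\triv$ (cf.\ Theorem \ref{thm:triv-boundary}); it says nothing about the induced maps on hom-sets being bijections. For the right arrow take $G=\Z$ and $A=\ell[x,x^{-1}]$ with $|x|=1$, which is strongly graded with $A_0=\ell$: then $KH_0^{\gr}(\triv(A_0))=\Z[\sigma]$ while $KH_0^{\gr}(A)\cong KH_0(A_0)=\Z$ by \eqref{map:dade}, and $j(\iota)$ induces the augmentation, so it is not a $\kkgr$-isomorphism. The conceptual point is that Dade's theorem is an equivalence between \emph{ungraded} projectives over $A_{1_G}$ and \emph{graded} projectives over $A$; it is not a graded Morita equivalence between $\triv(A_{1_G})$ and $A$, so the bimodule $A$ cannot be promoted to graded Morita data as you propose. (In the example the composite $\Z\to\Z[\sigma]\to\Z$ is nevertheless the identity, consistent with the statement being true even though both factors fail.)

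The paper avoids this decomposition altogether: it cites the adjointness theorem \cite{kkg}*{Theorem 6.1.4} (see also \cite{arcor}*{Remark 8.4}), which produces a right adjoint to $\triv\colon kk\to\kkgr$ of crossed-product type, giving a natural isomorphism $\kkgr(\triv C,A)\cong kk(C,\widehat A)$ for an algebra $\widehat A$ built from $A$ and $G$, and then the bivariant Dade theorem \cite{arcor}*{Theorem 10.1}, which identifies $\widehat A$ with $A_{1_G}$ up to $kk$-equivalence \emph{precisely when $A$ is strongly graded}. If you want a self-contained proof, that is the route to take: the strong grading hypothesis enters in identifying the value of the right adjoint, not in contracting the inclusion $A_{1_G}\hookrightarrow A$.
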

\begin{proof} This follows directly from the proof of \cite{kkg}*{Theorem 6.1.4} (see also \cite{arcor}*{Remark 8.4}) 
and the bivariant version of Dade's teorem \cite{arcor}*{Theorem 10.1}.
\end{proof}

From Theorem 
\ref{thm:rep-units} and the proposition above, we obtain the main result of the section. 

\begin{thm} \label{thm:rep-grunits}
Let $A$ be a unital, strongly graded algebra, $p \in A_{1_G}$
an idempotent and $u$ a unit in $pA_{1_G}p$. Consider the map 
$\phi \colon \cS \to A$ given by $1 \mapsto p$, $t \mapsto u$.
Under the chain of isomorphisms
\[
\kkgr(\cS, A) \cong kk(\cS, A_{1_G}) \simeq KH_1(A_{1_G}),
\]
the arrow $j(\phi)$ has image $[1-p+u]$.
\qed
\end{thm}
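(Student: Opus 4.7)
The plan is to deduce the statement directly from the two preceding results by recognizing that $\phi$ is, by construction, the map $\nu_{p,u}$ from Definition \ref{defn:cS} applied inside the degree-zero subalgebra of $A$.

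First I would observe that since $p \in A_{1_G}$ and $u \in pA_{1_G}p \subset A_{1_G}$, the image of $\phi \colon \cS \to A$ is contained in $A_{1_G}$. Hence $\phi$ factors through the inclusion $A_{1_G} \hookrightarrow A$ as $\phi = \inc \circ \phi|$, where the corestriction $\phi| \colon \cS \to A_{1_G}$ is precisely $\nu_{p,u}$ as defined in Definition \ref{defn:cS} (viewed now intrinsically in the unital algebra $A_{1_G}$, in which $p$ is an idempotent and $u$ is a unit of $pA_{1_G}p$).

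Next I would invoke the Proposition immediately preceding the theorem: since $A$ is strongly graded and $\cS$ is trivially graded, we have a natural isomorphism $\kkgr(\cS, A) \cong kk(\cS, A_{1_G})$ which sends $j(\phi)$ to $j_{kk}(\phi|) = j_{kk}(\nu_{p,u})$. Then I would apply Theorem \ref{thm:rep-units} to the unital algebra $A_{1_G}$: the chain of natural isomorphisms
\[
kk(\cS, A_{1_G}) \xrightarrow{\wp_{\ell, A_{1_G}}} kk(\ell, \Omega A_{1_G}) \simeq KH_1(A_{1_G})
\]
sends $j_{kk}(\nu_{p,u})$ to $[1-p+u] \in KH_1(A_{1_G})$. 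Composing both identifications gives the claim.

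There is no real obstacle: the only thing requiring a moment of care is checking that the corestriction $\phi|$ indeed matches the homomorphism $\nu_{p,u}$ produced by Definition \ref{defn:cS} inside $A_{1_G}$, which is immediate from the definition of $\phi$ on the generators $1$ and $t$ of $\ell[t,t^{-1}]$ restricted to $\cS$. Once this identification is made, the theorem is just the concatenation of the previous Proposition and Theorem \ref{thm:rep-units}, and no further computation is needed.
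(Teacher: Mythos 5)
Your proposal is correct and matches the paper's own (essentially unwritten) argument: the paper likewise obtains the theorem as an immediate concatenation of the preceding Proposition on $\kkgr(C,A)\cong kk(C,A_{1_G})$ for trivially graded $C$ and strongly graded $A$, and Theorem \ref{thm:rep-units} applied to $A_{1_G}$. The identification of the corestriction $\phi|$ with $\nu_{p,u}$ in $A_{1_G}$ is exactly the point the paper leaves implicit, and you have verified it correctly.
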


For the next corollary we recall from \cite{arcor}*{Lemma 9.3} that given two arrows $\xi \in \kkgr(A,B)$, $\zeta \in \kkgr(C, D)$, 
their tensor product is defined as $\xi \otimes \zeta := (B \otimes \zeta) \circ (\xi \otimes C) = (\xi \otimes D) \circ (A \otimes \zeta)$.

\begin{coro} \label{lem:tensor-unit-idem}
Let $R$ and $B$ be unital graded algebras and $p \in R_{1_G}$, $q \in B_{1_G}$ two idempotents. Consider $u$ a unit of $R_{1_G}$ and $\inc_q \colon \ell \to B$
the algebra map sending $1$ to $q$. If $R$ is strongly graded and $u$ is a unit of $R_{1_G}$, then $\xi_{1 \otimes q, u \otimes q} = \xi_u \otimes \inc_q$.
\end{coro}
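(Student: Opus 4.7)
The strategy is to interpret both sides of the claimed equality as morphisms $\ell \to \Omega(R \otimes B)$ in $\kkgr$ and match them via the naturality formula \eqref{eq:omega-xi=xi}. Following the convention fixed by \eqref{Omegaphi-xi} (and Theorem \ref{thm:rep-units}), $\xi_{1 \otimes q,\, u \otimes q}$ denotes the map $\xi_w$ associated to the unit $w := 1 - 1\otimes q + u \otimes q \in (R \otimes B)^\times$ (with inverse $1 - 1 \otimes q + u^{-1} \otimes q$), which is homogeneous of degree $1_G$. It makes sense as an element of $\kkgr(\ell, \Omega(R\otimes B))$ via Theorem \ref{thm:rep-grunits}, since $R \otimes B$ is strongly graded by Example \ref{ex:tensor-strong}.

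Now let $f := \id_R \otimes \inc_q \colon R \to R \otimes B$; this is a (non-unital) graded homomorphism with $f(1_R) = 1 \otimes q$ and $f(u) = u \otimes q$. Applying \eqref{eq:omega-xi=xi} to $f$ yields
\[
\Omega(j(f)) \circ \xi_u \;=\; \xi_{1 - f(1) + f(u)} \;=\; \xi_{1 - 1\otimes q + u \otimes q} \;=\; \xi_{1 \otimes q,\, u \otimes q}.
\]
On the other hand, unwinding the definition of the tensor product of $\kkgr$-morphisms and using $\Omega(-) = \Omega \otimes (-)$,
\[
\xi_u \otimes \inc_q \;=\; (\id_{\Omega R} \otimes j(\inc_q)) \circ \xi_u \;=\; \Omega(\id_R \otimes j(\inc_q)) \circ \xi_u \;=\; \Omega(j(f)) \circ \xi_u.
\]
Comparing the two displays proves the identity.

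The only substantive verification is that \eqref{eq:omega-xi=xi}, originally stated in $kk$, applies verbatim in $\kkgr$ when $u$ is a homogeneous degree-$1_G$ unit in a strongly graded algebra. This is an immediate consequence of the naturality of the chain of isomorphisms $\kkgr(\cS, A) \cong kk(\cS, A_{1_G}) \simeq KH_1(A_{1_G})$ used to define $\xi_u$ in the graded setting (Theorem \ref{thm:rep-grunits}), applied to the degree-preserving map $\id_R \otimes \inc_q$ between strongly graded algebras. I do not expect any serious obstacle beyond this bookkeeping; the corollary is essentially a formal consequence of \eqref{eq:omega-xi=xi} once the graded version of $\xi$ is recognized to satisfy the same naturality.
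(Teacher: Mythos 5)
Your proof is correct and follows essentially the same route as the paper: the paper establishes the identity by drawing the commuting naturality square for $\inc_q$ against the explicit factorization $\xi_u = \Omega(\nu_u)\circ\Omega(\nu_L)^{-1}\circ\xi_L$, which is exactly the content of the naturality formula \eqref{eq:omega-xi=xi} that you invoke directly, together with the same unwinding of $\xi_u\otimes\inc_q$ as $(\Omega R\otimes\inc_q)\circ\xi_u$. The one point you flag --- that \eqref{eq:omega-xi=xi} transfers to the graded/strongly graded setting --- is handled implicitly in the paper by working with the diagram in $\kkgr$ from the start, so there is no gap.
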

\begin{proof} The map $\inc_q$
defines a natural transformation $\id \Rightarrow - \otimes B$ and thus
we have the following commuting diagram:
\[
\begin{tikzcd}[column sep = large, row sep = large]
    \ell \arrow{r}{\sim} \arrow{r}\arrow{d}{\inc_q} &
    \Omega \Sigma \arrow{r}{\nu_L} \arrow{d}{\Omega \Sigma \otimes \inc_q}&
    \Omega \cS \arrow{r}{\Omega \phi_u} \arrow{d}{\cS \otimes \inc_q} & 
    \Omega R \arrow{d}{\Omega R \otimes \inc_q}\\
    B \arrow{r}{\sim} &
    \Omega \Sigma \otimes B \arrow{r}{\nu_L \otimes B} &
    \Omega \cS \otimes B \arrow{r}{\phi_u \otimes B} & 
    \Omega R \otimes B \\
\end{tikzcd}
\]
By Theorem \ref{thm:rep-grunits}, the top row corresponds 
to $\xi_u$. Composition by $\Omega R \otimes \inc_q$
corresponds to the morphism $KH_1^{\gr}(R) \to KH_1^{\gr}(R \otimes B)$ induced by $R \otimes \inc_q$; hence, the top row of the diagram followed
by the right-most vertical arrow corresponds to
$\xi_{(R \otimes \inc_q)(1), (R \otimes \inc_q)(u)} = \xi_{1 \otimes q, u \otimes q}$.
As the diagram shows, this has to coincide with $\inc_q = \ell \otimes \inc_q$ composed with $\xi_u \otimes B$, which is by definition $\xi_u \otimes \inc_q$.
\end{proof}

We conclude the section with some results on boundary maps 
from $K_1^{\gr}$to $K_0^{\gr}$.

\begin{prop} \label{prop:gr-index}
Let $\pi \colon R \to S$ be a surjective, 
graded $\ast$-algebra homomorphism; 
write $I := \ker(\pi)$. Assume that $R$ is strongly graded. 
Let $u \in S_1$ be a unit.
If $\hat u \in R_1$ is a partial isometry such that $\pi(\hat u) = u$, 
then the boundary map $\partial \colon K_1^{\gr}(R) \to K_0^{\gr}(I)$
maps $[u]$ to $[1-\hat u^\ast \hat u] - [1-\hat u \hat u^\ast]$.
\end{prop}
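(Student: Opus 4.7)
The claim is a graded analogue of the classical index formula in algebraic $K$-theory, and my plan is to mirror the ungraded Milnor-style matrix construction, performed entirely in the graded setting so that no translation back is needed.

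First I would gather the relevant algebraic facts. Since $\pi$ is surjective and $R$ is strongly graded, $S$ is also strongly graded: $S_g S_h = \pi(R_g)\pi(R_h) = \pi(R_{gh}) = S_{gh}$. The inverse of $u \in S_{1_G}$ is homogeneous of degree $1_G^{-1} = 1_G$, so $u \in (S_{1_G})^\times$. The partial isometry identity $\hat u \hat u^\ast \hat u = \hat u$ combined with $\pi(\hat u) = u$ gives $u u^\ast u = u$; since $u$ is a unit this forces $u^\ast u = u u^\ast = 1$. Consequently both $1 - \hat u^\ast \hat u$ and $1 - \hat u \hat u^\ast$ are idempotents of $R_{1_G}$ whose images in $S$ vanish, so they lie in $I_{1_G}$ and represent classes in $K_0^{\gr}(I)$.

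Second, I would carry out the Milnor-style matrix construction. Equipping $M_2$ with the trivial grading, consider
\[
W = \begin{pmatrix} \hat u & -(1 - \hat u \hat u^\ast) \\ 1 - \hat u^\ast \hat u & \hat u^\ast \end{pmatrix} \in M_2(R_{1_G}),
\]
which is homogeneous of degree $1_G$ in $M_2 R$. Using $\hat u \hat u^\ast \hat u = \hat u$ and its adjoint $\hat u^\ast \hat u \hat u^\ast = \hat u^\ast$, a direct verification shows $W W^\ast = W^\ast W = I_2$, so $W$ is a graded unitary lift of $\mathrm{diag}(u, u^{-1})$, whose class in $K_1^{\gr}$ equals $[u]$.

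Third, I would apply the Milnor description of the connecting homomorphism, $\partial[u] = [W\,\elmat_{1,1}\, W^{-1}] - [\elmat_{1,1}]$, computed in $K_0^{\gr}$ of (a unitalization of) $M_2 I$. A direct computation using the partial isometry identities gives
\[
W \elmat_{1,1} W^\ast = \begin{pmatrix} \hat u \hat u^\ast & 0 \\ 0 & 1 - \hat u^\ast \hat u \end{pmatrix},
\]
so $[W\elmat_{1,1}W^{-1}] - [\elmat_{1,1}] = [\hat u \hat u^\ast] + [1 - \hat u^\ast \hat u] - [1]$, which rearranges via the orthogonal decomposition $1 = \hat u \hat u^\ast + (1 - \hat u \hat u^\ast)$ to the claimed formula $[1 - \hat u^\ast \hat u] - [1 - \hat u \hat u^\ast]$. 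The main point to handle carefully is the justification of the Milnor formula for the graded boundary map; this can either be argued by invoking Dade's theorem to transfer the question to the ungraded extension $I_{1_G} \hookrightarrow R_{1_G} \twoheadrightarrow S_{1_G}$, where the formula is classical, or by observing that every element of the construction (the lifts, the matrix $W$, and the conjugation) is homogeneous of degree $1_G$, so the standard ungraded argument applies verbatim in the graded category.
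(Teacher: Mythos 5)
Your proof is correct and follows essentially the same route as the paper's: reduce to the boundary map of the ungraded extension $I_{1_G} \to R_{1_G} \to S_{1_G}$ of degree-zero subrings (via Dade's theorem / strong gradedness) and apply the classical index formula there --- the paper simply cites the explicit Milnor-matrix computation from \cite{kkhlpa}*{proof of Lemma 11.1} instead of carrying it out as you do, and packages the transfer as a comparison square. One small slip: the class of $\mathrm{diag}(u,u^{-1})$ in $K_1$ is $0$, not $[u]$; what matters, and what you in fact use correctly, is only that $W$ is an invertible lift of $\mathrm{diag}(u,u^{-1})$, to which Milnor's formula $\partial[u]=[W\elmat_{1,1}W^{-1}]-[\elmat_{1,1}]$ applies.
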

\begin{proof} In the case of (hermitian) ungraded $K$-theory, a 
more general result is proven in \cite{kkhlpa}*{proof of Lemma 11.1}; the former implies in particular that
$\partial' \colon K_1(R_{1_G}) \to K_0(I_{1_G})$
maps $[u]$ to $[1-\hat u^\ast \hat u] - [1-\hat u \hat u^\ast]$.
The conclusion follows from the comparison square
\[
\begin{tikzcd}
K_1(R_{1_G}) \arrow{d} 
\arrow{r}{\partial'} & K_0(I_{1_G}) \arrow{d}\\
K^{\gr}_1(R) \arrow{r}{\partial} & K^{\gr}_0(I)
\end{tikzcd}
\]
\end{proof}

\begin{lem}\label{lem:cS=sigma}
Let $X$ be a graded set such that $X_1 \neq \emptyset$. For any $x \in X_1$, the algebra map 
\begin{equation}\label{def:thetaX}
\theta_X \colon \cS \to \Sigma^{\gr}_X, \qquad  t \mapsto \sum_{i \ge 1} \elmat_{(x, i+1),(x, i)}    
\end{equation}
is a $\kkgr$-equivalence. 
\end{lem}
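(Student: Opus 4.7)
The plan is to factor $\theta_X$ as a composition of two $\kkgr$-equivalences, one of which is the canonical inclusion from Corollary~\ref{coro:ungrad=Xhat} and the other an ungraded $kk$-equivalence transported via $\triv\colon kk\to\kkgr$. This reduces the problem to identifying $\theta_X$ with a standard map at the ungraded level.

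The key observation is that $\theta_X(t)$ is the image in $\Sigma_X^{\gr}$ of the right shift $s_0=\sum_{i\ge 1}\elmat_{i+1,i}\in\Gamma$ under the inclusion $i_x\colon\N\to\gc{X}$, $k\mapsto(x,k)$. Since $x\in X_{1_G}$, every element of the form $(x,k)$ has degree $1_G$, so the induced inclusion $\inc\colon\Sigma\hookrightarrow\Sigma_X^{\gr}$ from Corollary~\ref{coro:ungrad=Xhat} is a well-defined graded homomorphism with image in the identity component. Let $\nu_s\colon\cS\to\Sigma$ be the restriction to $\cS$ of the homomorphism $\phi_s\colon\ell[t,t^{-1}]\to\Sigma$ sending $t$ to the unit $s=[s_0]$ (which is a unit since $s^{-1}=L$). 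A direct inspection then gives $\theta_X=\inc\circ\nu_s$.

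By Corollary~\ref{coro:ungrad=Xhat}, $\inc$ is a $\kkgr$-equivalence. To see that $\nu_s$ is a $kk$-equivalence, recall that the discussion preceding \eqref{map:wpnat} (from \cite{kk}*{Section~4.10 and proof of Theorem~7.3.1}) asserts that $\nu_L\colon\cS\to\Sigma$ is a $kk$-equivalence. The algebra automorphism of $\ell[t,t^{-1}]$ sending $t\mapsto t^{-1}$ restricts to an automorphism $\iota\colon\cS\to\cS$, and since $\nu_L(t^{-1})=L^{-1}=s$ one has $\nu_s=\nu_L\circ\iota$. As a composition of $kk$-equivalences, $\nu_s$ is itself a $kk$-equivalence. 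Applying the canonical functor $\triv\colon kk\to\kkgr$ (which preserves isomorphisms) yields that $\nu_s$ is a $\kkgr$-equivalence, and hence so is $\theta_X=\inc\circ\nu_s$.

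I do not expect serious obstacles here: the argument is essentially a bookkeeping verification that $\theta_X$ coincides with a canonical composition after identifying ungraded and trivially graded data. The only mildly delicate point is to check that $\inc$ is a graded map, which is exactly why the hypothesis $x\in X_{1_G}$ is needed. Everything else is immediate from results already established in the paper.
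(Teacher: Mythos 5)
Your proof is correct and follows essentially the same route as the paper: factor $\theta_X$ through the inclusion $\Sigma\hookrightarrow\Sigma^{\gr}_X$ of Corollary~\ref{coro:ungrad=Xhat} and reduce to the $kk$-equivalence $\cS\to\Sigma$. The only difference is that you explicitly handle the discrepancy between the right shift $s$ and $L=s^{-1}$ via the automorphism $t\mapsto t^{-1}$ of $\cS$, a detail the paper's two-line proof glosses over by citing $\nu_L$ directly.
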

\begin{proof} By Corollary \ref{coro:ungrad=Xhat},
the map $\Sigma \to \Sigma^{\gr}_X$ induced
by the inclusion $\N \subset \{x\} \times \N$ is a $\kkgr$-equivalence. The result is thus implied by the fact that $\nu_L \colon \cS \to \Sigma$ is a $\kkgr$-equivalence.
\end{proof}

\section{Poincaré duality}

This section is devoted
to the proof of the graded analogue
of Poincaré duality \cite{kkhlpa}*{Theorem 11.2} and its consequences. 
Although not needed in the rest of this manuscript, we shall prove the result for any grading on $L(E)$ given by a \emph{weight function} $\omega \colon E^1 \to G$, that is, the one given by the extension of the rule $|v| = 1_G$, $|e| = \omega(e)$, $|e^\ast| = \omega(e)^{-1}$ for each $v \in E^0$, $e \in E^1$. We shall write $L_\omega(E)$ to emphasize that $L(E)$
is being considered as a graded algebra with grading induced by $\omega$. Recall that 
the \emph{dual graph} $E_t$ of a graph 
$E$ is given by vertex and edge sets
\[
E_t^0 = E^0, \qquad E_t^1 = \{e_t \colon e \in E^1\}
\]
and source and range functions
\[
r(e_t) = s(e), \qquad s(e_t) = r(e) \qquad (e \in E^1).
\]

\begin{thm} \label{thm:poinc}
If $E$ is a finite essential graph, and 
$\omega \colon E^1 \to G$ a weight function, 
then $- \otimes_\ell L_\omega(E)$ is 
left adjoint to $- \otimes \Omega L_\omega(E_t)$ as endofuntors of $\kkgr$. Thus,
for each $R, S \in \grAlg$ there are isomorphisms
\[
\kkgr(R \otimes_\ell L_\omega(E), S) \cong \kkgr(R,S\otimes_\ell \Omega L_\omega(E_t)). 
\]
natural in both $R$ and $S$.
\end{thm}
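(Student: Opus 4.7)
The plan is to exhibit $L_\omega(E)$ as a dualizable object in the symmetric monoidal category $\kkgr$ with dual $\Omega L_\omega(E_t)$; the desired adjunction is then the standard hom--tensor shuffle associated to any dual pair. Concretely, I need to construct a unit $\eta \colon \ell \to L_\omega(E) \otimes \Omega L_\omega(E_t)$ and a counit $\varepsilon \colon \Omega L_\omega(E_t) \otimes L_\omega(E) \to \ell$ in $\kkgr$ and verify the two triangle identities. For brevity I write $L = L_\omega(E)$ and $L^\vee = L_\omega(E_t)$.

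For the unit, consider the degree-$1_G$ element $u_1 = \sum_{e \in E^1} e \otimes e_t^\ast$ of $L \otimes L^\vee$ and the degree-$1_G$ idempotent $p = \sum_{v \in E^0} v \otimes v$. A direct computation using \eqref{ck1} and the fact that \eqref{ck2} holds at every vertex of $E$ (since $E$ is essential and thus has no sinks) yields $u_1 u_1^\ast = \sum_v v \otimes v = p$; dually, the absence of sources in $E$ means \eqref{ck2} applies to every vertex of $E_t$, giving $u_1^\ast u_1 = p$. Hence $u_1$ is a homogeneous unit in the corner $p(L \otimes L^\vee)p$. The graded algebra map $\nu_{p,u_1} \colon \cS \to L \otimes L^\vee$ of Definition \ref{defn:cS}, composed with the inverse of the $\kkgr$-equivalence $\nu_L \colon \cS \xto{\sim} \Sigma$ and the adjunction map $\cL$ of \eqref{def:cL}, then yields a $\kkgr$-morphism $\ell \to \Omega(L \otimes L^\vee)$; the canonical flip identifies the target with $L \otimes \Omega L^\vee$, producing $\eta$.

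For the counit, consider the graded set $E^\ast$ of finite paths in $E$ with grading $|e_1\cdots e_n| = \omega(e_1)\cdots\omega(e_n)$. Since $E$ is finite and essential it must contain a cycle, so $E^\ast$ is graded infinite and the algebras $\Gamma^{\gr}_{E^\ast}$, $\Sigma^{\gr}_{E^\ast}$ of Section \ref{sec:inf-sum} are available, together with the isomorphism $\partial^{\gr}_{E^\ast} \colon \Omega \Sigma^{\gr}_{E^\ast} \xto{\sim} \ell$ of \eqref{def:partial-circ}. I then build a graded algebra homomorphism $\rho \colon L \otimes L^\vee \to \Sigma^{\gr}_{E^\ast}$ from the natural Cuntz--Krieger representation: $e \in E^1$ acts by prepending an edge to a path and $e^\ast$ by removing the initial edge, while $e_t, e_t^\ast \in L^\vee$ act analogously on the terminal edge. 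The axioms \eqref{V}--\eqref{ck2} for $E$ and for $E_t$, available precisely because $E$ is essential, translate to the defining relations in $\Sigma^{\gr}_{E^\ast}$ modulo $M^{\gr}_{E^\ast}$. Passing $j(\rho)$ through the adjunction map $\cU$ of \eqref{def:cU} and post-composing with $\partial^{\gr}_{E^\ast}$ (after the appropriate flip) delivers $\varepsilon \colon \Omega L^\vee \otimes L \to \ell$.

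The main obstacle will be verifying the two triangle identities. The expected mechanism is as follows: the composition $(\varepsilon \otimes L)\circ(L \otimes \eta)$ reduces, via Theorem \ref{thm:tensor-compatibility} and Remark \ref{rmk:right-boundary-tensor} applied to tensor products of the loop and cone extensions, to an identity in $\kkgr(L,L)$ tracked as a concrete multiplication inside $(p \otimes 1)(L \otimes L^\vee \otimes L)(p \otimes 1)$. Here Theorem \ref{thm:rep-grunits} converts the unit datum $u_1$ into its $KH_1^{\gr}$-class, the Cuntz--Krieger relations identify $\rho(u_1)$ up to elements of $M^{\gr}_{E^\ast}$ with the shift whose class is recognized by Proposition \ref{prop:sLx}, and Proposition \ref{prop:ad} collapses the remaining inner conjugations by the partial isometries coming from $u_1$ and $u_1^\ast$. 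The bookkeeping of flips, of the adjoint equivalences of Theorems \ref{thm:loop-adj-ol}--\ref{thm:loop-adj-or}, and of the identifications $\cS \simeq \Sigma \simeq \Sigma^{\gr}_{E^\ast}$ is where the bulk of the technical work will lie. The second triangle identity should follow from the first by the $E \leftrightarrow E_t$ symmetry together with $(E_t)_t = E$.
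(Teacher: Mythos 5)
Your unit and counit are the same data the paper uses: $u_1=\sum_e e\otimes e_t^\ast$ (and your verification that $u_1u_1^\ast=u_1^\ast u_1=p$ via \eqref{ck1} and \eqref{ck2} in $E$ and $E_t$, using essentiality, is correct and is exactly why the hypothesis is needed), and a path-space representation $\rho$ landing in a graded suspension $\Sigma^{\gr}_X$, converted to $\kappa\colon \Omega L(E_t)\otimes L(E)\to\ell$ by $\partial^{\gr}_X$. Up to this point you are reconstructing the paper's $\nabla$ and $\kappa$.

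The gap is in the step you yourself flag as ``the main obstacle.'' Two problems. First, you commit to verifying the triangle identities on the nose, i.e.\ that the roundtrip composites equal $\id_{L}$ and $\id_{\Omega L^\vee}$. The paper never proves this and does not need to: it only shows that the composites $(\kappa\otimes\Omega L(E_t))\circ(\Omega L(E_t)\otimes\nabla)$ and $(L(E)\otimes\kappa)\circ(\nabla\otimes L(E))$ are \emph{isomorphisms} in $\kkgr$, which already forces the two natural hom-set maps to be mutually inverse bijections. Indeed the paper's computation produces a sign (the identity $(\nu_L\otimes L(E_t))^{-1}\circ(\Sigma\inc_1\otimes L(E_t))^{-1}\circ d=-\id$, absorbed into $\Upsilon_2=-\mu\mu'\mu''$), so the roundtrip is a nontrivial automorphism rather than the identity; insisting on exact triangle identities would force you to renormalize $\eta$ or $\varepsilon$ by this automorphism, which you have not anticipated. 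Second, and more seriously, your proposed mechanism for the verification --- that Proposition \ref{prop:ad} and the Cuntz--Krieger relations ``collapse'' the composite to an identity tracked by a concrete multiplication --- is not a proof and does not reflect how such an identity in $\kkgr(L,L)$ can actually be established. The paper's argument reduces the claim to showing that a comparison map $\zeta\colon\cS\otimes L(E_t)\to\Sigma^{\gr}_X\otimes L(E_t)$ is a $\kkgr$-isomorphism, and proves this by fitting $\zeta$ into a morphism of distinguished triangles coming from the Cohn extension \eqref{ext:cohn} of $L(E_t)$ and the representation $\partial\colon L(E_t)\to\bigoplus_{v}\Sigma^{\gr}_{\cP^v}$, then invoking the five lemma in the triangulated category (together with Theorem \ref{thm:rep-units}, Proposition \ref{prop:gr-index}, and Lemma \ref{lem:boun-inc} to match the outer vertical arrows). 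None of this appears in your outline, and without some substitute for it the proof does not close: morphisms in $\kkgr(L,L)$ cannot be identified by elementwise computation inside $(p\otimes 1)(L\otimes L^\vee\otimes L)(p\otimes 1)$.
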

\begin{proof} We adapt the proof \cite{kkhlpa}*{Theorem 11.2} to the present setting, which we will
frequently cite in the argument below.
Consider $\mathcal P_{\ge 1}$ the set of paths of positive length; we endow this set with a grading via the weighted length function $e_1 \cdots e_n \mapsto \omega(e_1)\cdots \omega(e_n)$. From now on, we omit the weight $\omega$
from the notation. Given $v \in E^0$, 
the set of paths starting at $v$
will be denoted $\cP^v$. Those ending 
at $v$ will be denoted $\cP_v$. Both
are graded sets viewed as subsets of $\cP$.

Put
$X = \mathcal P_{\ge 1} \sqcup \{\bullet\}$ and 
set $|\bullet| = 1_G$. We shall view $L(E_t) \otimes L(E)$ and 
$L(E) \otimes L(E_t)$ as graded algebras via the 
tensor product grading.
The morphisms
\[
\rho_1(e) = \left[\sum_{\alpha \in \cP_{s(e)}} \elmat_{\alpha e, \alpha}\right], \qquad \rho_2(e_t) = \left[\sum_{\alpha \in \cP^{r(e)}} \elmat_{e\alpha, \alpha}\right]
\]
in loc. cit. can be correstricted to 
morphisms with codomain $\Sigma^\circ_X$
which we denote in the same way. 
Composing with the canonical 
map $\Sigma^\circ_X \to \Sigma^{\gr}_X$, we obtain a graded homomorphism 
$\rho  \colon L(E_t) \otimes L(E)  \to \Sigma^{\gr}_X$. Put
\begin{equation*}
\kappa \colon \Omega L(E_t) \otimes L(E) \to \ell, \qquad \kappa := \Omega L(E_t) \otimes L(E) \xto{\Omega \rho}
\Omega \Sigma^{\gr}_X \xto{\partial_X^{\gr}} \ell.
\end{equation*}
Tensoring by $\kappa$ on the right yields a natural map
\begin{align}\label{bij1}
\kkgr(R, S \otimes \Omega L(E_t)) \to \kkgr(R \otimes L(E), S), \qquad
\xi \mapsto  (S \otimes \kappa) \circ (\xi \otimes L(E)).
\end{align}
In the other direction, we consider the elements $u_1 = \sum_{e\in E^1} e \otimes e_t^\ast$ and $p = \sum_{v\in E^0} v \otimes v$ as in the ungraded case, noting that they lie in the homogenous component of degree zero of $L(E) \otimes L(E_t)$. Thus 
$u_1 = u+1-p$ is a degree zero unit of $L(E) \otimes L(E_t)$ and we have an induced map 
$\nu_{u_1} \colon \cS \to L(E) \otimes L(E_t)$. By Lemma \ref{thm:rep-units}, 
the map $\xi_{u_1} \in \kkgr(\ell, \Omega L(E) \otimes L(E_t))$ associated 
to $[u_1] \in KH_1^{\gr}(L(E) \otimes L(E_t))$ equals $\wp_{\cS, L(E) \otimes L(E_t)}(\nu_{u_1}) = \Omega(\nu_{u_1})\wp$.
We now consider the composition
\begin{align}
\nabla := \ell \xto{\wp} \Omega \cS \xto{\Omega \nu_{u_1}} \Omega \otimes L(E) \otimes L(E_t) \iso L(E) \otimes \Omega L(E_t).
\end{align}
This defines a natural map
\begin{align}\label{bij2}
\kkgr(R \otimes L(E), S)\to\kkgr(R, S \otimes \Omega L(E_t)), \qquad
\xi \mapsto  (\xi \otimes \Omega L(E_t)) \circ (R \otimes \nabla).
\end{align}

Similar to the ungraded case, to see that the compositions of \eqref{bij1} and \eqref{bij2} are bijections, it suffices to show that 
$(\kappa \otimes \Omega L(E_t)) \circ (\Omega L(E_t) \otimes \nabla)$ and $(L(E)\otimes \kappa) \circ (\nabla \otimes L(E))$ are isomorphisms in $\kkgr$. We will indicate how to adapt the argument for the first composition, the other one follows likewise.
Define $\zeta \colon \cS \otimes L(E_t) \to \Sigma^{\gr}_X \otimes L(E_t)$ to be
the restriction of
\[
\zeta' \colon \ell[t,t^{-1}] \otimes L(E_t) \to \Sigma^{\gr}_X \otimes L(E_t), \quad s \otimes 1 \mapsto (\rho_2 \otimes 1)(u), \quad 1 \otimes x \mapsto \rho_1(x) \otimes 1
\]
and consider the following 
permutations of tensor factors:
\begin{align*}
(2 4 3) &\colon \Omega L(E_t) \otimes \Omega \cS \iso \Omega \otimes \Omega \otimes \cS \otimes L(E_t); \\
(2 3) &\colon \Omega \otimes \Omega \otimes \Sigma^{\gr}_X \otimes L(E_t)
\iso \Omega \Sigma^{\gr}_X \otimes \Omega L(E_t).
\end{align*}
A direct calculation shows that 
$(\kappa \otimes \Omega L(E_t)) \circ (\Omega L(E_t) \otimes \nabla)$ 
agrees with the following composition:
\[
\Omega L(E_t) \xto{\Omega L(E_t) \otimes \wp} \Omega L(E_t) \otimes \Omega \cS \xto{(2 3) \circ (\Omega \otimes \Omega \otimes \zeta) \circ (2 4 3)} \Omega \Sigma^{\gr}_X \otimes \Omega L(E_t) \xto{\partial^{\gr}_X \otimes \Omega L(E_t)} \Omega L(E_t).
\]
Hence, to see that $(\kappa \otimes \Omega L(E_t)) \circ (\Omega L(E_t) \otimes \nabla)$ is a $\kkgr$-isomorphism it suffices to show that $\zeta$ is one. To this end we define, as in the ungraded case, the graded $\ast$-homomorphism
\[
\partial \colon L(E_t) \to \bigoplus_{v \in E^0} \Sigma^{\gr}_{P^v}, 
\qquad e_t \mapsto \sum_{\alpha \in \cP^v_{s(e)}} \elmat_{\alpha e, \alpha}.
\]
It lifts to a graded $\ast$-homomorphism $C(E_t) \to \bigoplus_{v \in E^0} \Gamma^\circ_{\gc{\cP^v}}$ which restricts to the canonical 
isomorphism $\cK(E_t) \simeq \bigoplus_{v \in E^0} M_{\cP^v}$. 
We thus have maps of triangles
\begin{equation}
\begin{tikzcd}\label{diag:delta}
\ell^{E^0} \arrow{d}{\sim} \arrow{r} & \ell^{E^0} \arrow{r}\arrow{d}[left]{\inc}\arrow{d}{\sim} & L(E_t) \arrow[equals]{d} \arrow{r} & \Sigma \ell^{E^0}\arrow{d}{\sim} \\
\cK(E_t) \arrow{d}{\sim} \arrow{r} & C(E_t) \arrow{r}\arrow{d} & L(E_t) \arrow{d}{\partial} \arrow{r} & \Sigma \cK(E_t)\arrow{d}{\sim} \\
\bigoplus_{v \in E^0} M_{\cP^v} \arrow{r} & 
\bigoplus_{v \in E^0} \Gamma^{\gr}_{\cP^v} \arrow{r}& 
\bigoplus_{v \in E^0} \Sigma^{\gr}_{P^v} 
\arrow{r}{\sim} & \Sigma \bigoplus_{v \in E^0} M_{\cP^v}
\end{tikzcd}
\end{equation}

Hence, the boundary map $L(E_t) \to \Sigma \cK(E)$
corresponds to the $\kkgr$-class of $\partial$ and 
have a triangle
\[
\ell^{E^0} \xto{\inc} L(E_t) \xto{\partial} \bigoplus_{v \in E^0} \Sigma^{\gr}_{\cP^v}.
\]
Tensoring with $\cS$ and $\Sigma^{\gr}_X$ respectively, 
we obtain two distinguished triangles in $\kkgr$. 
To conclude the proof that $\zeta$ 
is an isomorphism, we will complete
$\zeta$ to a morphism of triangles as in the following diagram, where
both dashed arrows will be isomorphisms.

\begin{equation}\label{diag:upsilons}
\begin{tikzcd}
\cS \otimes \ell^{E^0} \arrow{r}{\cS \otimes \inc} \arrow[dashed]{d}[left]{\Upsilon_1}& \cS \otimes L(E_t) \arrow{d}{\zeta} \arrow{r}{\cS \otimes \partial} & \cS \otimes \bigoplus_{v \in E^0} \Sigma^{\gr}_{\cP^v} \arrow[dashed]{d}{\Upsilon_2} \\
\Sigma^{\gr}_X \otimes \ell^{E^0} \arrow{r}{\Sigma^{\gr}_X\otimes\inc} & \Sigma^{\gr}_X \otimes L(E_t) \arrow{r}{\Sigma^{\gr}_X\otimes\partial} & \Sigma^{\gr}_X \otimes \bigoplus_{v \in E^0} \Sigma^{\gr}_{\cP^v}.
\end{tikzcd}
\end{equation}

We construct the left hand arrow first using
the map $\theta_{X}$ as defined in 
\eqref{def:thetaX}. Set $\Upsilon_1 = \theta_{X} \otimes \ell^{E^0}$, which is an isomorphism
by Lemma \ref{lem:cS=sigma}. We shall
now see that $\zeta (S \otimes \inc) = (\Sigma^\circ_{\gc{X}} \otimes \inc) \circ \Upsilon_1$.
By additivity, it suffices to see that these compositions agree in each factor $\cS \otimes v$. In view 
of Lemma \ref{thm:rep-units}, this boils down 
to checking whether $1 - \theta_{X}(1) \otimes v + \theta_{X}(t) \otimes v$
and $1-\zeta(1 \otimes v) + \zeta(t \otimes v)$ represent the same 
class in $KH_1^{\gr}(\Sigma^{\gr}_X \otimes L(E_t))$. As in the ungraded setting, this follows from a direct computation using Proposition \ref{prop:gr-index} and the fact that, in this particular case, the boundary map $\partial \colon KH_1^{\gr}(\Sigma^{\gr}_X \otimes L(E_t)) \to KH_0^{\gr}(L(E_t))$ is an isomorphism.

Now we turn to defining the dashed right-most arrow. Write $\tau := \ell\{t,t^\ast : t^\ast t = 1\}$ where $|t| = 1_G$. Recall that there is an isomorphism $M_\infty \cong \ker(\tau \hookrightarrow \ell[t,t^{-1}])$ mapping $\elmat_{1,1}$ to $1-tt^\ast$.
As in the ungraded case, the restriction of $\zeta'$ to $\ell[t,t^{-1}] \otimes 1 \subset \ell[t,t^{-1}] \otimes L(E_t)$ can be extended to a graded homomorphism $\hat \zeta \colon \tau \to \Sigma^{\gr}_X \otimes L(E_t)$. 
Put $\tau_0 := \ker(\tau \xto{\ev_1} \ell)$. Consider now the following morphisms of triangles:
\begin{equation}\label{diag:eta}
\begin{tikzcd}
M_\infty L(E_t) \arrow{r}{} \arrow[equals]{d} & \tau_0 \otimes L(E_t) \arrow{d} \arrow{r} & \cS \otimes L(E_t) \arrow{d}{i \otimes L(E_t)} \arrow{r}[above]{d}
\arrow{r}[below]{\sim} & \Sigma M_\infty L(E_t) \arrow[equals]{d} \\
M_\infty L(E_t) \arrow{r}{} \arrow{d}{\hat \zeta|} & \tau \otimes L(E_t) \arrow{d}{\hat \zeta} \arrow{r} & \ell[t,t^{-1}] \otimes L(E_t) \arrow{d}{\zeta'} \arrow{r} & \Sigma M_\infty L(E_t) \arrow{d}{\Sigma(\hat \zeta|)}
\\
\Sigma^{\gr}_X \mathcal K(E_t) \arrow{r} & 
\Sigma^{\gr}_X C(E_t) \arrow{r} &
\Sigma^{\gr}_X L(E_t) \arrow{r} & 
\Sigma\Sigma^{\gr}_X \cK(E_t)
\end{tikzcd}    
\end{equation}
Since $\tau_0$ is trivially graded and 
by \cite{kk}*{Lemma 7.3.2} it is $kk$-equivalent to zero as an ungraded algebra, it follows that 
$\tau_0 \cong 0$ in $\kkgr$. In particular, the right boundary map $d$ of the top triangle 
is an isomorphism. Together with \eqref{diag:delta}, 
the diagram above says in particular that we have a 
commuting diagram as follows:
\begin{equation*}
\begin{tikzcd}
\cS \otimes L(E_t) \arrow{d}{\zeta} \arrow{r}[above]{d}
\arrow{r}[below]{\sim} & \Sigma M_\infty L(E_t) 
\arrow{d}{\Sigma(\hat\zeta|)} & \\
\Sigma^{\gr}_X  L(E_t) \arrow{r} \arrow{d}{\Sigma^{\gr}_X (\partial)} & 
\Sigma \Sigma^{\gr}_X  \cK(E_t) \arrow{r}{\sim} & \Sigma^{\gr}_X  \Sigma \cK(E_t) \arrow{d}{\sim}  \\
\Sigma^{\gr}_X   \bigoplus_{v \in E^0} \Sigma^{\gr}_{\cP^v} \arrow{rr}{\sim} & &
\Sigma^{\gr}_X  \Sigma \bigoplus_{v \in E^0} M_{\cP^v}
\end{tikzcd}
\end{equation*}
From this we obtain an isomorphism $\mu\colon
\Sigma \Sigma^{\gr}_X  \cK(E_t) \to \Sigma^{\gr}_X  \bigoplus_{v\in E^0} \Sigma^{\gr}_{P^v}$ such that 
\begin{equation}
    \Sigma^{\gr}_X (\partial) \circ \zeta = \mu \circ \Sigma(\hat\zeta|) \circ d
\end{equation}
Similary, by the same argument as in the ungraded case we obtain a commuting diagram
\[
\begin{tikzcd}
 L(E_t)  \arrow{r}{\partial}
 \arrow{d}[left]{\inc_1 \otimes L(E_t)} 
 \arrow{d}[right]{\sim} & \bigoplus_{v \in E^0} \Sigma^{\gr}_{\cP^v} \arrow{r}[above]{\bigoplus_v \inc_v}  
 \arrow{r}[below]{\sim} & \bigoplus_{v \in E^0} \Sigma^{\gr}_{X}  \arrow{d}[left]{\sim} \arrow{d}{\sum_{v \in E^0} \Sigma^{\gr}_X  \otimes q_v }\\
 M_\infty L(E_t) \arrow{rr}{\hat\zeta|} &  & \Sigma^{\gr}_X \cK(E_t)
\end{tikzcd}
\]
guaranteeing the existence 
of an isomorphism $\mu' \colon \bigoplus_{v \in E^0} \Sigma^{\gr}_{\cP^v}
\to \Sigma^{\gr}_X \cK(E_t)$ such that 
\[
\mu' \circ \Sigma(\partial) \circ \Sigma(\inc_1 \otimes L(E_t))^{-1} = \Sigma(\hat\zeta|).
\]
Further, as we have an isomorphism $\nu_L \colon \cS \iso \Sigma$, 
it follows that
\[
(\nu_L \otimes \bigoplus_{v \in E^0} \Sigma^{\gr}_{\cP^v}) \circ (\cS \otimes \partial) = 
\Sigma(\partial) \circ (\nu_L \otimes L(E_t))
\]
Thus, setting $\mu'' = 
\nu_L \otimes \bigoplus_{v \in E^0} \Sigma^{\gr}_{\cP^v}$, 
we get
\[
\Sigma^{\gr}_X \circ \zeta = \mu \mu' \mu'' \circ (\cS \otimes \partial) \circ 
 (\nu_L \otimes L(E_t))^{-1} \circ (\Sigma\inc_1 \otimes L(E_t))^{-1} \circ d.
\]

To conclude, we will see that $(\nu_L \otimes L(E_t))^{-1} \circ (\Sigma\inc_1 \otimes L(E_t))^{-1} \circ d = -\id$. 
Indeed, by Lemma \ref{lem:boun-inc} we have a commuting diagram
\[
\begin{tikzcd}
    M_\infty L(E_t) \arrow[equals]{d} \arrow{r} & \tau_0 L(E_t)
    \arrow{d} \arrow{r} & \cS L(E_t)
    \arrow{r}{d} \arrow{d}[left]{\nu_L \otimes L(E_t)} & \Sigma M_\infty L(E_t) \arrow[equals]{d} \\
    M_\infty L(E_t) \arrow{r} & \Gamma L(E_t) \arrow{r} & \Sigma L(E_t)
    \arrow{r}{\delta} & \Sigma M_\infty L(E_t),
\end{tikzcd}
\]
where $\delta = -(\Sigma \inc_1 \otimes L(E_t))$. Therefore
$d = -(\Sigma \inc_1 \otimes L(E_t)) \circ (\nu_L \otimes L(E_t))$ and 
\[
\Sigma^{\gr}_X \circ \zeta = -\mu\mu'\mu'' \circ (\cS \otimes \partial).
\]
We may thus complete
\eqref{diag:upsilons}
by setting $\Upsilon_2 = -\mu\mu'\mu''$. This finishes the proof.
\end{proof}

\begin{coro} \label{coro:uf}
Let $E$ and $F$ be finite graphs
with $E$ essential. If $f \colon L(E) \to L(F)$
is a graded algebra homomorphism, then the 
chain of isomorphisms
\[
\kkgr(L(E), L(F)) \xto{\eqref{bij2}} \kkgr(\ell, L(F) \otimes \Omega L(E_t)) 
\xto{\eqref{thm:rep-grunits}} KH_1((L(F) \otimes L(E_t))_0).
\]
maps $j(f)$ to the class of the unit
\begin{equation}\label{def:uf}
u_f := 1 \otimes 1 - \sum_{v \in E^0} f(v) \otimes v + \sum_{e \in E^1} f(e) \otimes e_t^\ast.
\end{equation}
\end{coro}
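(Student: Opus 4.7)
The plan is to unwind the definition of \eqref{bij2} applied to $j(f)$, rewrite the result as the representative of a unit via Theorem \ref{thm:rep-grunits}, and finally verify that this unit is precisely $u_f$. Before beginning, I would note that since $E$ is essential, the dual graph $E_t$ has no sinks, so $L(E_t)$ is strongly graded by Remark \ref{rmk:lpa-str}, and hence so is $L(F) \otimes L(E_t)$ by Example \ref{ex:tensor-strong}. This validates the second isomorphism in the chain under consideration and allows us to invoke Theorem \ref{thm:rep-grunits} for $L(F) \otimes L(E_t)$ and the degree-zero unit $u_f$.

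Substituting $R = \ell$, $S = L(F)$, and $\xi = j(f)$ into \eqref{bij2} yields $(j(f) \otimes \Omega L(E_t)) \circ \nabla$, where $\nabla = \Omega(\nu_{u_1}) \circ \wp$ followed by the flip $\Omega \otimes L(E) \otimes L(E_t) \cong L(E) \otimes \Omega L(E_t)$ (which is a genuine graded algebra isomorphism since $\Omega$ is trivially graded). Using naturality of the flip and functoriality of $\Omega \otimes -$, I would push $j(f)$ inside to rewrite the composition as
\[
\Omega\bigl((f \otimes L(E_t)) \circ \nu_{u_1}\bigr) \circ \wp,
\]
now viewed as a morphism $\ell \to \Omega(L(F) \otimes L(E_t))$ via a further flip $L(F) \otimes \Omega L(E_t) \cong \Omega(L(F) \otimes L(E_t))$.

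The crucial algebraic identification is $(f \otimes L(E_t)) \circ \nu_{u_1} = \nu_{u_f}$, which reduces to the direct computation
\[
(f \otimes L(E_t))(u_1) = 1 \otimes 1 - \sum_{v \in E^0} f(v) \otimes v + \sum_{e \in E^1} f(e) \otimes e_t^\ast = u_f,
\]
together with the fact that $\nu_{p,u}$ (with $p = 1$) depends only on the image of $t$. Unraveling \eqref{map:wpnat}–\eqref{map:wp}, the image of $j(f)$ under \eqref{bij2} is therefore $\wp_{\ell, L(F) \otimes L(E_t)}(\nu_{u_f})$, which by Theorem \ref{thm:rep-grunits} corresponds to $[u_f] \in KH_1((L(F) \otimes L(E_t))_0)$.

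The main obstacle is bookkeeping the various flip identifications implicit in the definitions of $\nabla$, $\wp$, and the adjunction isomorphism $\cU$; once these are carefully tracked, the result is a formal consequence of the naturality of $\wp$ with respect to the algebra homomorphism $f \otimes L(E_t)$ and of Theorem \ref{thm:rep-grunits}.
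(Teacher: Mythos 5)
Your proposal is correct and follows essentially the same route as the paper's proof: unwind \eqref{bij2}, use naturality to identify the image of $j(f)$ with the image of $[u_1]$ under $KH_1(f \otimes L(E_t))$ (equivalently $\wp_{\ell,\,L(F)\otimes L(E_t)}(\nu_{u_f})$), and then compute directly that applying $f \otimes L(E_t)$ to $u_1$ yields $u_f$. Your preliminary remark that $L(F)\otimes L(E_t)$ is strongly graded is a justification the paper leaves implicit, but it does not change the argument.
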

\begin{proof} The map in question 
is given by tensoring $\xi \in \kkgr(L(E), L(F))$
by $\Omega L(E)$, precomposing by 
\[
\ell \xto{u_1} \Omega(L(E) \otimes L(E_t))
\iso L(E) \otimes \Omega L(E_t)
\]
and postcomposing again with the inverse of the isomorphism $\Omega(L(E) \otimes L(E_t)) \iso L(E) \otimes \Omega L(E_t)$.
This coincides with the composition $(\Omega f \otimes L(E_t)) \circ u_1$; that is, 
the map corresponding to the image of $[u_1] \in KH_1((L(E) \otimes L(E_t))$
under $KH_1(f \otimes L(E_t))$. It remains to note that 
$u_f=(f \otimes L(E_t))(u_1)$.
\end{proof}

\begin{conv}
For the rest of the article, we will assume that $G = \Z$; in 
particular, we will use additive notation for the sum of degrees of 
homogeneous elements.
\end{conv}

\subsection{Graded Morita invariance and source elimination}

We record some observations on how one can extend Poincaré duality to non-necessarily essential graphs. We first recall
the notions of full idempotents and 
source elimination (\cite{flowinv}*{Definition 1.2}).

An idempotent $p$ of a unital ring
$R$ is \emph{full} if $RpR = R$, that is, 
if there exists $n \in \N$ and
$x_1, \ldots, x_n, y_1, \ldots, y_n \in R$ such that
\[
\sum_{i \in \N} y_i p x_i = 1.
\]
Notice that any unital ring homomorphism maps 
full idempotents to full idempotents.
Let $E$ be a graph and $v \in \sour(E) \setminus \sink(E)$. The
\emph{source elimination graph} $E_{\setminus v}$ is given by 
\[
E_{\setminus v}^0 = E^0 \setminus\{v\}, \quad E_{\setminus v}^1 = E^1 \setminus s^{-1}(v), \qquad
r_{E\setminus v} = r, \quad s_{E\setminus v} = s|_{E^0\setminus\{v\}}.
\]
By \cite{kkhlpa}*{Lemma 8.3}, the element $p = 1-v$ is a full homogeneous idempotent of $L(E)$ and 
the image of the graph inclusion induced map $\inc_v \colon L(E_{\setminus v}) \to L(E)$
is exactly $p L(E) p$. 
As noted in \cite{haz-vnreg}*{p. 230}, when $\ell$ is a field
source elimination preserves the (graded) Morita equivalence class of a Leavitt path algebra; 
this stems from the fact that, by the graded uniqueness theorem, the map $\inc_v$ is injective and 
thus $L(E_{\setminus v}) \cong pL(E)p$.
In this direction, we wish to prove that $\inc_v$ is a $\kkgr$-isomorphism. This is implied by the result below.

\begin{prop} \label{prop:gr-fullcor}
Let $R$ be a graded algebra. If $p \in R$ a homogeneous full idempotent of degree zero, 
then the inclusion $pRp \subset R$ is a $\kkgr$-isomorphism.
\end{prop}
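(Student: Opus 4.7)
The plan is to exhibit an explicit $\kkgr$-inverse for $\iota$ via a Morita-type construction built from the fullness data. Since $1$ and $p$ are homogeneous of degree zero, I may decompose $\sum_i y_i p x_i = 1$ into homogeneous pieces, obtaining homogeneous $x_i, y_i \in R$ with $|x_i| + |y_i| = 0$; set $d_i = |x_i|$. Form the graded set $X$ of cardinality equal to the number of terms with $|i| = -d_i$, fix any $x_0 \in X$, and put $\tilde X = X \sqcup \{\ast\}$ with $|\ast| = |x_0|$. A direct check shows that
\[
\alpha(r) = \sum_{i,j} \elmat_{i,j} \otimes p x_i r y_j p, \qquad \beta(\elmat_{i,j} \otimes a) = y_i a x_j
\]
define graded algebra homomorphisms $\alpha \colon R \to M_X(pRp)$ and $\beta \colon M_X(pRp) \to R$ satisfying $\beta \circ \alpha = \id_R$.

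Inside $C := M_{\tilde X}(R)$, introduce the elements $V := \sum_i \elmat_{i,\ast} \otimes p x_i$ and $U := \sum_i \elmat_{\ast, i} \otimes y_i p$, homogeneous of degrees $-|\ast|$ and $|\ast|$ respectively. Denoting by $\iota_\ast^R \colon R \to C$ the corner inclusion at $\ast$ and by $j_X \colon M_X(pRp) \to C$ the block inclusion, one verifies $UV = \elmat_{\ast,\ast} \otimes 1_R$ and $V \cdot \iota_\ast^R(r) \cdot U = j_X(\alpha(r))$. These identities place $\alpha$ in the framework of Definition \ref{def:ad} as the ad-map $\ad(V, U)$ from $\iota_\ast^R(R)$ to $j_X(M_X(pRp))$ inside $C$. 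Proposition \ref{prop:ad}(i) then yields $j(j_X \circ \alpha) = j(\iota_\ast^R)$ in $\kkgr$, and since both $j_X$ and $\iota_\ast^R$ are $\kkgr$-isomorphisms by $G$-stability, so is $\alpha$. Combined with $\beta \alpha = \id_R$, this gives $j(\beta) = j(\alpha)^{-1}$.

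To conclude that $\iota$ itself is a $\kkgr$-iso, I would then consider the corner inclusion $\iota_{x_0}^{pRp} \colon pRp \to M_X(pRp)$, which is a $\kkgr$-isomorphism by $G$-stability. The compositions $j_X \circ \iota_{x_0}^{pRp}$ and $\iota_{x_0}^R \circ \iota$—where $\iota_{x_0}^R \colon R \to C$ is the corner inclusion at $x_0$—coincide as graded algebra maps, both sending $a$ to $\elmat_{x_0, x_0} \otimes a$. By Proposition \ref{prop:incx=incy} and the equality $|x_0| = |\ast|$, one has $j(\iota_{x_0}^R) = j(\iota_\ast^R)$, so combining with the conclusion of the previous step:
\[
j(j_X) \circ j(\iota_{x_0}^{pRp}) = j(\iota_{x_0}^R) \circ j(\iota) = j(\iota_\ast^R) \circ j(\iota) = j(j_X) \circ j(\alpha) \circ j(\iota).
\]
Cancelling the isomorphism $j(j_X)$ gives $j(\iota_{x_0}^{pRp}) = j(\alpha) \circ j(\iota)$, whence $j(\iota) = j(\beta) \circ j(\iota_{x_0}^{pRp})$ is a composition of $\kkgr$-isomorphisms.

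The main obstacle is setting up the ad-map of the second paragraph so that Proposition \ref{prop:ad} applies cleanly: one must take care in choosing the degree of $\ast$ so that both the homogeneity of $U, V$ and the subsequent identification $j(\iota_{x_0}^R) = j(\iota_\ast^R)$ in the last step are simultaneously available. Once the ad equivalence is in place, the concluding chain of identifications is essentially formal.
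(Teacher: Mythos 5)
Your map $\alpha$ and the ad-map $\ad(V,U)$ inside $M_{\tilde X}(R)$ reproduce, in slightly different packaging, exactly the paper's construction ($\ad(c,r)$ with $c=\sum_j\elmat_{j,1}x_j$, $r=\sum_j\elmat_{1,j}y_j$), and that half of the argument is correct: it gives $j(j_X)\circ j(\alpha)=j(\iota_\ast^R)$. The gap is the two invocations of invertibility of $j(j_X)$. The block inclusion $j_X\colon M_X(pRp)\to M_{\tilde X}(R)$ is \emph{not} a $\kkgr$-isomorphism ``by $G$-stability'': $G$-stability only concerns inclusions $M_XA\to M_{X\sqcup Y}A$ with the coefficient algebra \emph{fixed}, whereas $j_X$ factors as $M_X(\iota)$ followed by such an inclusion, and $M_X(\iota)$ is a $\kkgr$-isomorphism if and only if $\iota$ is (conjugate by the corner inclusions, which \emph{are} handled by $G$-stability). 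So asserting that $j(j_X)$ is invertible is equivalent to the proposition you are proving; both ``so is $\alpha$'' in your second paragraph and ``cancelling the isomorphism $j(j_X)$'' in the third are circular. What your identity actually yields is only that $j(\alpha)$ is a split monomorphism and $j(j_X)$ a split epimorphism, which is not enough to conclude.

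The missing ingredient — and the way the paper closes the loop — is a \emph{second} ad-map argument carried out entirely inside $M_{\tilde X}(pRp)$: with $V'=\sum_i\elmat_{i,\ast}\otimes px_ip$ and $U'=\sum_i\elmat_{\ast,i}\otimes py_ip$ one has $U'V'=\elmat_{\ast,\ast}\otimes p$ and $\ad(V',U')=\alpha\circ\iota\colon pRp\to M_X(pRp)$, so Proposition \ref{prop:ad}(i) together with Proposition \ref{prop:incx=incy} gives $j(\alpha\circ\iota)=j(\iota_{x_0}^{pRp})$. Combined with your first identity this makes $j(\alpha)$ both split mono and split epi, hence an isomorphism, and then $j(\iota)=j(\alpha)^{-1}\circ j(\iota_{x_0}^{pRp})$ is an isomorphism with no cancellation of $j(j_X)$ ever needed. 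Separately, your $\beta$ is not an algebra homomorphism in general (multiplicativity would require $x_jy_k=\delta_{j,k}p$, which fullness does not provide), so $j(\beta)$ is undefined as written; this is harmless, since once $j(\alpha)$ is known to be invertible $\beta$ plays no role, but it should be removed from the argument.
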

\begin{proof} 
We adapt \cite{kkhlpa}*{Lemma 8.12}.
Let $x_1, \ldots, x_n, y_1, \ldots, y_n \in R$ 
be such that $1 = y_1px_1 + \cdots y_n p x_n$. Taking degree zero components at both
sides of this equality, enlarging $n$ if necessary 
we may assume that all $x_i, y_j$ are homogeneous such that $|x_i| = - |y_i|$. 
Substituting $x_i$ by $px_i$ and $x_i^\ast$ by $y_ip$ if necessary, 
we may also assume that $x_i \in pR$ and $y_i \in Rp$.
Put $d_i = |y_i|$. For the rest of the proof, we shall consider
the grading on $M_n$ given by the assingment $i \mapsto d_i$. 
Consider the elements
\[
c = \sum_{j = 1}^n \elmat_{j,1} x_j \in M_n pR, \qquad r = \sum_{j=1}^n \elmat_{1,j} y_j \in M_n Rp
\]
and notice that these elements are homogeneous and that $|c||r|=1$
and that $c \elmat_{1,1}M_nR\elmat_{1,1} r \subset M_n pRp$. Further, 
sice $rc = \elmat_{1,1}$, it follows that $wrcw' = ww'$
for each $w,w'\in \elmat_{1,1}M_nR\elmat_{1,1}$. We thus have 
a well-defined graded homomorphism
\[
\ad(c,r) \colon \elmat_{1,1}M_nR\elmat_{1,1} \to M_n pRp, \qquad w \mapsto cwr,
\]
and, by Proposition \ref{prop:ad}, upon composing with the inclusion 
$M_n(\inc_p) \colon M_n pRp \hookrightarrow M_n R$ it conicides in $\kkgr$ with the inclusion $\elmat_{1,1}M_nR\elmat_{1,1} \hookrightarrow M_n R$. 
Therefore, if we define 
\[
\phi \colon R \cong \elmat_{1,1}M_nR\elmat_{1,1} \xto{ad(c,r)} M_n pRp,
\]
it satisfies $j(M_n(\inc_p)\phi) = j(\iota_1^R)$. In particular $M_n(\inc_p)\phi$
is a $\kkgr$-isomorphism. 
A similar argument applied to $cp, rp \in M_n pRp$ says that the composition
\[
\phi \inc_p = pRp \cong \elmat_{1,1}M_n pRp\elmat_{1,1} \xto{\ad(cp,pr)} M_n pRp
\]
agrees in $\kkgr$ with $\iota_1^{pRp}$; hence $\phi \inc_p$ is also a $\kkgr$-isomorphism. Finally, this says that 
$\phi$ is a $\kkgr$-isomorphism which, in turn, proves that 
$j(\inc_p) = j(\phi)^{-1}j(\iota_1^{pRp})$ is an isomorphism as desired.
\end{proof}

\begin{coro} \label{coro:s-elim-kkg}
Assume that $\ell$ is a field. 
If $E$ is a graph with at least two vertices and
$v \in \sour(E) \setminus \sink(E)$, then the inclusion $L(E_{\setminus v}) \to L(E)$
is a $\kkgr$-isomorphism.
\qed
\end{coro}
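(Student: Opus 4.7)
The plan is to reduce the corollary to Proposition \ref{prop:gr-fullcor} by identifying $L(E_{\setminus v})$ with a full corner of $L(E)$.

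First I would invoke the observation, recalled from \cite{kkhlpa}*{Lemma 8.3} in the paragraph preceding Proposition \ref{prop:gr-fullcor}, that $p := 1 - v$ is a full homogeneous idempotent of degree zero in $L(E)$ (this uses $E^0 \neq \{v\}$, hence the two-vertices hypothesis), and that the image of $\inc_v \colon L(E_{\setminus v}) \to L(E)$ is precisely $p L(E) p$. So $\inc_v$ factors as a surjection $L(E_{\setminus v}) \twoheadrightarrow p L(E) p$ followed by the inclusion $p L(E) p \hookrightarrow L(E)$.

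Next I would argue that the surjection $L(E_{\setminus v}) \twoheadrightarrow p L(E) p$ is actually an isomorphism. This is where the hypothesis that $\ell$ be a field enters: by the graded uniqueness theorem for Leavitt path algebras (as indicated in the paragraph preceding the corollary, via the reference to \cite{haz-vnreg}*{p. 230}), the graded homomorphism $\inc_v$ is injective because it sends each vertex $w \in E^0 \setminus \{v\}$ to a nonzero homogeneous idempotent. Hence $L(E_{\setminus v}) \cong p L(E) p$ as graded algebras.

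Finally, combining the identification $L(E_{\setminus v}) \cong p L(E) p$ with Proposition \ref{prop:gr-fullcor} applied to $R = L(E)$ and the degree-zero full idempotent $p = 1 - v$, the inclusion $p L(E) p \hookrightarrow L(E)$ is a $\kkgr$-isomorphism, and so $\inc_v$ is a $\kkgr$-isomorphism. The only potentially delicate point is the invocation of graded uniqueness to conclude injectivity of $\inc_v$, but this is a standard tool in the Leavitt path algebra literature and the paper already relies on it; the rest is a straightforward assembly of Proposition \ref{prop:gr-fullcor} with the identification of $p L(E) p$ as the image of $\inc_v$.
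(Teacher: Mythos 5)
Your proposal is correct and follows exactly the route the paper intends: the corollary is stated with a \qed precisely because it is the combination of the preceding discussion (fullness of $p=1-v$, identification of the image of $\inc_v$ with $pL(E)p$, and injectivity via the graded uniqueness theorem over a field) with Proposition \ref{prop:gr-fullcor}. No gaps.
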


\begin{rmk} By \cite{gramor}*{Theorem 5.3}, two graded unital rings
are graded Morita equivalent if and only if there exists a graded set structure
on $\N$, say $X = (\N, d), d \colon \N \to G$, such that $M_X S \cong M_\infty R$
as graded algebras. (As per our conventions, here $M_\infty R$ means $M_\N R$ where $\N$
is equipped with the trivial grading). In particular, this is a way to shows that $G$-stable functors are Morita invariant.
\end{rmk}

The results of this section say that given a regular graph $E$, upon finitely many source eliminations we may find an essential graph $F$ such that we have a 
$\kkgr$-isomorphism $L(F) \to L(E)$. 
Theorem \ref{thm:poinc} then implies that tensoring by $L(E)$ is left adjoint 
to tensoring by $\Omega L(F_t)$.

\section{The relationship between \topdf{$\kkgr$}{kkgr}-maps and graded algebra maps} \label{sec:kkgr-to-alg}

Consider $E$ a finite graph and the Cohn extension
\eqref{ext:cohn}. Write $\partial_E$ and $\delta_E$ for its left and right boundary. By \cite{arcor}*{Corollary 11.9}, in $\kkgr$ we have a triangle
\begin{equation}\label{triang:cohn}
\ell^{\reg(E)} \xto{I-\sigma A_E^t} \ell^{E^0} \xto{\inc} L(E).
\end{equation}
In particular, for a given graded algebra $R$, applying $\kkgr(-, R)$
to \eqref{triang:cohn} yields an exact sequence
\[
\begin{tikzcd}
 \kkgr(\Sigma^{E^0}, R) \arrow{r}{(\Sigma(I-\sigma A_E^t))^\ast} &
\kkgr(\Sigma^{\reg(E)}, R) \arrow[out=0, in=180]{d}{\delta_E^\ast} & &\\
& \kkgr(L(E), R) \arrow[out=0, in=180]{d}{\inc^\ast} &
\\ & \kkgr(\ell^{E^0}, R) \arrow{r}{(I-\sigma A_E^t)^\ast}& 
\kkgr(\ell^{reg(E)}, R).  
\end{tikzcd}
\]

From this sequence we can obtain, by taking appopriate 
kernels and cokernels, a short exact sequence 
involving $\kkgr(L(E),R)$. This 
is what in the ungraded case is referred to as 
the Universal Coefficient Theorem (UCT). Recall 
that the \emph{dual Bowen-Franks module} of $E$
is $\gBF^\vee(E) = \coker(I^t - \sigma A_E)$; 
when $E$ is essential, it follows that $\gBF(E_t) = \gBF^\vee(E)$.
With this notation in place, we state a theorem which, in particular, contains a graded version of the UCT.

\begin{thm}[UCT]\label{thm:uct}
Let $E$ be a finite graph and $R$ a graded algebra. There is a diagram
with exact top-row
\[
\begin{small}
\begin{tikzcd}
    0 \arrow{r} & \gBF^\vee(E) \otimes_{\Z[\sigma]} KH_1^{\gr}(R) \arrow{r}{d} & \kkgr(L(E), R) \arrow{r}{\ev}
    & \hom_{\Z[\sigma]}(\gBF(E), KH^{\gr}_0(R)) \arrow{r} & 0\\
    & & \text{$[L(E), R]$} \arrow{u}{\overline j} \arrow[bend right = 30]{ru}{\can^\ast \circ KH^{\gr}_0} & &
\end{tikzcd}    
\end{small}
\]
such that:
\begin{enumerate}[i)]
    \item The map $\overline j$ is the factorization of $j$ through the category 
    of graded $\ell$-algebras with graded homorphisms up to polynomial homotopy.
    \item The map $\ev$ corresponds to the assignment between hom-sets 
    of the functor $\kkgr(\ell, -) = KH_0^{\gr}$, followed by precomposition by the
    canonical map $\can \colon \gBF(E) \to KH_0^{\gr}(L(E))$.
    \item  The map $d$ is obtained from the right boundary map $\delta_E \colon L(E) \to \Sigma^{\reg(E)}$, by passing the composition
    \[
        \Z[\sigma]^{\reg(E)} \otimes_{\Z[\sigma]} KH^{\gr}_1(R)
        \iso \kkgr(\Sigma^{\reg(E)}, L(F)) \xto{\delta^\ast} \kkgr(L(E), L(F))
    \]
    to the quotient module $\gBF^\vee(E) \otimes_{\Z[\sigma]} KH_1^{\gr}(R)$. 
    \item If $E$ is an essential graph, 
    then for any $v \in E^0$ and unit $z \in R$ represented by a map $\xi_z \colon \ell \to \Omega R$, the isomorphism $\kkgr(L(E), R) \cong \kkgr(\ell,\Omega(R \otimes L(E_t))
    \cong KH_1^{\gr}(R \otimes L(E_t))$ maps $d(v \otimes z)$ to $[1 \otimes 1 - 1\otimes v + z \otimes v]$.
\end{enumerate}
\end{thm}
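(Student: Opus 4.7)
The plan is to apply the cohomological functor $\kkgr(-, R)$ to the triangle \eqref{triang:cohn} and to identify the resulting terms. For any finite set $S$, there are natural $\Z[\sigma]$-module isomorphisms $\kkgr(\ell^S, R) \cong KH_0^{\gr}(R)^S$ (from $KH_0^{\gr} = \kkgr(\ell, -)$) and $\kkgr(\Sigma\ell^S, R) \cong KH_1^{\gr}(R)^S$ (via the adjunction of Theorem \ref{thm:loop-adj-or}). Setting $\mu := I - \sigma A_E^t$, the map $\mu^\ast$ becomes the dual of the matrix action of $\mu$ on $KH_0^{\gr}(R)$, whose kernel is $\hom_{\Z[\sigma]}(\gBF(E), KH_0^{\gr}(R))$ by the definition of $\gBF(E) = \coker\mu$. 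Dually, using $\Z[\sigma]^S \otimes_{\Z[\sigma]} N \cong N^S$ and the right-exactness of the tensor product, the cokernel of $(\Sigma\mu)^\ast$ identifies with $\gBF^\vee(E) \otimes_{\Z[\sigma]} KH_1^{\gr}(R)$. Splicing these into the long exact sequence induced by the triangle yields the exact top row.

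Parts (i)--(iii) are then essentially formal. For (i), the factorization $\bar j$ exists because $j$ is homotopy invariant. For (ii), $\ev$ is by construction the composition of the functor $\kkgr(\ell, -) = KH_0^{\gr}$ with $\can^\ast$; on $j(f)$ this gives $KH_0^{\gr}(f) \circ \can = \can^\ast \circ KH_0^{\gr}(f)$. For (iii), $d$ is by definition the factorization of $\delta_E^\ast$ through the cokernel identified above.

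Part (iv) is the main technical step, and where the difficulty lies. By Theorem \ref{thm:poinc} and Corollary \ref{coro:uf}, the Poincaré duality isomorphism sends $\xi \in \kkgr(L(E), R)$ to the image of the class $[u_1]$ under $KH_1^{\gr}(\xi \otimes L(E_t))$, where $u_1 = 1 \otimes 1 - \sum_{v \in E^0} v \otimes v + \sum_{e \in E^1} e \otimes e_t^\ast$ is the canonical unit of $(L(E) \otimes L(E_t))_0$. For $\xi = d(v \otimes z)$, the compatibility of tensor products with left and right boundary maps (Theorem \ref{thm:tensor-compatibility} and Remark \ref{rmk:right-boundary-tensor}) reduces the problem to evaluating $\delta_E \otimes L(E_t)$ on $[u_1]$, projecting to the $v$-coordinate of $\bigoplus_v KH_0^{\gr}(L(E_t))$, and then applying the map corresponding to $\xi_z$. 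Via Proposition \ref{prop:gr-index} and the Cohn extension of $L(E) \otimes L(E_t)$, a direct calculation --- using that $\hat u_1 := 1 - p + \sum_e e \otimes e_t^\ast$ is a partial isometry lift of $u_1$ in $C(E) \otimes L(E_t)$ with orthogonal complements $1 - \hat u_1 \hat u_1^\ast$ and $1 - \hat u_1^\ast \hat u_1$ prescribed by the Cohn relations --- identifies the $v$-component as $[1 \otimes v]$. Applying Theorem \ref{thm:rep-grunits} then yields the class of the unit $1 \otimes 1 - 1 \otimes v + z \otimes v$, in parallel with the ungraded argument of \cite{kkhlpa}*{Lemma 13.1}.
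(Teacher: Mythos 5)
Your proposal is correct and follows essentially the same route as the paper: the top row is obtained by splicing the long exact sequence that $\kkgr(-,R)$ associates to the Cohn triangle \eqref{triang:cohn}, and part (iv) is proved by combining Poincaré duality with the computation of the boundary of $[u_1]$ along the Cohn extension tensored with $L(E_t)$ and the representation of units as $\kkgr$-morphisms. The only differences are cosmetic: you spell out the computation of the boundary of $[u_1]$ via the explicit isometry lift $\hat u_1$ and Proposition \ref{prop:gr-index} (a step the paper outsources to the ungraded case in \cite{kkhlpa}), and you cite Theorem \ref{thm:rep-grunits} where the paper invokes its consequence, Corollary \ref{lem:tensor-unit-idem}.
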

\begin{proof} We prove iv), the other assertions are proved in the same way as in ungraded case; see e.g. \cite{kklpa1}*{Corollary 7.20} and \cite{kkhlpa}*{Theorem 12.1}.
Assume that $E$ is essential. Recall that $\xi_z$ corresponds to an arrow 
$\cV_{\ell, R}(\xi_z) \colon \Sigma \to R$ via the assignment \eqref{def:cV}.
Thus, if $p_v \colon \ell^{E^0} \to \ell$ denotes the projection
to the $v$-th coordinate, then $d(v \otimes \xi_z)$ coincides with
the composition
\[
L(E) \xto{\delta} \Sigma^{E^0} \xto{\Sigma p_v} \Sigma \xto{\cV_{\ell, R}(\xi_z)} R.
\]
Recall also that the arrow above is assigned to an element 
$\kkgr(\ell,\Omega(R \otimes L(E_t))$ in the following way:
first, one tensors by $L(E_t)$; next one applies the loop functor $\Omega$ and,
at last, one precomposes by the arrow 
$\xi_{u_1} \colon \ell \to \Omega(L(E) \otimes L(E_t))$
given by the degree zero unit $u_1 \in L(E) \otimes L(E_t)$. 
Call this element $\eta := \Omega(d(v \otimes \xi_z) \otimes L(E_t)) \circ
\xi_{u_1}$.

Notice that, by Remark \ref{rmk:right-boundary-tensor}
and the definition of \eqref{def:cV}, we have
$\delta_E \otimes L(E_t) = \delta_{\cC(E) \otimes L(E_t)}$
and $\cV_{\ell, R}(\xi_z) \otimes L(E_t) = \cV_{\ell, R\otimes L(E_t)}(\xi_z \otimes L(E_t))$. We shall drop the subscripts under $\cV$ 
to ease the notation. 
Consider now the following diagram and note that 
the composition of the top row agrees with $\eta$: 
\[
\begin{small}
\begin{tikzcd}[row sep = large, column sep = large]
\ell \arrow{r}{\xi_{u_1}} \arrow[equals]{d} &
\Omega L(E) \otimes L(E_t) \arrow[equals]{d} \arrow{r}{\Omega \delta_{\cC(E)\otimes L(E_t)}} 
& \Omega\Sigma \otimes \ell^{E^0} \otimes L(E_t) \arrow{r}{\Omega \Sigma p_v \otimes L(E_t)} & \Omega\Sigma \otimes L(E_t) 
\arrow{r}{\Omega \cV(\xi_z \otimes L(E_t))} &  
\Omega R \otimes L(E_t)\\
\ell \arrow{r}{\xi_{u_1} } &
\Omega(L(E) \otimes L(E_t)) \arrow{r}{\partial_{\cC(E)\otimes L(E_t)}} 
& \ell^{E^0} \otimes L(E_t) \arrow{u}{\xi_L \otimes \ell^{E^0} \otimes L(E_t)} \arrow{r}{p_v \otimes L(E_t)} 
& L(E_t) \arrow{u}{\xi_L \otimes L(E_t)}\arrow{r}{\xi_z \otimes L(E_t)} &  
\Omega R \otimes L(E_t) \arrow[equals]{u}
\end{tikzcd}
\end{small}
\]

As in the ungraded case (\cite{kkhlpa}*{proof of Lemma 12.3}), one checks that
the boundary of $u_1$ is the class of $\sum_{v\in E^0}{\chi_v \otimes v}$,
and thus
\[
\eta = \xi_z \otimes L(E_t) \circ p_v \circ \left(\sum_{v \in E^0} \chi_v \otimes v\right)
= \xi_z \otimes \inc_v.
\]
Finally, by Lemma \ref{lem:tensor-unit-idem} $\xi_u \otimes \inc_v = \xi_{1 \otimes v, z \otimes v}$ which 
corresponds to the class in $KH_1^{\gr}(R \otimes L(E_t))$ of the unit $1 \otimes 1 - 1 \otimes v + z \otimes v$ as desired.
\end{proof}

We now want to use the UCT to investigate the relationship
between graded algebra maps between Leavitt path algebras
and morphisms between them in $\kkgr$. 
First, we set some conventions. 

\begin{defn}\label{defn:primitive}
A regular graph $E$ is \emph{primitive} if its 
adjacency matrix is primitive
(\cite{coding}*{Definition 4.5.7 and Theorem 4.5.8}), that is, if
there exists $N \ge 1$  such that $(A_E^N)_{v,w} > 0$
for all $v,w \in E^0$. 
\end{defn}

\begin{rmk} If $E$ is a primitive graph, 
then by definition there exists
$N \ge 1$ such that there is a path 
of length $N$ between each pair of vertices.
In particular, primitive graphs are essential. 
\end{rmk}

Our interest for primitive graphs stems
from the following.

\begin{prop} \label{prop:idem-primi} Assume that $\ell$ is a field. If $E$ is a primitive graph, then
for each $e \in E^1$ the idempotent $ee^\ast \in L(E)_0$
is full as an element of $L(E)_0$.
\end{prop}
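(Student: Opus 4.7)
The plan is to show that every vertex $v \in E^0$ lies in the two-sided ideal $L(E)_0 \cdot ee^* \cdot L(E)_0$; since $1_{L(E)_0} = \sum_{v \in E^0} v$, this will prove fullness. The key input is primitivity: fix $N \geq 1$ with $(A_E^N)_{u,w} > 0$ for all $u, w \in E^0$, so that for every $w \in E^0$ there exists at least one path of length $N$ from $r(e)$ to $w$.

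Next, I would invoke the ultramatricial description of $L(E)_0$ recalled in \cite{amp}*{proof of Theorem 5.3}: for each $n \geq 0$, the subspace $L_n := \lspan\{\alpha\beta^* : |\alpha|=|\beta|=n\}$ is a unital subalgebra of $L(E)_0$, the union $\bigcup_n L_n$ equals $L(E)_0$, and there is an isomorphism $L_n \cong \bigoplus_{w \in E^0} M_{P_n(w)}(\ell)$ whose matrix units are the products $\alpha\beta^*$ with $r(\alpha)=r(\beta)=w$, where $P_n(w) := \{\alpha : |\alpha|=n,\ r(\alpha)=w\}$. Applying relation \eqref{ck2} at $r(e)$ a total of $N$ times gives $r(e) = \sum_{|\gamma|=N,\, s(\gamma)=r(e)} \gamma\gamma^*$, and therefore
\[
ee^* \;=\; e\cdot r(e)\cdot e^* \;=\; \sum_{\substack{|\gamma|=N\\ s(\gamma)=r(e)}}(e\gamma)(e\gamma)^* \;\in\; L_{N+1}.
\]

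Each summand $(e\gamma)(e\gamma)^*$ above is a matrix unit of $L_{N+1}$ living in the component indexed by $r(\gamma) \in E^0$. By the choice of $N$, as $\gamma$ ranges over length-$N$ paths from $r(e)$ the endpoints $r(\gamma)$ cover all of $E^0$, so $ee^*$ has a nonzero matrix-unit summand in every component $M_{P_{N+1}(w)}(\ell)$ of $L_{N+1}$. Since $\ell$ is a field, each of these matrix algebras is simple, and hence the two-sided ideal of $L_{N+1}$ generated by $ee^*$ is all of $L_{N+1}$. Iterating \eqref{ck2} once more gives $v = \sum_{|\alpha|=N+1,\, s(\alpha)=v}\alpha\alpha^* \in L_{N+1}$, so $v \in L_{N+1} \cdot ee^* \cdot L_{N+1} \subseteq L(E)_0 \cdot ee^* \cdot L(E)_0$, as required. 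I do not foresee a serious obstacle: the only substantive step is marrying the primitivity hypothesis to the ultramatricial picture of $L(E)_0$, which together force $ee^*$ to meet every matrix block of $L_n$ for $n$ large enough, after which simplicity of matrix algebras over a field closes the argument.
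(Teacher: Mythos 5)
Your proposal is correct and follows essentially the same route as the paper: decompose $L(E)_0$ as the increasing union of the matricial subalgebras $L(E)_{0,n}\cong\bigoplus_{w}M_{\cP_{w,n}}$, use \eqref{ck2} iterated $N$ times to write $ee^\ast$ as a sum of diagonal matrix units $(e\gamma)(e\gamma)^\ast$, and invoke primitivity to see that every block of $L(E)_{0,N+1}$ is hit, so that simplicity of matrix algebras over a field finishes the argument. The only cosmetic difference is that you verify fullness by exhibiting each vertex $v$ in the ideal generated by $ee^\ast$, whereas the paper reduces directly to fullness inside the unital subalgebra $L(E)_{0,N+1}$; both are fine.
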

\begin{proof} 
Recall that if we put
\[
L(E)_{0,n} = \mathrm{span}_\ell\{\alpha\beta^\ast : r(\alpha) = r(\beta), |\alpha| = |\beta| = n\}
\]
for each $n \ge 0$, then $L(E)_0 = \bigcup_{n \ge 0} L(E)_{0,n}$.
Recall also that, writing $\cP_{v,n}$ for the set of paths 
of length $n$ ending at a vertex $v$, there are isomorphisms
\begin{equation}\label{iso:L0n=matr}
L(E)_{0,n} \cong \bigoplus_{v \in E^0} M_{\cP_{v,n}}, \qquad \alpha\beta^\ast \mapsto \elmat_{\alpha, \beta} \in M_{\cP_{r(\alpha),n}}
\end{equation}
for each $n \ge 0$.

Since $L(E)$ is an inreasing union of its unital subalgebras $L(E)_{0,n}$, 
to see that $ee^\ast \in L(E)_0$
is full it suffices to see that it is so in $L(E)_{0,n}$ for some $n \in \N$.
Let $N \ge 1$ be such that 
there exists a path of length $N$ between 
every pair of vertices. Observe that
\[
ee^\ast = \sum_{s(\alpha) = r(e), |\alpha| = N} e\alpha(e\alpha)^\ast
= \sum_{v\in E^0} \sum_{s(\alpha) = r(e), |\alpha| = N, r(\alpha) =v} e\alpha(e\alpha)^\ast.
\]
Thus, under 
the isomorphism \eqref{iso:L0n=matr} applied to $n = N+1$, the idempotent $ee^\ast$
is mapped to a sum of diagonal matrices
\[
\sum_{v\in E^0} \sum_{s(\alpha) = r(e), |\alpha| = N, r(\alpha) =v} \elmat_{e\alpha,e\alpha}.
\]
Given that matrix rings over a field are simple algebras, to conclude
it suffices to prove that the coordinate of element above corresponding
to each algebra $M_{\cP_{v, N+1}}$ is non-zero. This amounts 
to showing that for each set $\{e\alpha : r(\alpha) = v, |\alpha| = N\}$ 
is non-empty,  which is implied by the fact that $(A_E^N)_{r(e),v} >0$
for all $v \in E^0$.
\end{proof}

\begin{rmk} In the proof of Proposition \ref{prop:idem-primi}, 
we only need that 
for each vertex $v$ in the graph $E$ there exists
some $N_v \ge 1$ such that the $v$-th row of $A_E^N$
has positive entires. However, if $E$ is essential,
this condition is equivalent to $E$ being primitive. 
Indeed, put $N = \max_{u \in E^0} N_u$
and fix $v,w \in E^0$. Since $E$ is essential, 
inductively we may find a path $\beta$ 
of length $N-N_w$ ending at $w$. 
By hypothesis we also have a path $\alpha$
of length $N_{s(\beta)}$ from $v$ to $s(\beta)$; hence $\alpha\beta$
is a path of length $N$ from $v$ to $w$. 
\end{rmk}

\begin{conv}\label{conv:regufield}
From now on, we shall assume that $\ell$
is a field and all graphs considered are primitive.
\end{conv}

Define
\[
\kkgr(L(E), L(F))_1 = \{\xi \in \kkgr(L(E), L(F)) : \ev(\xi) \text{
is an pointed preordered module morphism}\}
\]
and write $[L(E), L(F)]_1$ for the set of graded 
unital algebra homomorphisms $L(E) \to L(F)$ modulo graded polynomial homotopy. Our
next objective is to study the map
\begin{equation}\label{def:ucanmap}
\overline j \colon [L(E), L(F)]_1 \to \kkgr(L(E), L(F))_1.
\end{equation} 

To proceed further in the understanding of $\kkgr(L(E), L(F))_1$, we first need
to understand the $K_1^{\gr}$ group of a Leavitt path algebra. To do this, we 
first establish some remarks on ultramatricial
algebras and corner skew Laurent polynomial rings.

\subsection{\topdf{$K_1$}{K1} of ultramatricial algebras}\label{subsec:ultra}
As pointed out in \ref{rmk:lpa-str}, when $E$ is a regular graph its associated
Leavitt path algebra is strongly graded and thus $KH_1^{\gr}(L(E)) = KH_1^{\gr}(L(E)_0)$.
If in addition $\ell$ is a field, then we may replace $K$ for $KH$. This allows us to compute the graded $K$-theory of $L(E)$
in terms of its subalgebra of 
homogeneous elements of degree zero. 

The advantage of
this passage to $L(E)_0$ is that the latter algebra is 
\emph{ultramatricial}, that is, it is a countable increasing union of matricial algebras. 
For this reason, we wish
to prove some generalities regarding the first $K$-theory group of a unital
ultramatricial algebra. 
In what follows we will write $G_{\ab}$ for the abelianization of a group. In particular, if $R$ is a unital ring then $K_1(R) = GL(R)_{\ab}$. The field of two elements will be denoted $\F_2$.

The following observation is straightforward from the definition 
of ultramatricial algebra.

\begin{lem} \label{lem:ultra-comp}
Let $F, G \colon \Alg \to \cat{Grp}$ be two additive 
functors which preserve finite products and filtering colimits and let~$\eta \colon F \Rightarrow G$
be a natural transformation. 
The following statements are equivalent:
\begin{enumerate}[i)]
    \item For each unital ultramatricial algebra $R$, the map $\eta_R$ is an isomorphism.
    \item For each $n \in \N$, the map $\eta_{M_n(\ell)}$ is an isomorphism.
\end{enumerate}
\qed
\end{lem}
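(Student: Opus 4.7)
The plan is as follows. The implication (i) $\Rightarrow$ (ii) is immediate, since each matrix algebra $M_n(\ell)$ is itself unital and ultramatricial (view it as the constant filtered diagram with value $M_n(\ell)$), so the hypothesis of (i) specializes directly to the statement of (ii).

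For the converse, the strategy exploits the two-step structure of ultramatricial algebras: a matricial algebra is by definition a finite product $\prod_{i=1}^{k} M_{n_i}(\ell)$ of matrix algebras over $\ell$, and a unital ultramatricial algebra is a filtering colimit of matricial subalgebras. I would first handle matricial algebras: since both $F$ and $G$ preserve finite products, the naturality square of $\eta$ at $\prod_{i=1}^{k} M_{n_i}(\ell)$ decomposes as the product of the naturality squares at each factor $M_{n_i}(\ell)$, every one of which is an isomorphism by hypothesis (ii). Hence $\eta_A$ is an isomorphism for every matricial algebra $A$.

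Next, given a unital ultramatricial algebra $R$, present it as a filtering colimit $R = \colim_n A_n$ of its matricial subalgebras. Applying $F$ and $G$, which preserve filtering colimits, identifies $\eta_R$ with $\colim_n \eta_{A_n}$ via the natural comparison maps. Since filtering colimits in $\cat{Grp}$ preserve isomorphisms, and each $\eta_{A_n}$ is already known to be an isomorphism by the previous step, so is $\eta_R$.

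No serious obstacle is expected: the argument is a routine consequence of the categorical hypotheses on $F$, $G$, and $\eta$ together with the structural definition of ultramatricial algebra. The only mild subtlety is verifying that the preservation of finite products really does yield the factorwise decomposition of the naturality square at a matricial algebra, but this is automatic once one recalls that the product projections are natural and are respected by the functors.
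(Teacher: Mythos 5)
Your argument is correct and is precisely the routine verification the paper has in mind: the lemma is stated with its proof omitted as ``straightforward from the definition,'' and your two-step reduction (finite products for matricial algebras, then filtering colimits for the ultramatricial case) is exactly the intended argument.
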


\begin{prop} \label{prop:k1-ultra}
Assume that $\ell$ is a field different from $\F_2$. If $R$ is a unital ultramatricial $\ell$-algebra, then the canonical map $R^\times_{\ab} \to K_1(R)$ is an isomorphism. 
\end{prop}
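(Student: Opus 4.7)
The plan is to derive this from Lemma \ref{lem:ultra-comp} applied to the natural transformation
\[
\eta_R \colon R^\times_{\ab} \longrightarrow K_1(R) = \mathrm{GL}(R)_{\ab}
\]
induced by the inclusion $R^\times = \mathrm{GL}_1(R) \hookrightarrow \mathrm{GL}(R)$. For this I first need to verify that both functors $R \mapsto R^\times_{\ab}$ and $R \mapsto K_1(R)$, viewed as functors from unital $\ell$-algebras to abelian groups, are additive and preserve filtering colimits. The case of $K_1$ is standard. For $(-)^\times_{\ab}$, additivity is immediate from $(R \times S)^\times = R^\times \times S^\times$, and filtering colimits are preserved because for a filtered colimit of unital rings $R = \colim R_i$ any witness of invertibility $rs = 1 = sr$ already holds at some finite stage, so that $R^\times = \colim R_i^\times$; abelianization then commutes with the colimit as a left adjoint.

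Granting the lemma, I would reduce to showing that $\eta_{M_n(\ell)}$ is an isomorphism for each $n \ge 1$. Under the Morita identification $K_1(M_n(\ell)) \cong K_1(\ell) = \ell^\times$, a direct verification identifies $\eta_{M_n(\ell)}$ with the determinant homomorphism $\mathrm{GL}_n(\ell)_{\ab} \to \ell^\times$. The case $n = 1$ is trivial since $\mathrm{GL}_1(\ell) = \ell^\times$ is already abelian and the map in question is the identity. For $n \ge 2$, the claim becomes the classical computation $[\mathrm{GL}_n(\ell), \mathrm{GL}_n(\ell)] = \mathrm{SL}_n(\ell)$, valid for any field with at least three elements, which produces the required isomorphism $\mathrm{GL}_n(\ell)_{\ab} \iso \ell^\times$ via the determinant.

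The only nontrivial input is precisely where the hypothesis $\ell \neq \F_2$ is used: in expressing each elementary transvection $I + a E_{ij}$ as a commutator of the form $[D, I + b E_{ij}]$ in $\mathrm{GL}_n(\ell)$, with $D$ a diagonal matrix, one needs a scalar $d \in \ell^\times$ such that $d - 1$ is also a unit, and such a $d$ exists exactly when $|\ell| \ge 3$. Once this standard fact is in place, everything else is bookkeeping, and combining the two reductions gives the desired isomorphism.
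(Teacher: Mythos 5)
Your proposal is correct and follows essentially the same route as the paper: reduce to $R = M_n(\ell)$ via Lemma \ref{lem:ultra-comp}, then identify $\eta_{M_n(\ell)}$ with the determinant isomorphism $\mathrm{GL}_n(\ell)_{\ab} \to \ell^\times$ using $[\mathrm{GL}_n(\ell),\mathrm{GL}_n(\ell)] = \mathrm{SL}_n(\ell)$, which is where $\ell \neq \F_2$ enters. You additionally spell out the verification of the hypotheses of Lemma \ref{lem:ultra-comp} and the commutator identity $[D, I+bE_{ij}] = I + b(d-1)E_{ij}$, details the paper leaves implicit.
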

\begin{proof} By Lemma \ref{lem:ultra-comp}, it suffices to prove so for $R = M_n(\ell)$ for each $n \in \N$. Notice that $M_n(\ell)^\times = GL_n(\ell)$ and $[GL_n(\ell), GL_n(\ell)] = SL_n(\ell)$, since $\ell \neq \F_2$.  

We know that the (non-unital) inclusion 
of $\ell$ in the top-left corner induces an isomorphism in $K_1$, mapping 
$\lambda \in \ell^\times = K_1(\ell)$ to the class of $[I_n-\elmat_{1,1}+\lambda\elmat_{1,1}]$.
To conclude, we note that this isomorphism factors as the 
inverse of the 
determinant induced map $\det \colon GL_n(\ell)/SL_n(\ell) \to \ell^\times$
followed by the comparison map $GL_n(\ell)/SL_n(\ell) = M_n(\ell)^\times_{\ab} \to K_1(M_n(\ell))$.
\end{proof}

Given a functor $F \colon \Alg \to \cat{Grp}$, write 
\[
F^0(A) = F(\ev_1)(\ker(F(A[t]) \xto{F(\ev_0)} F(A))).
\]

Note that, since $\ev_1 \colon A[t] \to A$ is a retraction for any algebra $A$, the homomorphism $F(\ev_1)$ is a retraction; in particular it is surjective.
Hence, it maps the normal subgroup $\ker(F(\ev_0))$ of $F(A[t])$
to a normal subgroup of $F(A)$. This justifies the fact that
\[
\pi_0 F(A) := F(A)/F^0(A)
\]
is a group. Further, this assigment can be extended to a functor $\Alg \to \cat{Grp}$ by the universal properties of kernels and images.

In \cite{kklpa2}*{Proposition 2.8} Cortiñas and Montero show that the
Karoubi-Villamayor $K_1$-group of a purely 
infinite simple ring $R$ can be computed as $\pi_0 R^\times = R^\times/\{u(1) : u \in (R[t])^\times, u(0) = 1 \}$.
In our context, we obtain a similar conclusion for ultramatricial algebras. First, we need a lemma.

\begin{lem} If $\ell$ is a field, then
$(M_n(\ell)^\times)^{0} = SL_n(\ell)$.
\end{lem}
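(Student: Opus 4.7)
The plan is to prove the two inclusions separately, exploiting the fact that over a field $SL_n(\ell)$ is generated by elementary transvections.

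First I would handle the inclusion $(M_n(\ell)^\times)^0 \subseteq SL_n(\ell)$. Unwinding the definition of $F^0$ for $F = (-)^\times$, every element of the left-hand side has the form $v = u(1)$ for some $u \in M_n(\ell[t])^\times$ with $u(0) = I_n$. Taking determinants produces a unit $\det(u) \in \ell[t]^\times$, and since $\ell$ is a field these units are exactly the nonzero constants. Evaluating at $t = 0$ forces this constant to be $\det(I_n) = 1$, and then evaluating at $t = 1$ gives $\det(v) = 1$, so $v \in SL_n(\ell)$.

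For the reverse inclusion $SL_n(\ell) \subseteq (M_n(\ell)^\times)^0$, I would first note that $(M_n(\ell)^\times)^0$ is a subgroup of $GL_n(\ell)$, as the image under the group homomorphism $\ev_1 \colon GL_n(\ell[t]) \to GL_n(\ell)$ of the (normal) subgroup $\ker(\ev_0)$. It therefore suffices to verify the containment on a generating set, and the standard choice is the family of elementary transvections $I_n + \lambda\, \elmat_{i,j}$ with $i \neq j$ and $\lambda \in \ell$. Each such transvection equals $u(1)$ for the polynomial $u(t) = I_n + t\lambda\, \elmat_{i,j}$, which is invertible in $M_n(\ell[t])$ with explicit inverse $I_n - t\lambda\, \elmat_{i,j}$ and satisfies $u(0) = I_n$. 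Thus every elementary transvection lies in $(M_n(\ell)^\times)^0$, and so does all of $SL_n(\ell)$.

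I do not anticipate any real obstacle: both inclusions rest on elementary properties of the determinant and the generation of $SL_n$ by transvections over a field. The only thing one has to be a bit careful about is confirming that $(M_n(\ell)^\times)^0$ is genuinely a subgroup before invoking the generators of $SL_n(\ell)$, but this is immediate from the functorial definition, since images of subgroups under group homomorphisms are subgroups.
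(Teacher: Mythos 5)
Your proposal is correct and follows essentially the same route as the paper: the forward inclusion via the determinant being a unit of $\ell[t]$, hence a constant equal to $\det(u(0)) = 1$, and the reverse inclusion by realizing each elementary transvection $I_n + \lambda\,\elmat_{i,j}$ as the value at $t=1$ of the invertible polynomial matrix $I_n + t\lambda\,\elmat_{i,j}$. Your explicit remark that $(M_n(\ell)^\times)^0$ is a subgroup (so that checking generators suffices) is a point the paper leaves implicit, but it is the same argument.
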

\begin{proof} We prove both inclusions. Since $SL_n(\ell)$
is generated by elementary matrices, it suffices to show that
$I_n + \lambda\elmat_{i,j} \in (M_n(\ell)^\times)^{0}$ for each $\lambda \in \ell^\times$, for which it suffices to consider the elementary matrix
\[
I_n + \lambda t\elmat_{i,j} \in GL_n(\ell[t]) = M_n(\ell[t])^\times = (M_n(\ell)[t])^\times.
\]
For the converse, let $u \in M_n(\ell)[t] = M_n(\ell[t])$ be an invertible matrix
such that $u(0) = I_n$. We have to prove that $u(1) \in SL_n(\ell)$.
Since $\ell[t]$ is an integral domain, a matrix in $M_n(\ell[t])$ is 
a unit if and only if $\det(u) \in (\ell[t])^\times = \ell^\times$.
In particular $\det(u)$ is constant and thus
\[
\det(u(1)) = \det(u)(1) = \det(u)(0) = \det(u(0)) = \det(I_n) = 1.
\]
\end{proof}

\begin{prop} \label{prop:ultra-pi0}
Assume that $\ell$ is a field. 
If $\ell \neq \F_2 $, then the comparison map 
$R^\times_{ab} \to \pi_0 R^\times \to K_1(R)$ is 
an isomorphism for every unital ultramatricial algebra $R$.
If $\ell = \F_2$, then 
$K_1(R) = \pi_0 R^\times = 1$.
\end{prop}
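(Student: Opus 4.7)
The plan is to invoke Lemma \ref{lem:ultra-comp} to reduce the entire statement to the case of a matricial algebra $R = M_n(\ell)$, where the identification $(M_n(\ell)^\times)^0 = SL_n(\ell)$ from the preceding lemma combined with the determinant settles everything.

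First I would verify that $\pi_0(-)^\times \colon \Alg \to \cat{Grp}$ is an additive functor preserving finite products and filtering colimits of unital algebras. This is formal from the corresponding properties of $(-)^\times$ and of the polynomial extension $(-)[t]$, together with the commutation of kernels and images with filtering colimits in groups; the same is true of $(-)^\times_{\ab}$ and of $K_1$.

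Next, I would set up the two natural transformations on ultramatricial algebras. The comparison $\pi_0 R^\times \to K_1(R)$ is well-defined as soon as $K_1$ is homotopy invariant on $R$, since then $R^\times \to K_1(R)$ vanishes on $(R^\times)^0$. Homotopy invariance of $K_1$ on ultramatricial algebras follows by applying Lemma \ref{lem:ultra-comp} to the natural map from $K_1(A)$ to $K_1(A[t])$, noting that on $M_n(\ell)$ both sides agree with $\ell^\times$ because $\ell[t]$ is a Euclidean domain with constant units. The comparison $R^\times_{\ab} \to \pi_0 R^\times$ in turn exists for ultramatricial $R$ because $\pi_0 R^\times$ is then abelian: the preceding lemma identifies $\pi_0 M_n(\ell)^\times$ with $GL_n(\ell)/SL_n(\ell) \cong \ell^\times$ via the determinant, and filtering colimits of abelian groups are abelian.

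Finally, both natural transformations are isomorphisms in the matricial case. For $\ell \neq \F_2$, the argument in the proof of Proposition \ref{prop:k1-ultra} identifies $M_n(\ell)^\times_{\ab}$, $\pi_0 M_n(\ell)^\times$ and $K_1(M_n(\ell))$ all with $\ell^\times$ via the determinant, and each comparison map is the corresponding identification. For $\ell = \F_2$ one has $GL_n(\F_2) = SL_n(\F_2)$, hence $\pi_0 M_n(\F_2)^\times = 1$ and $K_1(M_n(\F_2)) = \F_2^\times = 1$. Lemma \ref{lem:ultra-comp} then transports these conclusions to all unital ultramatricial algebras. The main obstacle, if it deserves the name, is the verification of homotopy invariance of $K_1$ in the ultramatricial setting, which is required to give meaning to the map $\pi_0 R^\times \to K_1(R)$; once it is in hand, the rest of the proof is bookkeeping with determinants, abelianizations and filtering colimits.
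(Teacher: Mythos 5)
Your proposal is correct and follows essentially the same route as the paper: reduce to the matricial case via Lemma \ref{lem:ultra-comp}, use the identification $(M_n(\ell)^\times)^{0} = SL_n(\ell)$ from the preceding lemma, and conclude with the determinant (together with $K_1(M_n(\F_2)) = \F_2^\times = 1$ in the two-element case). The additional care you take in justifying that the comparison maps are well defined --- via homotopy invariance of $K_1$ on ultramatricial algebras and the abelianness of $\pi_0 R^\times$ --- fills in details the paper leaves implicit but does not change the argument.
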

\begin{proof} Since 
$(M_n(\ell)^\times)^{0} = SL_n(\ell)$ for each $n \in \N$, we have
\[
\pi_0(M_n(\ell)^\times) = GL_n(\ell)/SL_n(\ell) = \begin{cases}
GL_n(\ell)_{ab} & |\ell| > 2\\
1 & |\ell| = 2
\end{cases}
\]
This together with Lemma \ref{lem:ultra-comp} prove the first
part of the lemma and also that $\pi_0 R^\times = 1$
for each unital ultramatricial algebra $R$ over $\F_2$. It remains 
to see that $K_1(R)$ is also trivial when $\ell = \F_2$, which
follows from matricial stability since
$K_1(M_n(\F_2)) \cong K_1(\F_2) = \F_2^\times = 1$.
\end{proof}

\begin{defn}\label{def:R-phi}
Let $E$ be a finite graph and $\phi \colon L(E) \to R$
a graded unital homomorphism. Write
\[
R_{\phi} := \bigoplus_{e \in E^1} \phi(ee^\ast) R_0 \phi(ee^\ast).
\]
\end{defn}

\begin{lem} \label{lem:ultra-cor}
Assume that $\ell$ is a field. If $R$ is a unital ultramatricial algebra 
and $e \in R$ an idempotent, then $eRe$ is a unital ultramatricial algebra. 
\end{lem}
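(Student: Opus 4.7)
The plan is to unpack the definition of ultramatricial and pass corners to each step of the chain.

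First, I would fix a presentation $R = \bigcup_{n \ge 1} R_n$ as a countable increasing union of matricial $\ell$-subalgebras with unital inclusions. Since $e \in R$ and the $R_n$ exhaust $R$, there exists $N$ with $e \in R_N$; consequently $e \in R_n$ for all $n \ge N$, and we may consider the corner subalgebras $eR_ne \subset eRe$. Since unions commute with taking corners (any element of $eRe$ is of the form $exe$ with $x \in R_n$ for some $n$), I get $eRe = \bigcup_{n \ge N} eR_ne$, which is already a countable increasing union of $\ell$-subalgebras with common unit $e$. The remaining point is to prove that each $eR_ne$ is matricial.

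Fix $n \ge N$ and write $R_n = \bigoplus_{i=1}^{k} M_{m_i}(\ell)$. Decompose $e = e_1 + \cdots + e_k$ according to this block decomposition, each $e_i \in M_{m_i}(\ell)$ being an idempotent. Then
\[
eR_ne = \bigoplus_{i=1}^{k} e_i M_{m_i}(\ell) e_i,
\]
so it suffices to show that $e_i M_{m_i}(\ell) e_i$ is a matrix algebra over $\ell$. Here I use that $\ell$ is a field: any idempotent in $M_{m_i}(\ell)$ is conjugate via an invertible matrix to a diagonal projection $\mathrm{diag}(I_{r_i}, 0)$, where $r_i$ is its rank, so $e_i M_{m_i}(\ell) e_i \cong M_{r_i}(\ell)$ (with the convention $M_0(\ell) = 0$). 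Therefore $eR_ne$ is matricial.

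Combining the two previous steps, $eRe = \bigcup_{n\ge N} eR_ne$ is a countable increasing union of matricial $\ell$-algebras with unit $e$, hence unital ultramatricial as claimed. There is no real obstacle: the only non-formal ingredient is the conjugacy of idempotents to diagonal projections in $M_m(\ell)$, which is standard linear algebra over a field.
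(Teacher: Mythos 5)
Your proof is correct and follows essentially the same route as the paper's: write $eRe$ as an increasing union of corners $eR_ne$, decompose blockwise over the matricial factors, and use that over a field every idempotent matrix is conjugate to a diagonal rank projection, so each corner of a matrix algebra is again a matrix algebra. The only cosmetic difference is that you start the union at the first $N$ with $e \in R_N$ while the paper reindexes so that $e \in R_1$; both are fine.
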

\begin{proof} Let $R =\bigcup_{n \ge 1} R_n$
be an increasing union of unital, matricial subalgebras. 
Since the union is increasing, we may assume
without loss of generality that $1_R, e \in R_1$;
consequently $eRe = \bigcup_{n \ge 1} eR_ne$. 

It suffices to show that each algebra $eR_n e$ is matricial, that is, 
to show that the corner of any idempotent in a matricial algebra is again matricial. Let $A= M_{k_1}(\ell) \times \cdots \times M_{k_N}(\ell)$ be a matricial algebra and $(e_1, \ldots, e_N) \in \Idem(A)$. Since 
$eAe = \prod_{i=1}^N e_i A_i e_i$, we may prove that
any corner of a matrix algebra is again a matrix algebra.
In other words, we may assume that $N = 1$.
Put $k = k_1$ and $e = e_1$. Since $\ell$ is a field and an idempotent matrix represents a linear projector on $\ell^k$, there is an invertible matrix $u$ such that $u^{-1} e u = \sum_{s=1}^j \varepsilon_{s,s} =: p_j$ for some 
$j \in \{0, \ldots, k\}$;  
Conjugation by $u$ maps $e$ to $p_j$ and $e M_k(\ell) e$ to
$p_j M_k(\ell) p_j$, which is isomorphic to $M_j(\ell)$.
\end{proof}

\begin{prop} Let $E$ be a primitive graph and $R$ a strongly graded algebra
such that $R_0$ is ultramatricial. 
Each graded unital algebra homomorphism $\phi \colon L(E) \to R$ 
induces an isomorphism 
\begin{equation}\label{def:map-idem-k1}
(R_\phi)^\times_{\ab} = \prod_{e \in E^1} (\phi(ee^\ast) R_0 \phi(ee^\ast))^\times_{\ab}  \to K_1(R_0)^{E^1} \cong K_1^{\gr}(R)^{E^1}, \
(z_e)_{e \in E^1} \mapsto ([1-\phi(ee^\ast) + z_e)])_{e \in E^1}.
\end{equation}
\end{prop}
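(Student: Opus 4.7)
The plan is to prove the isomorphism edge by edge, factoring the desired map as a composition of two known isomorphisms.

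The idempotents $\{ee^\ast : e \in E^1\}$ are pairwise orthogonal in $L(E)_0$, since $ee^\ast \cdot ff^\ast = \delta_{e,f} ee^\ast$ by \eqref{ck1}. Consequently their images $p_e := \phi(ee^\ast)$ are pairwise orthogonal idempotents in $R_0$, and the definition of $R_\phi$ as the direct sum $\bigoplus_{e \in E^1} p_e R_0 p_e$ gives $(R_\phi)^\times = \prod_{e \in E^1}(p_e R_0 p_e)^\times$. Taking abelianizations preserves this product, so it suffices to prove that for each fixed edge $e$ the assignment
\[
\gamma_e \colon (p_e R_0 p_e)^\times_{\ab} \lra K_1(R_0), \qquad z \longmapsto [1-p_e+z]
\]
is an isomorphism; the identification $K_1(R_0) \cong K_1^{\gr}(R)$ is then Dade's theorem \eqref{map:dade}, applicable because $R$ is strongly graded.

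For the first half of the factorization, I would verify that $p_e R_0 p_e$ is unital ultramatricial in order to invoke Proposition \ref{prop:k1-ultra}. First, primitivity of $E$ together with Proposition \ref{prop:idem-primi} says that $ee^\ast$ is full in $L(E)_0$; writing $1 = \sum_i y_i\, ee^\ast x_i$ with $y_i, x_i \in L(E)_0$ and applying the unital map $\phi$ shows that $p_e$ is a full idempotent of $R_0$. In particular $p_e R_0 p_e$ is unital, and by Lemma \ref{lem:ultra-cor} it is ultramatricial. Proposition \ref{prop:k1-ultra} then supplies an isomorphism $(p_e R_0 p_e)^\times_{\ab} \cong K_1(p_e R_0 p_e)$.

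The second half is the Morita invariance isomorphism $K_1(p_e R_0 p_e) \cong K_1(R_0)$ induced by the corner inclusion of the full idempotent $p_e$; this is classical and also follows from Proposition \ref{prop:gr-fullcor} applied with trivial grading (and $KH_1 = K_1$ over a field). The critical compatibility is that this isomorphism sends the class of a unit $z \in (p_e R_0 p_e)^\times$ to $[1-p_e+z] \in K_1(R_0)$: this is because $1-p_e+z$ is honestly invertible in $R_0$ with inverse $1-p_e+z^{-1}$, and the assignment $z \mapsto 1-p_e+z$ is a group homomorphism $(p_e R_0 p_e)^\times \to R_0^\times$ that realizes the Morita map on the level of units. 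Composing the two isomorphisms yields $\gamma_e$, completing the proof.

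The main obstacle I anticipate is precisely this last compatibility: one has to verify carefully that the Morita-induced isomorphism on $K_1$ is implemented by the formula $[z] \mapsto [1-p_e+z]$, since $z$ itself is not a unit of $R_0$. A small caveat is that Proposition \ref{prop:k1-ultra} requires $\ell \neq \mathbb F_2$; for $\ell = \mathbb F_2$ one has $K_1(R_0) = 0$ by Proposition \ref{prop:ultra-pi0}, so the target of $\gamma_e$ vanishes and the remaining content is that the source vanishes as well — this case should be treated separately or by an implicit hypothesis.
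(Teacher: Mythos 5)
Your proof is correct and follows essentially the same route as the paper's: fullness of $\phi(ee^\ast)$ in $R_0$ via Proposition \ref{prop:idem-primi}, ultramatriciality of the corner via Lemma \ref{lem:ultra-cor}, Proposition \ref{prop:k1-ultra} to identify $K_1$ of the corner with its abelianized unit group, and the corner inclusion of a full idempotent to pass to $K_1(R_0)$. Your explicit verification that the corner-inclusion isomorphism is implemented by $z \mapsto [1-p_e+z]$, and your caveat about $\ell = \F_2$ (where Proposition \ref{prop:k1-ultra} does not apply), are details the paper's proof leaves implicit.
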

\begin{proof} 
It follows from Lemma \ref{lem:ultra-cor} and Proposition \ref{prop:k1-ultra}
that $K_1(\phi(ee^\ast) R_0 \phi(ee^\ast))$
can be computed as $(\phi(ee^\ast) R_0 \phi(ee^\ast))^\times_{\ab}$ 
for all $e \in E^1$.
By Proposition \ref{prop:idem-primi},
we know that each element $ee^\ast$ is a 
full idempotent of $L(E)_0$ and so, 
since $\phi$ is a graded unital homomorphism, it follows
that $\phi(ee^\ast)$ is a full idempotent of $R_0$. 
Consequently, each corner 
inclusion $\phi(ee^\ast) R_0 
\phi(ee^\ast) \to R_0$ induces an isomorphism at the level of 
$K_1$ groups. This concludes the proof.
\end{proof}

\subsection{The shift action for corner skew Laurent polynomials}

Let $R$ be a \emph{corner skew Laurent polynomial ring}, that is, a $\Z$-graded 
ring together with elements $t_+ \in R_1$, $t_- \in R_{-1}$ satisfying $t_- t_+ = 1$  (\cite{skew}*{Lemma 2.4}). Our motivating example is that of the Leavitt 
path algebra of an essential graph \cite{towards}*{p. 210}; indeed, if one selects one edge $e_v$ with range $v$ for each $v \in E^0$ then the elements
\[
t_+ = \sum_{v \in E^0} e_v, \qquad  t_l = \sum_{v \in E^0} e_v^\ast.
\]
yield a corner skew Laurent polynomial ring structure on $L(E)$.

Hazrat proves in \cite{hazbook}*{Proposition 1.6.6} that $p:=t_+t_-$ is a full idempotent if and only if $R$ is strongly graded. When this is the case, we know that $K_*^{\gr}(R)$
is naturally isomorphic to $K_*(R_0)$; we wish to understand to what kind of action the shift action translates 
to when viewed on $K_\ast(R_0)$.

In the case of the Leavitt path algebra of an essential graph $E$, Ara and Pardo prove in \cite{towards}*{Lemma 3.6} that the action on $L(E)_0$ is the one induced
by the (non-unital) corner homomorphism
\[
\alpha \colon L(E)_0 \to L(E)_0, \qquad x \mapsto t_+ x t_-.
\]
The proof relies on the fact that for $L(E)_0$, 
the Grothendieck group is generated by $1 \times 1$-idempotents. In other words, instead
of considering idempotents for all finite matrices, it suffices 
to do so for the ones of size $1$.

\begin{rmk}
We remark that in loc. cit. 
the shift agrees with the inverse of the map induced by $\alpha$.
This difference is explained by the fact that, if one builds $K_0^{\gr}$
for right modules instead of left modules, the isomorphism maps the shift 
action of $\sigma$ to that of $\sigma^{-1}$.
\end{rmk}

We will extend this result to $K_1$, for which we will use that
$K_1(L(E)_0)$ is generated by units. Since this is true
for any ultramatricial algebra, as noted in Proposition \ref{prop:k1-ultra}, 
we can in fact extend this to a statement on any strongly graded
corner skew Laurent polynomial
ring whose degree zero subring is ultramatricial. Namely, 
we prove the following:

\begin{thm} \label{thm:cslp-action}
Let $(R, t_+, t_-)$ be a strongly graded, corner skew Laurent polynomial ring. Consider $\alpha \colon R_0 \to R_0$ the homomorphism given by 
$x \mapsto t_+ x t_-$ and put $p =: \alpha(1)$. For any $x \in R^\times$, 
write $(R_0,x)$ for the class in $K_1$ given by the (left) module $R_0$
together with right multiplication by $x$. We have the following is an equality on $K_1(R_0)$:
\[
[(R_1, x)] = [(R_0, 1+p-\alpha(x)].
\]
\end{thm}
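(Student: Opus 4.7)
The plan is to identify $R_1$ explicitly as a summand of $R_0$ with a transported automorphism, so that $[(R_1, x)] \in K_1(R_0)$ can be rewritten as the class of a degree-zero unit of $R_0$. This avoids any triangulated machinery and relies only on Dade-style module bookkeeping driven by the corner skew identity $t_- t_+ = 1$.

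First I would use $t_- t_+ = 1$ and the definition $p := t_+ t_-$ to check that the maps
\[
\phi \colon R_1 \to R_0 p, \ y \mapsto y t_-, \qquad \psi \colon R_0 p \to R_1, \ z \mapsto z t_+
\]
are mutually inverse isomorphisms of left $R_0$-modules. The relation $\psi\phi(y) = y t_- t_+ = y$ is immediate, and $\phi\psi(z) = z t_+ t_- = zp = z$ for $z \in R_0 p$; left $R_0$-linearity is clear since both maps are given by right multiplication by elements of $R$. Next, I would transport the automorphism ``right multiplication by $x$'' from $R_1$ to $R_0 p$: a direct computation gives $\phi(\psi(z)\cdot x) = z t_+ x t_- = z \alpha(x)$, so the transported automorphism is right multiplication by $\alpha(x)$. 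Because $\alpha(a)\alpha(b) = t_+ a t_- t_+ b t_- = t_+ ab t_- = \alpha(ab)$, the element $\alpha(x)$ is a unit of $pR_0 p$ with inverse $\alpha(x^{-1})$, so this automorphism is genuinely invertible. Hence in $K_1(R_0)$,
\[
[(R_1, x)] = [(R_0 p, \alpha(x))].
\]

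Finally I would extend from the corner $R_0 p$ to all of $R_0$ via the orthogonal decomposition $R_0 = R_0 p \oplus R_0(1-p)$. Adjoining the identity automorphism of $R_0(1-p)$ is harmless since identity automorphisms vanish in $K_1$, and direct-sum-compatibility of $K_1$ with this splitting shows
\[
[(R_0 p, \alpha(x))] = [(R_0 p \oplus R_0(1-p),\ \alpha(x) \oplus \mathrm{id})] = [(R_0,\ (1-p) + \alpha(x))],
\]
where $(1-p) + \alpha(x)$ is the standard corner unit extending $\alpha(x)$ from $pR_0 p$ to $R_0$ (with inverse $(1-p) + \alpha(x^{-1})$, as one verifies using $p \alpha(y) = \alpha(y) p = \alpha(y)$). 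Composing the three identities produces the asserted formula. The only real obstacle is making sure the isomorphism $\phi$ is genuinely $R_0$-linear and that the transported operation remains an automorphism of $R_0 p$; both follow formally from $t_- t_+ = 1$ and from $\alpha$ being a (non-unital) corner homomorphism $R_0 \xrightarrow{\sim} p R_0 p$.
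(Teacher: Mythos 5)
Your argument is correct and takes essentially the same route as the paper: both proofs first identify $(R_1,x)\cong(R_0p,\alpha(x))$ via right multiplication by $t_-$ and $t_+$, and then pass from the corner to all of $R_0$ using the splitting $R_0=R_0p\oplus R_0(1-p)$ and additivity of $K_1$ (the paper phrases this last step as a morphism of split exact sequences $R_0(p-1)\hookrightarrow R_0\twoheadrightarrow R_0p$, which is the same computation). Note that you correctly land on the unit $1-p+\alpha(x)$, which is also what the paper's own proof and its later applications establish; the expression $1+p-\alpha(x)$ appearing in the theorem statement is a sign typo.
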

\begin{proof} Observe that $R_1 = R_0 t_+$ and that right multiplication
by $t_-$ yields an isomorphism $R_1 \iso R_0 p$ whose inverse
in right multiplication by $t_+$. It follows that 
$[(R_1, x)] = [(R_0p, \alpha(x))]$. Now, the following diagram 
with exact rows
\[
\begin{tikzcd}[column sep = huge, row sep = large]
    R_0 (p-1) \arrow[hook]{r} \arrow[equals]{d}
    & R_0 \arrow{r}{- \cdot p} \arrow{d}{- \cdot (1-p+\alpha(x))} & 
    R_0 p \arrow{d}[right]{\alpha(x)}\\
    R_0 (p-1) \arrow[hook]{r} & R_0 \arrow{r}{- \cdot p} & R_0 p\\
\end{tikzcd}
\]
says that
\[
[(R_0, 1+p-\alpha(x))] = [(R_0(p-1), 1)] + [(R_0p, \alpha(x))]
 = [(R_0 p, \alpha(x))].
\]
\end{proof}

We remark that the following corollary applies to Leavitt path
algebras of essential graphs, which is our main interest for this result.

\begin{coro} \label{coro:d-shift-d=alpha}
Let $(R, t_+, t_-)$ be a strongly graded, corner skew Laurent polynomial ring. Assume that $\ell$ is a field and $R_0$ is a unital ultramatricial algebra.
Writing $\alpha \colon R_0 \to R_0$ for the homomorphism given by 
$x \mapsto t_+ x t_-$ and $\mathsf{Dade} \colon K_1(R_0) \to K_1^{\gr}(R)$ for the isomorphism of \eqref{map:dade}, 
the following diagram is commutative:
\[
\begin{tikzcd}
    K_1^{\gr}(R) \arrow{r}{\sigma} & K_1^{\gr}(R)\\
    K_1(R_0) \arrow{u}{\mathsf{Dade}} \arrow{r}{K_1(\alpha)} & \arrow{u}[right]{\mathsf{Dade}} K_1(R_0)
\end{tikzcd}
\]
\end{coro}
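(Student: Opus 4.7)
The plan is to reduce the claim to checking commutativity on classes of units of $R_0$, where Theorem \ref{thm:cslp-action} computes the relevant shift explicitly. First I would dispense with the case $\ell = \F_2$: by Proposition \ref{prop:ultra-pi0}, $K_1(R_0) = 1$, so both composites in the diagram vanish. Assume henceforth that $\ell \neq \F_2$; by Proposition \ref{prop:k1-ultra}, $K_1(R_0) \cong (R_0^\times)_{\ab}$, so it suffices to verify that $\mathsf{Dade} \circ K_1(\alpha)$ and $\sigma \circ \mathsf{Dade}$ agree on $[x]$ for each unit $x \in R_0^\times$.

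The next step is to translate the shift action on $K_1^{\gr}(R)$ into a computation in $K_1(R_0)$. Under Dade's equivalence $R \otimes_{R_0} - \colon \cat{Proj}_{\cat{fg}}(R_0) \iso \cat{Gr-Proj}_{\cat{fg}}(R)$, the class $[(R_0, x)]$ corresponds to $R$ equipped with the automorphism given by right multiplication by $x$. Its shift $R[1]$ has degree-zero component $R_1$, which is preserved by right multiplication by $x \in R_0$. Applying the inverse of Dade's equivalence then gives
\[
\mathsf{Dade}^{-1}\bigl(\sigma \cdot \mathsf{Dade}[x]\bigr) = [(R_1, x)] \in K_1(R_0).
\]

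I would then invoke Theorem \ref{thm:cslp-action} to identify $[(R_1, x)]$ with $[(R_0,\, 1 - p + \alpha(x))]$ in $K_1(R_0)$, where $p = t_+ t_- = \alpha(1)$. The element $1 - p + \alpha(x)$ is a genuine unit of $R_0$ with inverse $1 - p + \alpha(x^{-1})$, since $\alpha$ restricts to a unital algebra homomorphism into the corner $pR_0p$ and $\alpha(x)\alpha(x^{-1}) = p$. Finally, because $\alpha \colon R_0 \to R_0$ is a (non-unital) algebra homomorphism with $\alpha(1) = p$, the standard unitalization formula for $K_1$ identifies $K_1(\alpha)[x]$ with $[1 - p + \alpha(x)]$, completing the verification on the generating set.

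The hardest part is the correct interpretation of the shift on $K_1^{\gr}(R)$ under Dade: one must recognize that it sends $[(R_0, x)]$ to $[(R_1, x)]$, after which Theorem \ref{thm:cslp-action} does essentially all of the remaining work. The ultramatricial hypothesis on $R_0$ plays the role of ensuring that $K_1(R_0)$ is generated by classes of units, which is what makes this pointwise check sufficient.
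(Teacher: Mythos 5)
Your proposal is correct and follows essentially the same route as the paper: reduce to classes of units via Proposition \ref{prop:k1-ultra}, trace $[(R_0,x)]$ through Dade's equivalence and the shift to land on $[(R_1,x)]$, and then apply Theorem \ref{thm:cslp-action} to identify this with $[(R_0, 1-p+\alpha(x))] = K_1(\alpha)[x]$. Your explicit treatment of the $\F_2$ case and of the non-unital homomorphism formula for $K_1(\alpha)$ are small points of added care that the paper leaves implicit, but the argument is the same.
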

\begin{proof} By Proposition \ref{prop:k1-ultra} applied to $R_0$,
an element in $K_1(R_0)$ can be represented 
by the class of the free module $R_0$
together with the automorphism $\rho_u \colon R_0 \to R_0$
of right multiplying by a unit $u$; we denote this by $(R_0, u)$.

The map $\mathsf{Dade}$ is induced by tensoring by $R \otimes_{R_0} -$.
Applying it to $(R_0, u)$ gives $(R \otimes_{R_0} R_0, u)$,
which is equal in $K_1^{\gr}(R)$ to the class of $(R, u)$, via the canonical
(graded) isomorphism $R \otimes_{R_0} R_0 \cong R$. The shift functor
maps $(R,u)$ to $(R[+1], u)$. Finally, the inverse
of the map $\mathsf{Dade}$ takes the class of the latter to its 0-th component, resulting
in $[(R_1, u)]$. This is exactly the action induced on units by $\alpha$,
as shown in Theorem \ref{thm:cslp-action}.
\end{proof}

\begin{rmk} \label{rmk:cslp-maps}
Let $(R,t_+,t_-)$ be a corner skew Laurent
polynomial ring, and let 
$f \colon R \to S$ be a unital graded algebra homomorphism. 
If we put $s_- = f(t_-)$, $s_+ = f(t_-)$, then $(S,s_+,s_-)$
is a corner skew Laurent polynomial ring. Further, if $R$
is strongly graded then so is $S$ (\cite{hazbook}*{Proposition 1.1.15 (4)}).
\end{rmk}

\subsection{Surjectivity of 
the map \topdf{\eqref{def:ucanmap}}{j}}

We are now in position to prove that the map \eqref{def:ucanmap} is surjective. 
To do this, we first define a certain modification
of a graded algebra map $L(E) \to R$ by an element of $K_1(R_0)$, cf. \cite{kklpa2}*{Equation 5.10} and \cite{kkh}*{Equation 13.6}.

\begin{defn} Let $E$ be a primitive 
graph. Given $\phi \colon L(E) \to R$ a graded algebra homomorphism, we define the following group epimorphism
\begin{align}
U \colon &(R_\phi)^\times_{\ab}
\xto{\eqref{def:map-idem-k1}} K_1(S_0)^{E^1} \xto{s_\ast} K_1(S_0)^{E^0}
\to \gBF^\vee(E) \otimes_{\Z[\sigma]} KH_1^{\gr}(S)  \label{def:U} \\
&z \longmapsto \prod_{e \in E^1} s(e) \otimes (1-\phi(ee^\ast)+z_e). \notag
\end{align}
Given $z = (z_e)_{e \in E^1} \in (R_\phi)^\times$, we associate to it
a graded unital map $\phi_z \colon L(E) \to R$ defined by
\begin{equation}\label{eq:phiz}
\phi_z \colon L(E) \to L(F), \quad \phi(e) = z_e\phi(e), \quad \phi_z(e^\ast) = \phi(e^\ast)z_e^{-1}, \quad \phi_u(v) = \phi(v) 
\ (v \in E^0, e\in E^1).
\end{equation}
\end{defn}

\begin{lem} \label{lem:jf=dx+xi}
If $\xi \in \kkgr(L(E), L(F))_1$, then 
there exist
a graded unital algebra map $\phi \colon L(E) \to L(F)$ and 
$x \in \gBF^\vee(E) \otimes_{\Z[\sigma]} KH_1^{\gr}(L(F))$
such that 
\[
\overline j(\phi) + d(x) = \xi.
\]
\end{lem}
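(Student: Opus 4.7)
The plan is to use the Universal Coefficient Theorem (Theorem \ref{thm:uct}) together with the fact, due to \cite{lift} and \cite{vas}, that every pointed preordered $\Z[\sigma]$-module homomorphism between graded Grothendieck groups of Leavitt path algebras is realised by a graded unital algebra homomorphism.

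First, I would apply $\ev$ to $\xi$ to obtain the $\Z[\sigma]$-module morphism $\ev(\xi) \colon \gBF(E) \to KH_0^{\gr}(L(F))$. By hypothesis $\xi \in \kkgr(L(E),L(F))_1$, so $\ev(\xi)$ is compatible with the pointed preordered structures. Since $\ell$ is a field we have $K_0(\ell) \cong \Z$, so the canonical map $\can \colon \gBF(E) \to K_0^{\gr}(L(E))$ is an isomorphism of pointed preordered $\Z[\sigma]$-modules; moreover $K_0^{\gr} = KH_0^{\gr}$ on Leavitt path algebras in this setting (Remark \ref{rmk:htpy-khgr}). Composing $\ev(\xi)$ with $\can^{-1}$ therefore yields a pointed preordered module morphism
\[
\psi \colon K_0^{\gr}(L(E)) \to K_0^{\gr}(L(F)).
\]

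Next, I would invoke the lifting results in \cites{lift, vas}: there exists a graded unital algebra homomorphism $\phi \colon L(E) \to L(F)$ such that $K_0^{\gr}(\phi) = \psi$. By part (ii) of Theorem \ref{thm:uct}, the composition $\ev \circ \overline{j}$ sends the class of $\phi$ to $\can^\ast \circ K_0^{\gr}(\phi) = \can^\ast \circ \psi$, which, in view of the definition of $\psi$, equals $\ev(\xi)$. Thus $\xi - \overline{j}(\phi)$ lies in $\ker(\ev)$.

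Finally, by exactness of the top row of the diagram in Theorem \ref{thm:uct}, $\ker(\ev) = \im(d)$, so there exists $x \in \gBF^\vee(E) \otimes_{\Z[\sigma]} KH_1^{\gr}(L(F))$ with $d(x) = \xi - \overline{j}(\phi)$. Rearranging gives $\overline{j}(\phi) + d(x) = \xi$, as required. The main obstacle is conceptual rather than technical here: everything hinges on the graded lifting theorem for pointed preordered module maps between Leavitt path algebras, which is cited as a black box.
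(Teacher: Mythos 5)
Your proposal is correct and follows essentially the same route as the paper's proof: lift $\ev(\xi)$ to a graded unital homomorphism $\phi$ via \cite{lift}*{Theorem 6.1} (see also \cite{vas}*{Theorem 3.2}), observe that $\ev(\overline{j}(\phi)) = \ev(\xi)$, and conclude by exactness of the UCT sequence since $\ker(\ev) = \im(d)$. The only difference is that you make explicit the identification $\can \colon \gBF(E) \iso K_0^{\gr}(L(E))$, which the paper leaves implicit.
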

\begin{proof} By \cite{lift}*{Theorem 6.1} (see also \cite{vas}*{Theorem 3.2}),
there exists a unital graded algebra map $\phi \colon L(E) \to L(F)$
such that $K_0^{\gr}(\phi) = \ev(\xi)$. Since $K_0^{\gr}(\phi) = \ev\overline{j}(\phi)$, this says that $\overline j(\phi)-\xi \in \ker(\ev) = \im d$, from which the lemma now follows. 
\end{proof}

\begin{lem} \label{lem:jf+u=jfu}
Let $E$ be a primitive graph and
let $R$ be a graded algebra such that $R_0$ is a unital ultramatricial
algebra. If $\phi \colon L(E) \to R$ is a unital graded homomorphism, then for each $z = (z_e)_{e \in E^1} \in R_\phi^\times$
we have 
\[
d(U([z])) + j(\phi) = j(\phi_z).
\]
\end{lem}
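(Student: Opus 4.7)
The plan is to transport the asserted identity through Poincaré duality and verify the resulting equality at the level of explicit units. By Corollary~\ref{coro:uf} (or equivalently by combining Theorems~\ref{thm:poinc} and \ref{thm:rep-grunits}, which apply because $E$ is primitive hence essential and $L(E_t)$ is strongly graded), the isomorphism $\kkgr(L(E),R) \cong KH_1((R \otimes L(E_t))_0)$ sends $j(\phi)$ to the class of $u_\phi$ and $j(\phi_z)$ to the class of
\[
u_{\phi_z} = 1 \otimes 1 - \sum_{v \in E^0} \phi(v) \otimes v + \sum_{e \in E^1} z_e\phi(e) \otimes e_t^\ast.
\]
On the other hand, applying Theorem~\ref{thm:uct}(iv) summand-by-summand to the definition of $U$, the class $d(U([z]))$ corresponds to $\sum_{e \in E^1} \bigl[\,1 \otimes 1 - 1 \otimes s(e) + (1 - \phi(ee^\ast) + z_e) \otimes s(e)\,\bigr]$. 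So it is enough to show that $u_{\phi_z} u_\phi^{-1}$ coincides with the product of these factors over $e \in E^1$.

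First I would check by direct expansion that $u_\phi$ is invertible with explicit inverse
\[
v_\phi := 1 \otimes 1 - \sum_{v \in E^0} \phi(v) \otimes v + \sum_{e \in E^1} \phi(e^\ast) \otimes e_t;
\]
the cancellations on either side use (V), (E1), (E2), (CK1), and most crucially (CK2), which is available both on $E$ and on $E_t$ because $E$ is regular and essential. Running the same expansion for $u_{\phi_z} v_\phi$, and using the identity $z_e = z_e \phi(ee^\ast) = \phi(ee^\ast) z_e$ coming from the definition of $R_\phi$, the only surviving contribution beyond those that cancel as in the computation of $u_\phi v_\phi = 1$ is $\bigl(\sum_e z_e \phi(e) \otimes e_t^\ast\bigr)\bigl(\sum_f \phi(f^\ast) \otimes f_t\bigr) = \sum_e z_e \otimes s(e)$. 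Combining this with the (CK2) identity $\sum_v \phi(v) \otimes v = \sum_e \phi(ee^\ast) \otimes s(e)$ yields
\[
u_{\phi_z} u_\phi^{-1} = 1 \otimes 1 + \sum_{e \in E^1} (z_e - \phi(ee^\ast)) \otimes s(e).
\]

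Finally I would verify that the product $\prod_{e \in E^1} \bigl(1 \otimes 1 + (z_e - \phi(ee^\ast)) \otimes s(e)\bigr)$ collapses to exactly the same sum. Writing $y_e := z_e - \phi(ee^\ast) \in \phi(ee^\ast) R_0 \phi(ee^\ast)$, any product $y_e y_f$ with $e \neq f$ lies in $\phi(ee^\ast) R_0 \phi(ee^\ast ff^\ast) R_0 \phi(ff^\ast)$, which vanishes since (CK1) gives $e^\ast f = 0$ and hence $\phi(ee^\ast ff^\ast) = 0$. Therefore every multi-edge term in the expansion of the product is zero, and the product reduces to $1 \otimes 1 + \sum_e y_e \otimes s(e)$, which is exactly $u_{\phi_z} u_\phi^{-1}$. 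Rewriting each factor as $1 \otimes 1 - 1 \otimes s(e) + (1 - \phi(ee^\ast) + z_e) \otimes s(e)$ then identifies the resulting $KH_1$-class with $d(U([z]))$, giving $[u_{\phi_z}] - [u_\phi] = d(U([z]))$ and hence $j(\phi_z) - j(\phi) = d(U([z]))$ in $\kkgr$. The main technical obstacle is the bookkeeping of the product expansions, especially the careful use of (CK1) to annihilate cross terms and of (CK2) on both $E$ and $E_t$ to effect the non-trivial simplifications.
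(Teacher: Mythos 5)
Your proposal is correct and follows essentially the same route as the paper: the paper's proof simply states that ``one checks $u_{\phi_z} = u_\phi \cdot U(z)$'' and then invokes Theorem \ref{thm:uct}(iv) together with Corollary \ref{coro:uf}, which is exactly the identity of units you verify in detail (including the explicit inverse $v_\phi$, the vanishing of cross terms via (CK1), and the use of (CK2) on both $E$ and $E_t$). Your computations check out, so you have merely filled in the verification the paper leaves to the reader.
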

\begin{proof} 
Employing the notation of \eqref{def:uf}, one checks that $u_{\phi_z} = u_\phi \cdot U(z)$. Part iv) of Theorem \ref{thm:uct} and Corollary \eqref{coro:uf} imply that $j(\phi_z) = j(\phi) + d(U([z]))$.
\end{proof}

\begin{coro} \label{coro:surjectivity}
The map \eqref{def:ucanmap} is surjective.
\end{coro}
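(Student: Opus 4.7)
The plan is to combine Lemmas \ref{lem:jf=dx+xi} and \ref{lem:jf+u=jfu} by means of the surjectivity of the modification map $U$ from \eqref{def:U}. Once we have the surjectivity of $U$ in hand, the argument is essentially a direct chase.

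Let $\xi \in \kkgr(L(E), L(F))_1$ be arbitrary. The first step is to apply Lemma \ref{lem:jf=dx+xi}, which provides a unital graded algebra homomorphism $\phi \colon L(E) \to L(F)$ together with an element $x \in \gBF^\vee(E) \otimes_{\Z[\sigma]} KH_1^{\gr}(L(F))$ such that
\[
\overline j(\phi) + d(x) = \xi.
\]
Since $F$ is primitive it is in particular essential and hence has no sinks, so by Remark \ref{rmk:lpa-str} the algebra $L(F)$ is strongly graded; consequently $L(F)_0$ is a unital ultramatricial algebra, and the hypotheses of Lemma \ref{lem:jf+u=jfu} are satisfied with $R = L(F)$.

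The second step is to lift $x$ along $U$. By construction, $U$ is the composition of the isomorphism \eqref{def:map-idem-k1}, the map $s_\ast$ induced by the source function $E^1 \to E^0$, and the surjection $K_1(L(F)_0)^{E^0} \twoheadrightarrow \gBF^\vee(E) \otimes_{\Z[\sigma]} KH_1^{\gr}(L(F))$ given by tensoring the Dade isomorphism with the defining quotient of $\gBF^\vee(E)$. Since $E$ is essential it has no sinks, so every vertex has at least one outgoing edge and therefore $s_\ast$ is surjective; thus $U$ is a group epimorphism as claimed in its definition. Pick $z \in (L(F)_\phi)^\times$ with $U([z]) = x$.

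The final step is to apply Lemma \ref{lem:jf+u=jfu} to $\phi$ and $z$, which yields
\[
j(\phi_z) = d(U([z])) + j(\phi) = d(x) + j(\phi) = \xi.
\]
Moreover $\phi_z$ is a unital graded algebra homomorphism $L(E) \to L(F)$ by \eqref{eq:phiz}, so $\xi = \overline j(\phi_z)$ lies in the image of \eqref{def:ucanmap}. As $\xi$ was arbitrary, this proves surjectivity. There is no real obstacle in this step: all the technical work has already been carried out in establishing the two lemmas and the $K_1$-computation underlying the surjectivity of $U$.
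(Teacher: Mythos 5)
Your proof is correct and follows essentially the same route as the paper: apply Lemma \ref{lem:jf=dx+xi} to write $\xi = \overline j(\phi) + d(x)$, lift $x$ through the epimorphism $U$ of \eqref{def:U}, and conclude via Lemma \ref{lem:jf+u=jfu} that $\xi = \overline j(\phi_z)$. The additional remarks verifying the hypotheses of Lemma \ref{lem:jf+u=jfu} and the surjectivity of $s_\ast$ are sound and merely make explicit what the paper leaves implicit.
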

\begin{proof} Let $\xi \in \kkgr(L(E), L(F))_1$. By Lemma \ref{lem:jf=dx+xi}
there exists $\phi \colon L(E)\to L(F)$ a graded unital algebra homomorphism 
and $x \in \gBF^\vee(E) \otimes_{\Z[\sigma]} KH_1^{\gr}(L(F))$
such that $\overline j(\phi) + d(x) = \xi$. Since \eqref{def:U}
is surjective, there exists $z \in L(F)_\phi$ such that $x = [U(z)]$,
and thus $\xi = \overline{j}(\phi_z)$ by Lemma~\ref{lem:jf+u=jfu}.
\end{proof}

\subsection{Injectivity of the map
\topdf{\eqref{def:ucanmap}}{j}}

Next we analyze the injectivity of $\overline j$. 
We now consider the set $[L(E), L(F)]_{1, M_2}$
of graded unital maps $L(E) \to L(F)$ up to graded $M_2$-homotopy.
As we shall presently see, restricted to this quotient of $[L(E), L(F)]_1$
the map $\overline j$ becomes injective. 

\begin{lem}\label{lem:v~u=>phiv~phiu}
With notation as in Lemma \ref{lem:jf+u=jfu},
if $v,u \in R_\phi^\times$
are such that $[v] = [u]$ in $(R_\phi)^\times_{\ab}$,
then $\phi_v \sim \phi_u$.
\end{lem}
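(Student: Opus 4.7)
The plan is to reduce to the case where $u$ is the unit of $R_\phi$ and then construct an explicit elementary homotopy using a polynomial path of units provided by Proposition~\ref{prop:ultra-pi0}.

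For the reduction, since $u_e \in \phi(ee^\ast)R_0\phi(ee^\ast)$, we have $u_e = \phi(ee^\ast)u_e = u_e\phi(ee^\ast)$, whence $\phi_u(ee^\ast) = u_e\phi(ee^\ast)u_e^{-1} = \phi(ee^\ast)$ for every $e \in E^1$. Consequently $R_{\phi_u} = R_\phi$, and a direct computation on generators gives $\phi_v = (\phi_u)_{vu^{-1}}$. The hypothesis $[v] = [u]$ in $(R_\phi)^\times_{\ab}$ translates to $[vu^{-1}] = 1$, so it suffices to prove the following: for any graded unital homomorphism $\psi \colon L(E) \to R$ and any $w \in R_\psi^\times$ with $[w] = 1$ in $(R_\psi)^\times_{\ab}$, one has $\psi_w \sim \psi$. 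Applying this with $\psi = \phi_u$ and $w = vu^{-1}$ then yields $\phi_v \sim \phi_u$.

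To prove the reduced statement, I would invoke Lemma~\ref{lem:ultra-cor} to see that each $\psi(ee^\ast)R_0\psi(ee^\ast)$ is a unital ultramatricial algebra, so Proposition~\ref{prop:ultra-pi0} identifies $(R_\psi)^\times_{\ab}$ with $\pi_0 R_\psi^\times$. The assumption $[w]=1$ thus furnishes, for every $e \in E^1$, a polynomial unit $h_e \in (\psi(ee^\ast)R_0[t]\psi(ee^\ast))^\times$ with $h_e(0) = \psi(ee^\ast)$ and $h_e(1) = w_e$. The candidate homotopy is then
$$
H \colon L(E) \to R[t], \qquad H(v) = \psi(v), \quad H(e) = h_e\psi(e), \quad H(e^\ast) = \psi(e^\ast)h_e^{-1}.
$$
It is graded of degree zero in $t$ and satisfies $\ev_0\circ H = \psi$, $\ev_1\circ H = \psi_w$ by construction.

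The main obstacle — although not a deep one — is verifying that $H$ descends from the free algebra on $\{v,e,e^\ast\}$ to a map out of $L(E)$, i.e.\ respects all the Leavitt path algebra relations. Relations~\eqref{V}, \eqref{E1}, \eqref{E2} follow routinely from the fact that $h_e$ lies in the corner $\psi(ee^\ast)R_0[t]\psi(ee^\ast)$, which is fixed on the appropriate side by $\psi(s(e))$ and $\psi(r(e))$. For \eqref{ck1}, $H(f^\ast)H(e) = \psi(f^\ast)h_f^{-1}h_e\psi(e)$: when $f \neq e$ the identity $f^\ast e = 0$ forces $\psi(ff^\ast)\psi(ee^\ast) = 0$ and hence $h_f^{-1}h_e = 0$, while the diagonal case reduces to $\psi(r(e))$ using $h_e^{-1}h_e = \psi(ee^\ast)$. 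For \eqref{ck2}, each summand $H(e)H(e^\ast) = h_e\psi(ee^\ast)h_e^{-1}$ collapses to $\psi(ee^\ast)$ because $\psi(ee^\ast)$ is the unit of the corner containing $h_e$, so $\sum_{e\in s^{-1}(v)}H(e)H(e^\ast) = \sum_e\psi(ee^\ast) = \psi(v)$ for $v$ regular.
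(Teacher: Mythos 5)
Your proof is correct and follows essentially the same route as the paper: both use Proposition \ref{prop:ultra-pi0} (together with Lemma \ref{lem:ultra-cor}) to produce a polynomial path of units in the corners $\phi(ee^\ast)R_0\phi(ee^\ast)$ and then deform $\phi$ along it via the construction \eqref{eq:phiz}. The paper simply takes the path directly from $u$ to $v$ and writes the homotopy as $(i\circ\phi)_Z$ with $\ev_j\circ(i\circ\phi)_Z=\phi_{Z(j)}$, so your normalization to $u=1$ and the explicit check of the Leavitt relations are additional but harmless detail.
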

\begin{proof} By Proposition \ref{prop:ultra-pi0}, there 
exist 
\[
(Z_e)_{e \in E^1} \in \prod_{e \in E^1} (\phi(ee^\ast) R_0 \phi(ee^\ast))[t])^\times = \prod_{e \in E^1} (\phi(ee^\ast) (S[t])_0 \phi(ee^\ast)))^\times
\]
such that $Z_e(0) = u_e$ and $Z_e(1) = v_e$.
If we compose $\phi$ with the inclusion $i \colon S \to S[t]$
and then consider $h := (i \circ \phi)_Z$, it follows that 
$ev_i \circ h = \phi_{Z(i)}$; this concludes the proof.
\end{proof}

\begin{lem}\label{lem:action-s(e)=r(f)}
Let $E$ be a primitive graph and $\phi \colon L(E) \to R$ a graded unital homomorphism. Assume that 
$R_0$ is ultramatricial. Given $e,f \in E^1$
such that $r(f) = s(e)$
and $u \in \phi(ee^\ast)R_0\phi(ee^\ast)^\times$, we have that
\[
\sigma \cdot [1-\phi(ee^\ast)+u] = 
[1-\phi(fe(fe)^\ast) + \phi(f)u\phi(f^\ast)]
\]
in $K_1(R_0)$. In particular, for any other $g \in E^1$ such that
$r(g) = s(e)$ we have
\[
[1-\phi(fe(fe)^\ast) + \phi(f)u\phi(f^\ast)]
=[1-\phi(ge(ge)^\ast) + \phi(g)u\phi(g^\ast)].
\]
\end{lem}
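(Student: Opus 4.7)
The plan is to apply Corollary \ref{coro:d-shift-d=alpha} to translate the shift on $K_1^{\gr}(R) \cong K_1(R_0)$ into the map induced by a suitable corner homomorphism, and then to perform a direct computation on the unit $1 - \phi(ee^\ast) + u$. The key flexibility is that the corner skew Laurent polynomial structure on $R$ depends on a choice of one edge $f_w$ per vertex $w$; exploiting this degree of freedom is what lets the prescribed edge $f$ appear on the right-hand side of the first identity.

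First I would fix such a choice $\{f_w : w \in E^0\}$ with $r(f_w) = w$, subject to the constraint $f_{s(e)} = f$; this is possible because $E$ is primitive, hence essential. Following the discussion preceding Corollary \ref{coro:d-shift-d=alpha}, this equips $L(E)$ with a corner skew Laurent polynomial structure via $t_+ := \sum_w f_w$, $t_- := \sum_w f_w^\ast$. By Remark \ref{rmk:cslp-maps}, the triple $(R, \phi(t_+), \phi(t_-))$ inherits the same structure and is strongly graded (as is $L(E)$, by Remark \ref{rmk:lpa-str}); together with the hypothesis that $R_0$ is ultramatricial, this puts Corollary \ref{coro:d-shift-d=alpha} in force.

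The corollary identifies $\sigma$ with $K_1(\alpha)$, where $\alpha \colon R_0 \to R_0$ is the non-unital homomorphism $\alpha(x) = \phi(t_+) x \phi(t_-)$ with $\alpha(1) =: p$. Since $\alpha$ factors through the corner $pR_0p$, on units it acts by $[w] \mapsto [1-p + \alpha(w)]$, via the standard description of $K_1$ of a non-unital map in terms of unitalizations. Applying this to $1 - \phi(ee^\ast) + u$ reduces the proof to computing $\alpha(\phi(ee^\ast))$ and $\alpha(u)$. The Leavitt path relations force that in each of these double sums only the diagonal $w = w' = s(e)$ contribution survives, because $\phi(f_w ee^\ast f_{w'}^\ast)$ vanishes unless $r(f_w) = s(e) = r(f_{w'})$. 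Combined with $f_{s(e)} = f$ and the fact that $u = \phi(ee^\ast) u \phi(ee^\ast)$, this gives $\alpha(\phi(ee^\ast)) = \phi(fe(fe)^\ast)$ and $\alpha(u) = \phi(f) u \phi(f^\ast)$, from which the first displayed equation follows once the $p$ contributions cancel.

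The second identity is immediate: replacing $f$ by any other edge $g$ with $r(g) = s(e)$ in the previous step gives an alternative expression for the same class $\sigma \cdot [1-\phi(ee^\ast) + u]$, so the two coincide. I do not expect a serious obstacle; the only subtleties worth double-checking are the formula for $K_1(\alpha)$ on units when $\alpha$ is non-unital, and the bookkeeping that ensures only the diagonal summand survives in the two $\alpha$-computations.
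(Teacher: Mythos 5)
Your proposal is correct and follows essentially the same route as the paper: choose one edge $f_w$ with $r(f_w)=w$ per vertex, normalized so that $f_{s(e)}=f$, push the resulting corner skew Laurent polynomial structure through $\phi$, invoke the description of $\sigma$ as $K_1(\alpha)$ with $\alpha(x)=\phi(t_+)x\phi(t_-)$, and observe that only the diagonal term $w=w'=s(e)$ survives because $f_w e=0$ unless $r(f_w)=s(e)$. The paper performs the same computation (writing the image of the unit as $1-\phi(t_+)(\phi(ee^\ast)-u)\phi(t_-)$) and likewise deduces the second identity by comparing two choices of the distinguished edge at $s(e)$.
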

\begin{proof} For each vertex $v \in E^0 \setminus \{s(e)\}$,
let $f_v$ be an edge with range $v$; their existence
is guaranteed by the essentiallity hypothesis on $E$.
Set $f_{s(e)} := f$. As pointed out in \cite{towards}*{p. 203}, 
the elements $t_+ = \sum_{v \in E^0} f_v$
and $t_- = t_+^\ast$ satisfy $t_- t_+ = 1$,
yielding a corner skew Laurent polynomial structure on $L(E)$. 
Further, this gives such a structure on $R$
via the elements $\phi(t_+)$ and $\phi(t_-)$.

As per Theorem \ref{thm:cslp-action}, 
the action of $\sigma$ on $K_1(R_0)$ can be described as
the one induced by the automorphism
\[
\alpha \colon R_0 \to R_0, \qquad x \mapsto \phi(t_+)x\phi(t_-).
\]
Hence
\[
\sigma \cdot [1-\phi(ee^\ast)+u] = [1-\phi(t_+t_-)+\phi(t_+)(1-\phi(ee^\ast)+u)\phi(t_-)]
= [1- \phi(t_+)(\phi(ee^\ast)-u)\phi(t_-)].
\]
Since $\phi(ee^\ast), u \in \phi(ee^\ast)R_0\phi(ee^\ast)$, it follows that
$f_v e = 0$ unless $v = s(e)$, that is, unless $f_v = f$.
Thus
\[
[1- \phi(t_+)(\phi(ee^\ast)-u)\phi(t_-)] =
[1-\phi(fe(fe)^\ast) + \phi(f)u\phi(f^\ast)],
\]
concluding the proof.
\end{proof}

\begin{lem} \label{lem:ad-lemma}
Let $E$ be a primitive
graph and $R$ a graded
algebra such that $R_0$ is an ultramatricial algebra. 
Let $\phi \colon L(E) \to R$ be a graded algebra map.
If $z \in \prod_{e \in E^1} (\phi(ee^\ast)R_0 \phi(ee^\ast))^\times$
is such that $d(U([z])) = 0$, then $\phi \sim_{\ad} \phi_z$.
\end{lem}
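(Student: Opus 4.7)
The strategy is to use the UCT to convert the hypothesis into a combinatorial statement about $K_1(R_0)^{E^0}$ modulo the Bowen--Franks relations, and then to exhibit an explicit ad-witness built from units of the form $1 - \phi(ee^\ast) + w$.

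By Theorem \ref{thm:uct}, the map $d$ is injective, so $d(U([z])) = 0$ is equivalent to $U([z]) = 0$ in $\gBF^\vee(E) \otimes_{\Z[\sigma]} K_1(R_0)$; here we use that $\ell$ is a field and $R_0$ is ultramatricial, so $KH_1^{\gr}(R) \cong K_1^{\gr}(R) \cong K_1(R_0)$ by Remark \ref{rmk:htpy-khgr} and Dade's theorem. Since $E$ is primitive, $\gBF^\vee(E) \otimes K_1(R_0)$ is presented as $K_1(R_0)^{E^0}$ modulo the subgroup generated by $v \otimes k - \sum_{f : r(f) = v} s(f) \otimes \sigma k$ for $v \in E^0$ and $k \in K_1(R_0)$. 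The vanishing of $U([z])$ then expresses $s_\ast([z]) \in K_1(R_0)^{E^0}$ as a finite combination of such relation-generators, up to elements of $\ker s_\ast$.

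The central technical step is to construct an ad-witness realizing each relation-generator. Given $v \in E^0$, $k \in K_1(R_0)$, a distinguished edge $e_0 \in s^{-1}(v)$, and a representative $w \in \phi(e_0 e_0^\ast) R_0 \phi(e_0 e_0^\ast)^\times$ of $k$, set
\[
u^\# := 1 - \phi(e_0 e_0^\ast) + w \in R_0^\times.
\]
Since $\phi(e_0 e_0^\ast)$ is orthogonal to $\phi(ee^\ast)$ for $e \neq e_0$ and coincides with it for $e = e_0$, a short computation shows that $u^\#$ commutes with every vertex idempotent $\phi(v)$ and every edge idempotent $\phi(ee^\ast)$. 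Combining this with the Leavitt relations one verifies that $\ad(u^\#) \circ \phi = \phi_{z^\#}$, where $z^\# \in (R_\phi)^\times$ is given by $z^\#_{e_0} = w$, $z^\#_e = \phi(ee^\ast) - \phi(ee_0(ee_0)^\ast) + \phi(e) w^{-1} \phi(e^\ast)$ for $e \in r^{-1}(v) \setminus \{e_0\}$, and $z^\#_e = \phi(ee^\ast)$ otherwise. By Lemma \ref{lem:action-s(e)=r(f)}, the class $[z^\#] \in (R_\phi)^\times_{\ab}$ maps under $U$ to exactly the chosen relation-generator; the loop case $r(e_0) = v$ is handled by a minor modification of this computation.

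For a general $z$ with $U([z]) = 0$, combine finitely many such witnesses by taking the product $u := \prod_i u_i^\#$. Since distinct idempotents $\phi(e_{0,i} e_{0,i}^\ast)$ are pairwise orthogonal, the factors commute and the product simplifies to $1 - \sum_i \phi(e_{0,i} e_{0,i}^\ast) + \sum_i w_i$ (aggregating repeated indices by multiplying the corresponding $w_i$ in the same corner). A further calculation yields $\ad(u) \circ \phi = \phi_{z^\sharp}$, and Lemma \ref{lem:action-s(e)=r(f)} together with the orthogonality of the idempotents $\phi(ee^\ast)$ shows that, for each edge $e$, the abelianization class $[z^\sharp_e]$ agrees with $[z_e]$. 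Hence $[z^\sharp] = [z]$ in $(R_\phi)^\times_{\ab}$, and Lemma \ref{lem:v~u=>phiv~phiu} gives $\phi_z \sim \phi_{z^\sharp} = \ad(u) \circ \phi$, whence $\phi \sim_{\ad} \phi_z$. The main obstacle will be the computation $\ad(u^\#) \circ \phi = \phi_{z^\#}$ with its loop-case variant, together with the combinatorial bookkeeping that establishes $[z^\sharp] = [z]$ after combining multiple witnesses.
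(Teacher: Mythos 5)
Your explicit witnesses are correct as far as they go --- indeed $u^{\#}=1-\phi(e_0e_0^\ast)+w$ is precisely the unit $\hat\nu$ that the paper associates to the element $\nu\in R_\phi^\times$ supported on the single edge $e_0$, and your $z^{\#}$ is $\nu\lambda(\nu)^{-1}$ for the endomorphism $\lambda(a)=\sum_{e\in E^1}\phi(e)a\phi(e^\ast)$ of $R_\phi$. The gap is in the last step. Matching $z$ against a product of witnesses only controls $s_\ast([z])\in K_1(R_0)^{E^0}$: the hypothesis $d(U([z]))=0$, together with injectivity of $d$, tells you that $s_\ast([z])$ lies in the image of $I-\sigma A_E$, and your witnesses are chosen so that $s_\ast([z^\sharp])=s_\ast([z])$. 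This does not yield $[z^\sharp]=[z]$ in $(R_\phi)^\times_{\ab}\cong K_1(R_0)^{E^1}$, because $s_\ast\colon K_1(R_0)^{E^1}\to K_1(R_0)^{E^0}$ has a nontrivial kernel as soon as some vertex emits two edges. Concretely, if $e\neq f$ satisfy $s(e)=s(f)$ and $z$ has $[z_e]=k\neq 0$, $[z_f]=-k$ and all other components trivial, then $s_\ast([z])=0$, so $U([z])=0$ with the empty decomposition into relation generators; your construction returns $u=1$ and $z^\sharp$ trivial, yet $[z]\neq[z^\sharp]$. The assertion that ``$[z^\sharp_e]$ agrees with $[z_e]$ for each edge $e$'' is therefore unjustified and in general false.

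What is missing is the reason why classes in $\ker(s_\ast)$ --- and, more generally, in the full preimage of $\im(I-\sigma A_E)$ --- are nonetheless realized by conjugations. The paper supplies this by working at the level of the edge set: setting $B_{e,f}=\delta_{r(e),s(f)}$, Lemma \ref{lem:action-s(e)=r(f)} identifies $1-\lambda$ on $K_1(R_\phi)\cong K_1(R_0)^{E^1}$ with $I-\sigma B$, and the map induced by $s_\ast$ from $\coker(I-\sigma B)\otimes_{\Z[\sigma]}KH_1^{\gr}(R)=\gBF^\vee(E_s)\otimes_{\Z[\sigma]}KH_1^{\gr}(R)$ to $\gBF^\vee(E)\otimes_{\Z[\sigma]}KH_1^{\gr}(R)$ is an isomorphism (inverse induced by $r^\ast$). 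Injectivity of this induced map is exactly the statement that any class in $K_1(R_0)^{E^1}$ killed by the composite down to $\gBF^\vee(E)\otimes_{\Z[\sigma]}KH_1^{\gr}(R)$ already lies in $\im(1-\lambda)$, i.e.\ equals $[\nu\lambda(\nu)^{-1}]$ for some $\nu$, whence $\phi_z\sim\phi_{\nu\lambda(\nu)^{-1}}=\ad(\hat\nu)\circ\phi$ by Lemma \ref{lem:v~u=>phiv~phiu}. Your single-edge witnesses do generate $\im(1-\lambda)$, so once you add this injectivity statement (or prove directly that $\ker(s_\ast)\subset\im(1-\lambda)$) your argument closes; without it, the proof is incomplete.
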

\begin{proof} Consider the  homomorphism
\[
\lambda \colon R_\phi \to R_\phi, \qquad a 
\mapsto \sum_{e \in E^1} \phi(e)a\phi(e^\ast) 
\]
As in the ungraded case, writing $B_{e,f} = 
\delta_{r(e), s(f)}$ and using Lemma \ref{lem:action-s(e)=r(f)}, one checks
that the following square is commutative
\[
\begin{tikzcd}
K_1(R_\phi) \arrow{d}{\sim} \arrow{r}{\lambda} & K_1(R_\phi) \arrow{d}{\sim}\\
K_1(R)^{E^1} \arrow{r}{\sigma B} & K_1(R)^{E^1} 
\end{tikzcd}
\]
Writing $E_s$ for the graph with adjacency matrix $B$, 
we get a commutative diagram
\[
\begin{tikzcd}
K_1(R_\phi) \arrow{d}{\sim} \arrow{r}{1-\lambda} & K_1(R_\phi) \arrow{d}{\sim} & \\
K_1(R)^{E^1} \arrow{d}{s_\ast}\arrow{r}{I-\sigma B} & \arrow{d}{s_\ast}K_1(R)^{E^1} \arrow{r} & \gBF^\vee(E_s) \otimes_\Z KH_1^{\gr}(R)\arrow{d}& \\
K_1(R)^{E^0} \arrow{r}{I-\sigma A_E} & K_1(R)^{E^0} \arrow{r} & \gBF^\vee(E) \otimes_\Z KH_1^{\gr}(R) \arrow{r} & \kkgr(L(E), R)
\end{tikzcd}
\]
The rightmost horizontal map is injective by Theorem \ref{thm:uct}. Further,
the map induced by $s_\ast$ at the level of dual Bowen-Franks modules is 
an isomorphism; its inverse is induced by $r^\ast$. 

Now, since $\partial(U([z])) = 0$, there exists $[\nu] \in K_1(S_\phi)$
such that $[\nu \lambda(\nu)^{-1}] = [z]$.
By Lemma \ref{lem:v~u=>phiv~phiu}, this
says that $\phi_z \sim \phi_{\nu \lambda(\nu)^{-1}}$. 
Finally, like in the ungraded case, one checks that $\phi_{\nu \lambda(\nu)^{-1}} =
\ad(v) \circ \phi$.
\end{proof}

\begin{thm} \label{thm:jeq-ad-m2}
Assume that $\ell$ is a field. Let $E$ and $F$ be two primitive graphs. Given two unital graded homomorphisms $f, g \colon L(E) \to L(F)$, the following statements are equivalent:
\begin{itemize}
\item[(i)] $j(f) = j(g)$;
\item[(ii)] $f \sim_{\ad} g$;
\item[(iii)] $f \sim_{M_2} g$.
\end{itemize}
\end{thm}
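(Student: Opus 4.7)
The implications (ii) $\Rightarrow$ (iii) and (iii) $\Rightarrow$ (i) will be formal: the first is exactly Lemma \ref{lem:ad->m2}, while for the second, an $M_2$-homotopy $\iota_1 \circ f \sim \iota_1 \circ g$ yields $j(\iota_1) j(f) = j(\iota_1) j(g)$ in $\kkgr$ by homotopy invariance of $j$, and matricial stability lets us cancel the isomorphism $j(\iota_1)$. The heart of the argument is thus the implication (i) $\Rightarrow$ (ii), whose strategy will be to conjugate $f$ to align its vertex and edge idempotents with those of $g$ and then apply Lemma \ref{lem:ad-lemma}.

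Assuming $j(f) = j(g)$, I would first extract the $K_0^{\gr}$-level consequence: applying the map $\ev$ of Theorem \ref{thm:uct} together with the fact that $\can$ is an isomorphism (as $\ell$ is a field) gives $K_0^{\gr}(f) = K_0^{\gr}(g)$ as pointed preordered $\Z[\sigma]$-module maps. Since $F$ is primitive, $L(F)$ is strongly graded, and Dade's theorem identifies $K_0^{\gr}(L(F))$ with $K_0(L(F)_0)$; this translates the above into $[f(v)] = [g(v)]$ in $K_0(L(F)_0)$ for every $v \in E^0$. Because $f(ee^\ast)$ and $f(r(e))$ are Murray--von Neumann equivalent in $L(F)_0$ (via $f(e)$), and similarly for $g$, we further conclude $[f(ee^\ast)] = [g(ee^\ast)]$ in $K_0(L(F)_0)$ for every $e \in E^1$.

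Next, I will produce a single unit $W \in L(F)_0^\times$ that simultaneously conjugates all edge idempotents. Since $E$ is essential regular, the CK2 relations ensure that both $\{f(ee^\ast)\}_{e \in E^1}$ and $\{g(ee^\ast)\}_{e \in E^1}$ are complete orthogonal families in $L(F)_0$ summing to $1$. Because $L(F)_0$ is ultramatricial, it has cancellation, so the termwise $K_0$-equalities promote to isomorphisms $f(ee^\ast) L(F)_0 \cong g(ee^\ast) L(F)_0$ of right modules; the direct sum of these isomorphisms is implemented by left multiplication with a unit $W \in L(F)_0^\times$ such that $W f(ee^\ast) W^{-1} = g(ee^\ast)$ for every $e$. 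After replacing $f$ with $\ad(W) \circ f$ --- which is ad-homotopic to the original $f$ via the unit $W$ and has the same $\kkgr$-class by Proposition \ref{prop:ad} --- we may assume $f(ee^\ast) = g(ee^\ast)$ for all $e$, and hence (summing over $s(e)=v$ and using CK2) $f(v) = g(v)$ for all $v$.

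With this alignment in place, I will set $z_e := g(e) f(e^\ast)$ for each $e \in E^1$; a direct check using the CK1 identities shows that $z_e$ is a unit of the corner $f(ee^\ast) L(F)_0 f(ee^\ast)$ with inverse $f(e) g(e^\ast)$, and that the modified map $f_z$ of \eqref{eq:phiz} satisfies $f_z(e) = g(e)$ and $f_z(e^\ast) = g(e^\ast)$, hence $f_z = g$ as algebra maps. Lemma \ref{lem:jf+u=jfu} then yields $j(g) = j(f_z) = j(f) + d(U([z]))$, forcing $d(U([z])) = 0$, and Lemma \ref{lem:ad-lemma} produces $f \sim_{\ad} f_z = g$ as required. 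The main obstacle I expect is the joint conjugation step: one has to exploit both the primitivity of $E$ (to invoke CK2 and thus obtain complete families of edge idempotents) and the primitivity of $F$ (to guarantee that $L(F)_0$ is ultramatricial with cancellation, so that equalities in $K_0$ lift to genuine module isomorphisms and thence to a unitary conjugation).
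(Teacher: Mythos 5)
Your proposal is correct and follows essentially the same route as the paper: (ii)$\Rightarrow$(iii) via Lemma \ref{lem:ad->m2}, (iii)$\Rightarrow$(i) by stability and homotopy invariance of $j$, and (i)$\Rightarrow$(ii) by first conjugating $f$ by a degree-zero unit so that it agrees with $g$ on the idempotents $ee^\ast$ and $v$, then setting $z_e = g(e)f(e^\ast)$ and invoking Lemmas \ref{lem:jf+u=jfu} and \ref{lem:ad-lemma}. The only difference is that for the alignment step the paper simply cites \cite{lift}*{Corollary 3.5}, whereas you reprove it directly from $K_0^{\gr}(f)=K_0^{\gr}(g)$ using Dade's theorem and cancellation in the ultramatricial algebra $L(F)_0$; this works (note only that the Murray--von Neumann equivalence $f(ee^\ast)\sim f(r(e))$ is implemented by the degree-one element $f(e)$, so it is cleaner to get $[f(ee^\ast)]=[g(ee^\ast)]$ by evaluating $K_0^{\gr}(f)=K_0^{\gr}(g)$ on $[ee^\ast L(E)]$ directly, and that $L(F)_0$ is ultramatricial for any finite graph, not just primitive ones).
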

\begin{proof} The fact that (ii) implies (iii) is the content of Lemma \ref{lem:ad->m2}. Since $j$ is matricially stable and homotopy invariant, 
it follows that (iii) implies (i); hence, it remains to show that (i)
implies (ii). Suppose that $j(f) = j(g)$. In particular $K_0^{\gr}(f) = \ev \circ j(f)$ agrees with $K_0^{\gr}(g)$ and therefore, by \cite{lift}*{Corollary 3.5}, 
there exists a homogeneous unit of degree zero $u \in R$
such that $\ad(u) \circ f$ and $g$ agree on $D(E)_1 = \mathrm{span}_\ell\{ee^\ast : e \in E^1\} = \mathrm{span}_\ell\{ee^\ast, v : e \in E^1, v \in E^0\}$. Since $\ad(u) \circ f \sim_{\ad} f$, we may
without loss of generality assume that $f$ and $g$ agree on $D_1(E)$.

Now, if we put $z_e = g(e)f(e^\ast)$,
for each $e \in E^1$, these are units in each corner $f(ee^\ast) L(F)_0 f(ee^\ast)$ with inverse $f(e)g(e^\ast)$. Using the notation of Lemma \ref{lem:jf+u=jfu}, it follows that $g = f_z$ and $d(U([z])) = j(f_z)-j(f) = j(g)-j(f) = 0$. Thus, we can apply Lemma \ref{lem:ad-lemma} to obtain that $f \sim_{\ad} g$.
\end{proof}

\begin{coro} \label{coro:M2-htpy=hom}
Assume that $\ell$ is a
field. If $E$ and $F$ are two primitive graphs, 
then the map $\overline j \colon [L(E), L(F)]_{1,M_2} \to \kkgr(L(E), L(F))_1$ is bijective.
\qed
\end{coro}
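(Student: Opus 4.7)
The statement is essentially a packaging of two prior results, so my plan is to organize the proof as a short two-step argument.

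First I would address surjectivity. Corollary \ref{coro:surjectivity} already tells us that the canonical map $\overline j \colon [L(E), L(F)]_1 \to \kkgr(L(E),L(F))_1$ is surjective. Since polynomial homotopy of unital maps implies $M_2$-homotopy (compose with $\iota_1$), the quotient $[L(E),L(F)]_1 \twoheadrightarrow [L(E),L(F)]_{1,M_2}$ is surjective, and the map under study factors through it. Hence surjectivity is immediate.

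For injectivity, suppose $f,g \colon L(E) \to L(F)$ are graded unital algebra maps with $\overline j([f]) = \overline j([g])$, i.e. $j(f) = j(g)$ in $\kkgr(L(E),L(F))$. The equivalence (i)$\Leftrightarrow$(iii) of Theorem \ref{thm:jeq-ad-m2} then yields $f \sim_{M_2} g$, so $[f] = [g]$ in $[L(E),L(F)]_{1,M_2}$. The image of $\overline{j}$ lies in $\kkgr(L(E),L(F))_1$ because any unital graded homomorphism $f \colon L(E) \to L(F)$ induces a pointed preordered module map $K_0^{\gr}(f)$, which by definition of $\ev$ coincides with $\ev(\overline{j}([f]))$.

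There is no real obstacle here; the substance of the corollary is already absorbed by Corollary \ref{coro:surjectivity} and Theorem \ref{thm:jeq-ad-m2}, and the only thing to verify is the trivial compatibility between the two notions of homotopy and the well-definedness on the $M_2$-quotient (guaranteed by the matricial stability and homotopy invariance of $j$).
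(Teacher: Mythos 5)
Your proposal is correct and follows exactly the paper's own argument: surjectivity is quoted from Corollary \ref{coro:surjectivity} and injectivity from the equivalence of (i) and (iii) in Theorem \ref{thm:jeq-ad-m2}. The extra remarks on well-definedness on the $M_2$-quotient and on the image landing in $\kkgr(L(E),L(F))_1$ are harmless elaborations of what the paper leaves implicit.
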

\begin{proof} Surjectivity was proven in Corollary \ref{coro:surjectivity}; injectivity 
follows from Theorem \ref{thm:jeq-ad-m2}.
\end{proof}

\section{Homotopy classification} \label{sec:clasi}

We conclude with a homotopy classification 
theorem. To simplify its statement, we shall 
say that two algebras are \emph{unitally graded homotopy equivalent}
if there exists a unital homotopy equivalence between them 
whose homotopy inverse is also a unital homorphism.

\begin{thm} \label{thm:htpy-clasif}
Let $\ell$ be a field 
and $E$ and $F$ two primitive graphs. 
The following statements are equivalent:
\begin{itemize}
    \item[(i)] The pointed, preordered $\Z[\sigma]$-modules $(\gBF(E),\gBF(E)_+,1_E)$ 
    and $(\gBF(F), \gBF(E)_+, 1_F)$ are isomorphic.
    \item[(ii)] There exists an isomorphism $\xi \in \kkgr(L(E), L(F))_1$.
    \item[(iii)] The algebras $L(E)$ and $L(F)$ are graded unitally homotopy equivalent.
\end{itemize}
\end{thm}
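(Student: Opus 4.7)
The plan is to prove the cyclic chain (iii) $\Rightarrow$ (ii) $\Rightarrow$ (i) $\Rightarrow$ (ii) $\Rightarrow$ (iii). The implication (iii) $\Rightarrow$ (ii) is immediate: a unital graded homotopy equivalence $f \colon L(E) \to L(F)$ becomes an isomorphism in $\kkgr$ after applying $j$, and unitality ensures that $\ev(j(f)) = \can^{\ast} \circ K^{\gr}_0(f)$ is a pointed preordered $\Z[\sigma]$-module morphism, so $j(f) \in \kkgr(L(E),L(F))_1$. For (ii) $\Rightarrow$ (i), the functoriality of $\ev$ in Theorem \ref{thm:uct} turns any $\kkgr$-isomorphism into a $\Z[\sigma]$-module isomorphism between $\gBF(E)$ and $KH^{\gr}_0(L(F))$; composing with the inverse of the pointed preordered module isomorphism $\can \colon \gBF(F) \iso KH^{\gr}_0(L(F))$ (which is an isomorphism because $K_0(\ell) \cong \Z$) produces the desired isomorphism.

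For (i) $\Rightarrow$ (ii), I would first apply \cite{lift}*{Theorem 6.1} as in Lemma \ref{lem:jf=dx+xi} to lift $\phi$ and $\phi^{-1}$ to unital graded algebra homomorphisms $f \colon L(E) \to L(F)$ and $g \colon L(F) \to L(E)$ respectively. By the UCT, $\ev(j(gf)) = \id = \ev(j(\id_{L(E)}))$ forces $j(gf) - j(\id_{L(E)}) = d(x)$ for some obstruction $x \in \gBF^\vee(E) \otimes_{\Z[\sigma]} KH^{\gr}_1(L(E))$. I would then modify $g$ via Lemma \ref{lem:jf+u=jfu}: there exists $z \in (L(E)_g)^{\times}$ with $j(g_z) = j(g) + d(U([z]))$, and by surjectivity of $U$ combined with Poincaré duality (Theorem \ref{thm:poinc}) and the explicit unit description of $d$ given in Theorem \ref{thm:uct}(iv) and Corollary \ref{coro:uf}, I can choose $z$ so that $j(g_z \circ f) = j(\id_{L(E)})$. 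A symmetric adjustment on the other side then produces a two-sided $\kkgr$-inverse of $j(f)$, so $\xi := j(f)$ is the desired isomorphism in $\kkgr(L(E),L(F))_1$.

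For (ii) $\Rightarrow$ (iii), Corollary \ref{coro:surjectivity} lifts $\xi$ and $\xi^{-1}$ to unital graded homomorphisms $f \colon L(E) \to L(F)$ and $g \colon L(F) \to L(E)$. Since $j(gf) = \id_{L(E)} = j(\id_{L(E)})$, Theorem \ref{thm:jeq-ad-m2} yields a unit $u \in L(E)_0^{\times}$ with $\ad(u) \circ g \circ f \sim \id_{L(E)}$. Setting $g_1 := \ad(u) \circ g$, which remains unital and satisfies $j(g_1) = j(g) = \xi^{-1}$ by Proposition \ref{prop:ad}, one obtains $g_1 f \sim \id_{L(E)}$. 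Applying Theorem \ref{thm:jeq-ad-m2} to $fg_1$, for which $j(fg_1) = \id_{L(F)}$, gives $w \in L(F)_0^{\times}$ with $f_1 := \ad(w) \circ f$ satisfying $f_1 g_1 \sim \id_{L(F)}$. The remaining identity follows by concatenation:
\[
g_1 f_1 \sim (g_1 f_1)(g_1 f) = g_1 (f_1 g_1) f \sim g_1 f \sim \id_{L(E)},
\]
so $f_1$ and $g_1$ are mutually inverse unital graded homotopy equivalences.

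The main obstacle is the (i) $\Rightarrow$ (ii) step, where one must kill the secondary obstruction in $\gBF^\vee(E) \otimes_{\Z[\sigma]} KH^{\gr}_1(L(E))$. This hinges on Poincaré duality translating compositions of $\kkgr$-maps into products of units, together with the surjectivity statement of Lemma \ref{lem:jf+u=jfu} and the explicit formulas in Theorem \ref{thm:uct}(iv) and Corollary \ref{coro:uf}, without which the modification of $g$ to $g_z$ could not be made to eliminate the obstruction $d(x)$.
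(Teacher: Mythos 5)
Your architecture matches the paper's for the equivalence of (ii) and (iii): lifting $\xi$ and $\xi^{-1}$ to unital maps via Corollary \ref{coro:surjectivity}/\ref{coro:M2-htpy=hom}, invoking Theorem \ref{thm:jeq-ad-m2} to get $\ad(u)\circ g\circ f\sim \id$, and upgrading the one-sided inverses by the standard concatenation trick (the paper compresses this last step into ``this readily implies''; your spelled-out version, including the observation that $j(g_1)=j(g)$ by Proposition \ref{prop:ad}, is correct). Your (ii) $\Rightarrow$ (i) via functoriality of $\ev$ and the isomorphism $\can$ is also fine and simply reorganizes the cycle (the paper instead proves (iii) $\Rightarrow$ (i) directly via $K_0^{\gr}=KH_0^{\gr}$ and homotopy invariance).

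The divergence, and the one genuine gap, is in (i) $\Rightarrow$ (ii). The paper disposes of this implication in one line by citing \cite{arcor}*{Theorem 13.1}, which asserts precisely that isomorphisms at the level of $K_0^{\gr}$ lift through the UCT surjection to $\kkgr$-isomorphisms. You instead sketch a direct derivation: lift $\phi$ and $\phi^{-1}$ to $f$ and $g$, observe $j(gf)-j(\id)=d(x)$, and modify $g$ to $g_z$ so that $j(g_z\circ f)=j(\id)$. The crux is the claim that $d(U([z]))\circ j(f)$ sweeps out all of $\im(d)\subset\kkgr(L(E),L(E))$ as $z$ varies. This requires the compatibility of the maps $d$ with composition, i.e.\ that precomposition with $j(f)$ carries $\im(d)\subset\kkgr(L(F),L(E))$ onto $\im(d)\subset\kkgr(L(E),L(E))$ via the map $\gBF^\vee(F)\otimes_{\Z[\sigma]}KH_1^{\gr}(L(E))\to\gBF^\vee(E)\otimes_{\Z[\sigma]}KH_1^{\gr}(L(E))$ induced by a dual of $f$, together with the fact that this dual is an isomorphism because $\phi$ is. Theorem \ref{thm:uct} as stated is natural only in the second variable $R$; the bifunctoriality you need is exactly the technical content of the cited \cite{arcor}*{Theorem 13.1} and is not established anywhere in the present paper, nor by the ingredients you name (Poincar\'e duality, Theorem \ref{thm:uct}(iv), Corollary \ref{coro:uf}) without a further computation of how composition translates into products of units under duality. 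So your strategy is the right one, but this step is asserted rather than proved; either supply that naturality argument or cite \cite{arcor}*{Theorem 13.1} as the paper does.
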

\begin{proof} In \cite{arcor}*{Theorem 13.1} it was proved that, in the surjection of Theorem \ref{thm:uct}, one can lift isomorphisms at the level of $K_0^{\gr}$ to $\kkgr$-isomorphisms. Since by definition a lifting of a pointed
preordered module map lies in $\kkgr(L(E), L(F))_1$; this proves implication $(i) \Rightarrow (ii)$. 

Next assume $(ii)$ and consider the inverse
of $\xi$, noting that $\xi^{-1} \in \kkgr(L(F), L(E))_1$. By 
Corollary \ref{coro:M2-htpy=hom}, there exist unital algebra homomorphisms 
$f \colon L(E) \to L(F)$ and $g \colon L(F) \to L(E)$
such that $j(f) = \xi$ and $j(g) = \xi^{-1}$. In particular 
$j(fg) = \id_{L(F)}$ and $j(gf) = \id_{L(E)}$, and thus 
Theorem \ref{thm:jeq-ad-m2} says that there exist 
units $u \in L(F)_0$, $v \in L(E)_0$ such that $fg \sim \ad(u)$ and 
$gf \sim \ad(v)$. This readily implies that $f$ is a homotopy equivalence. We have thus proved that $(ii)$ implies~ $(iii)$.

Finally, we prove that $(iii)$ implies $(i)$. Let $f \colon L(E) \to L(F)$ be a unital homotopy equivalence. The map $K_0^{\gr}(f)$ is a pointed, preordered module map between the Bowen-Franks modules of $E$ and $F$, it suffices to see that it is an isomorphism. To prove this, we note that
$K_0^{\gr}$ agrees with $KH_0^{\gr}$ for $L(E)$ and $L(F)$ as per Remark \ref{rmk:htpy-khgr} and that $KH_0^{\gr}$ maps homotopy equivalences to isomorphisms. This concludes the proof.
\end{proof}

By Theorem \ref{thm:htpy-clasif}, the primitive case of Conjecture \ref{conj-hazrat} is equivalent to the following:

\begin{conj} Let $\ell$ be a field. If $E$ and $F$ are primitive graphs, then $L_\ell(E)$
and $L_\ell(F)$ are graded isomorphic if and only if they are unitally graded homotopy equivalent.
\end{conj}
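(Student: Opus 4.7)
The forward implication is immediate: any graded isomorphism $\varphi \colon L_\ell(E) \to L_\ell(F)$ is unital with unital graded inverse, and the equalities $\varphi \varphi^{-1} = \id_{L(F)}$ and $\varphi^{-1} \varphi = \id_{L(E)}$ in particular express these maps as constant homotopies. The substance of the conjecture lies in the reverse implication, which by Theorem~\ref{thm:htpy-clasif} is logically equivalent to the primitive case of Hazrat's Conjecture~\ref{conj-hazrat}: a pointed preordered $\Z[\sigma]$-module isomorphism $K_0^{\gr}(L(E)) \iso K_0^{\gr}(L(F))$ must be realised by a graded algebra isomorphism.

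The plan is to try to upgrade the $\kkgr$-equivalence produced by the conjunction of Corollary~\ref{coro:surjectivity} and the lifting results \cites{lift,vas} into a strict graded isomorphism. Concretely, I would first lift the assumed pointed preordered module isomorphism and its inverse to graded unital homomorphisms $f \colon L(E) \to L(F)$ and $g \colon L(F) \to L(E)$ whose $\kkgr$-classes are mutually inverse; by Theorem~\ref{thm:jeq-ad-m2} the compositions $gf$ and $fg$ are then $\ad$-homotopic to $\id_{L(E)}$ and $\id_{L(F)}$ respectively. The goal is then to deform $f$ and $g$, using the freedom provided by the modifications $\phi \rightsquigarrow \phi_z$ of \eqref{eq:phiz} and by inner automorphisms $\ad(u)$ with $u$ a degree-zero homogeneous unit, so that one of the two compositions becomes the identity on the nose rather than merely $\ad$-homotopic to it.

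A complementary route, and probably the more promising one, exploits primitivity via the corner skew Laurent polynomial structure on $L(E)$ and $L(F)$. By Dade's theorem a graded isomorphism $L(E) \iso L(F)$ amounts to an isomorphism $\psi \colon L(E)_0 \iso L(F)_0$ of unital ultramatricial algebras which intertwines, in the appropriate sense, the corner endomorphisms $\alpha_E(x) = t_+^E x t_-^E$ and $\alpha_F$, together with a compatible choice of partial isometries $t_\pm$. The Bratteli--Elliott classification of ultramatricial algebras produces an abstract $\psi$ that induces the given isomorphism on $K_0$ (and hence, by Corollary~\ref{coro:d-shift-d=alpha}, respects the $\sigma$-action on $K_1$); the task is then to modify $\psi$ by inner automorphisms and to choose the partial isometries so that the intertwining relation $\psi \circ \alpha_E = \alpha_F \circ \psi$ holds strictly, not only up to $K_0$.

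The principal obstacle is precisely this last step: passing from an isomorphism of the zero-component algebras that is equivariant on $K_0$ to one that is strictly equivariant is the algebraic analogue of upgrading a shift equivalence to a strong shift equivalence between irreducible shifts of finite type. In symbolic dynamics the corresponding Williams conjecture was refuted by Kim--Roush, so any successful approach to the present conjecture must genuinely exploit the algebraic input beyond what the $K$-theoretic shadow sees. I expect progress to require either a new deformation technique, using the full strength of $\ad$-homotopy together with the $\kkgr$-level data of Theorem~\ref{thm:uct} to cancel obstructions in the kernel of $\ev$, or a field-sensitive refinement of the lifting results of \cite{lift} that produces isomorphisms rather than arbitrary liftings when the $K_0^{\gr}$-data is itself invertible.
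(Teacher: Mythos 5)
The statement you were given is a \emph{conjecture}, and the paper offers no proof of it: the only content the paper attaches to it is the remark, immediately preceding it, that by Theorem \ref{thm:htpy-clasif} it is equivalent to the primitive case of Conjecture \ref{conj-hazrat}. Your proposal is accurate on precisely the points that can actually be established --- the forward direction is trivial since a graded isomorphism is automatically unital and its composites with its inverse are literally the identity, and the reverse direction is, via Theorem \ref{thm:htpy-clasif}, logically equivalent to Hazrat's conjecture for primitive graphs --- and this matches the paper's own (non-)treatment exactly. You are also right not to claim a proof of the hard direction: both routes you sketch (forcing one composite $gf$ or $fg$ to be the identity on the nose by deforming along \eqref{eq:phiz} and inner automorphisms, or strictifying the intertwining of the corner endomorphisms $\alpha_E$ and $\alpha_F$ on the ultramatricial zero components) run into the same obstruction, namely upgrading a $K$-theoretically equivariant equivalence to a strictly equivariant one, and your analogy with shift equivalence versus strong shift equivalence and the Kim--Roush refutation of the Williams conjecture is the standard and well-founded caution here. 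There is no gap to report because there is no proof to compare against: the paper leaves the statement open, and so, candidly, do you.
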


\begin{bibdiv} 
\begin{biblist}

\bib{flowinv}{article}{
   author={Abrams, Gene},
   author={Louly, Adel},
   author={Pardo, Enrique},
   author={Smith, Christopher},
   title={Flow invariants in the classification of Leavitt path algebras},
   journal={J. Algebra},
   volume={333},
   date={2011},
   pages={202--231},
   issn={0021-8693},
   doi={10.1016/j.jalgebra.2011.01.022},
}

\bib{gramor}{article}{
   author={Abrams, Gene},
   author={Ruiz, Efren},
   author={Tomforde, Mark},
   title={Morita equivalence for graded rings},
   journal={J. Algebra},
   volume={617},
   date={2023},
   pages={79--112},
   issn={0021-8693},
   doi={10.1016/j.jalgebra.2022.10.036},
}

\bib{lpabook}{book}{
author={Abrams, Gene},
author={Ara, Pere},
author={Siles Molina, Mercedes},
title={Leavitt path algebras}, 
date={2017},
series={Lecture Notes in Math.},
volume={2008},
publisher={Springer},
doi={$10.1007/978-1-4471-7344-1$},
}

\bib{towards}{article}{
  author     = {Ara, P. and Pardo, E.},
  title      = {Towards a {$K$}-theoretic characterization of graded isomorphisms between {L}eavitt path algebras},
  journal    = {J. K-Theory},
  year       = {2014},
  volume     = {14},
  number     = {2},
  pages      = {203--245},
  issn       = {1865-2433},
  doi        = {10.1017/is014006003jkt269},
}

\bib{skew}{article}{
   author={Ara, P.},
   author={Gonz\'{a}lez-Barroso, M. A.},
   author={Goodearl, K. R.},
   author={Pardo, E.},
   title={Fractional skew monoid rings},
   journal={J. Algebra},
   volume={278},
   date={2004},
   number={1},
   pages={104--126},
   issn={0021-8693},
   doi={10.1016/j.jalgebra.2004.03.009},
}

\bib{amp}{article}{
    AUTHOR={Ara, P.},
    AUTHOR={Moreno, M. A.},
    AUTHOR={Pardo, E.},
     TITLE = {Nonstable {$K$}-theory for graph algebras},
   JOURNAL = {Algebr. Represent. Theory},
    VOLUME = {10},
      YEAR = {2007},
    NUMBER = {2},
     PAGES = {157--178},
      ISSN = {1386-923X},
       DOI = {10.1007/s10468-006-9044-z},
       URL = {https://doi.org/10.1007/s10468-006-9044-z},

}

\bib{lift}{article}{
  author  = {Arnone, Guido},
  title   = {Lifting morphisms between graded Grothendieck groups of Leavitt path algebras},
  journal = {Journal of Algebra},
  year    = {2023},
  volume = {631},
  pages = {804--829},
  doi     = {10.1016/j.jalgebra.2023.05.018},
}

\bib{tesigui}{thesis}{
author={Arnone, Guido},
title={\'Algebras de Leavitt y $K$-teor\'\i a bivariante hermitiana graduada},
type={Diploma Thesis},
address={Buenos Aires},
date={2021},
eprint={http://cms.dm.uba.ar/academico/carreras/licenciatura/tesis/2021/Arnone.pdf},
}

\bib{arcor}{article}{
  author  = {Arnone, Guido},
  author = {Cortiñas, Guillermo},
  title   = {Graded {$K$}-theory and Leavitt path algebras},
  journal = {Journal of Algebraic Combinatorics},
  year    = {2022},
  doi     = {10.1007/s10801-022-01184-5},
}

\bib{friendly}{article}{
   author={Cortiñas, Guillermo},
   title={Algebraic v. topological $K$-theory: a friendly match},
   conference={
      title={Topics in algebraic and topological $K$-theory},
   },
   book={
      series={Lecture Notes in Math.},
      volume={2008},
      publisher={Springer, Berlin},
   },
   date={2011},
   pages={103--165},
}

\bib{kkhlpa}{article}{
   author={Cortiñas, Guillermo},
   title={Classifying Leavitt path algebras up to 
   involution preserving homotopy},
   journal={Math. Ann.},
   date={2022},
   doi={10.1007/s00208-022-02436-2}
}

\bib{kklpa1}{article}{
   author={Cortiñas, Guillermo},
   author={Montero, Diego},
   title={Algebraic bivariant $K$-theory and Leavitt path algebras},
   journal={J. Noncommut. Geom.},
   volume={15},
   date={2021},
   number={1},
   pages={113--146},
   issn={1661-6952},
   review={\MR{4248209}},
   doi={10.4171/jncg/397},
}

\bib{kklpa2}{article}{
   author={Corti\~{n}as, Guillermo},
   author={Montero, Diego},
   title={Homotopy classification of Leavitt path algebras},
   journal={Adv. Math.},
   volume={362},
   date={2020},
   pages={106961, 26},
   issn={0001-8708},
   doi={10.1016/j.aim.2019.106961},
}

\bib{kk}{article}{
   author={Cortiñas, Guillermo},
   author={Thom, Andreas},
   title={Bivariant algebraic $K$-theory},
   journal={J. Reine Angew. Math.},
   volume={610},
   date={2007},
   pages={71--123},
   issn={0075-4102},
}

\bib{kkh}{article}{
   author={Corti\~{n}as, Guillermo},
   author={Vega, Santiago},
   title={Bivariant hermitian K-theory and Karoubi's fundamental theorem},
   journal={J. Pure Appl. Algebra},
   volume={226},
   date={2022},
   number={12},
   pages={Paper No. 107124, 32},
   issn={0022-4049},
   review={\MR{4425233}},
   doi={10.1016/j.jpaa.2022.107124},
}

\bib{dade}{article}{
   author={Dade, Everett C.},
   title={Group-graded rings and modules},
   journal={Math. Z.},
   volume={174},
   date={1980},
   number={3},
   pages={241--262},
   issn={0025-5874},
   review={\MR{593823}},
   doi={10.1007/BF01161413},
}

\bib{kkg}{article}{
   author={Ellis, Eugenia},
   title={Equivariant algebraic $kk$-theory and adjointness theorems},
   journal={J. Algebra},
   volume={398},
   date={2014},
   pages={200--226},
   issn={0021-8693},
   review={\MR{3123759}},
}

\bib{gradedstr}{article}{
   author={Hazrat, Roozbeh},
   title={The graded structure of Leavitt path algebras},
   journal={Israel J. Math.},
   volume={195},
   date={2013},
   number={2},
   pages={833--895},
   issn={0021-2172},
   doi={10.1007/s11856-012-0138-5},
}

\bib{hazrat}{article}{
   author={Hazrat, Roozbeh},
   title={The graded Grothendieck group and the classification of Leavitt
   path algebras},
   journal={Math. Ann.},
   volume={355},
   date={2013},
   number={1},
   pages={273--325},
   issn={0025-5831},
   review={\MR{3004584}},
   doi={10.1007/s00208-012-0791-3},
}

\bib{hazbook}{book}{
   author={Hazrat, Roozbeh},
   title={Graded rings and graded Grothendieck groups},
   series={London Mathematical Society Lecture Note Series},
   volume={435},
   publisher={Cambridge University Press, Cambridge},
   date={2016},
   pages={vii+235},
   isbn={978-1-316-61958-2},
   review={\MR{3523984}},
}

\bib{haz-vnreg}{article}{
   author={Hazrat, R.},
   title={Leavitt path algebras are graded von Neumann regular rings},
   journal={J. Algebra},
   volume={401},
   date={2014},
   pages={220--233},
   issn={0021-8693},
   doi={10.1016/j.jalgebra.2013.10.037},
}

\bib{coding}{book}{
   author={Lind, Douglas},
   author={Marcus, Brian},
   title={An introduction to symbolic dynamics and coding},
   publisher={Cambridge University Press, Cambridge},
   date={1995},
   pages={xvi+495},
   isbn={0-521-55124-2},
   isbn={0-521-55900-6},
   doi={10.1017/CBO9780511626302},
}

\bib{context}{book}{
   author={Riehl, Emily},
   title={Category Theory in Context},
   year={2016},
   publisher={Dover Publications Inc.},
   date={2016-12-30},
   pages={272},
   isbn={048680903X},
   url={math.jhu.edu/~eriehl/context.pdf}
}

\bib{conmlpa}{article}{
  author   = {Tomforde, Mark},
  title    = {Leavitt path algebras with coefficients in a commutative ring},
  journal  = {J. Pure Appl. Algebra},
  year     = {2011},
  volume   = {215},
  number   = {4},
  pages    = {471--484},
  issn     = {0022-4049},
  doi      = {10.1016/j.jpaa.2010.04.031},
}

\bib{wagoner}{article}{
   author={Wagoner, J. B.},
   title={Delooping classifying spaces in algebraic $K$-theory},
   journal={Topology},
   volume={11},
   date={1972},
   pages={349--370},
   issn={0040-9383},
   doi={10.1016/0040-9383(72)90031-6},
}

\bib{vas}{article}{
  author      = {Va\v{s}, Lia},
  title       = {The functor {$K_0^{\operatorname{gr}}$} is full and only weakly faithful},
  journal={Algebras and Representation Theory},
  doi={10.1007/s10468-023-10199-w},
  year={2023},
}

\bib{kh}{article}{
   author={Weibel, Charles A.},
   title={Homotopy algebraic $K$-theory},
   conference={
      title={Algebraic $K$-theory and algebraic number theory},
      address={Honolulu, HI},
      date={1987},
   },
   book={
      series={Contemp. Math.},
      volume={83},
      publisher={Amer. Math. Soc., Providence, RI},
   },
   isbn={0-8218-5090-3},
   date={1989},
   pages={461--488},
   doi={10.1090/conm/083/991991},
}

\end{biblist}
\end{bibdiv} 
\end{document}